\DeclareMathOperator{\kneser}{\mathrm{KN}}
\DeclareMathOperator{\tkop}{\mathrm{TK}}
\DeclareMathOperator{\tkfop}{\mathcal{TK}}
\DeclareMathOperator{\ex}{\text{ex}}
\DeclareMathOperator{\vc}{\text{dim}}
\newcommand{\tk}[2]{\tkop^{#2}(#1)}
\newcommand{\tkf}[2]{\tkfop^{#2}(#1)}
\newcommand{\C}[2]{$C_{#1}^{#2}$}
\newcommand{\F}{\mathcal{F}}
\title{On the Chromatic Thresholds of Hypergraphs}
\author{
 J\'{o}zsef Balogh \footnote{
    University of Illinois, Urbana-Champaign, 
    jobal@math.uiuc.edu. 
    This material is based on work partly done at University of California, San Diego,  
    and at SZTE, Bolyai Institute, Szeged, Hungary. 
    Research supported by NSF CAREER Grant DMS-0745185,
    UIUC Campus Research Board Grant 11067, OTKA Grant K76099, and the Arnold O. Beckman Research Award (UIUC Campus Research Board 13039) grant, 
     Also supported by the European Union and co-funded by the European Social
 Fund  under the project ``Telemedicine-focused research activities on
 the field of Mathematics,  Informatics and Medical sciences'' of project
 number     ``T\'{A}MOP-4.2.2.A-11/1/KONV-2012-0073''.}
 \and Jane Butterfield \footnote{University of Minnesota, Minneapolis. butter@umn.edu.  Research partly supported  by the Dr. Lois M. Lackner Mathematics Fellowship and NSF grant DMS 08-38434, ``EMSW21-MCTP: Research Experience for Graduate Students''.}
 \and Ping Hu \footnote{University of Illinois, Urbana-Champaign, pinghu1@math.uiuc.edu.}
 \and John Lenz \footnote{University of Illinois at Chicago. lenz@math.uic.edu. Research partly supported by NSA Grant H98230-13-1-0224.}
 \and Dhruv Mubayi \footnote{Department of Mathematics, Statistics, and Computer Science, University of Illinois, Chicago IL 60607, email:
 mubayi@math.uic.edu. Research supported in part by  NSF Grant 0969092.}
}
\date{\today}
\begin{document}

\maketitle

\begin{abstract}

Let $\mathcal{F}$ be a family of $r$-uniform hypergraphs. The \emph{chromatic threshold}
of $\mathcal{F}$ is the infimum of all non-negative reals $c$ such that the subfamily of
$\mathcal{F}$ comprising  hypergraphs $H$ with minimum degree
at least $c \binom{\left| V(H) \right|}{r-1}$ has bounded chromatic number.  This parameter has a long history for graphs ($r=2$), and in this paper we begin its systematic study for hypergraphs.

\L uczak and Thomass\'{e} recently proved that the chromatic threshold of the so-called near bipartite graphs is zero, and our main contribution is to generalize this result to $r$-uniform hypergraphs. For this class of hypergraphs, we also show that the exact Tur\'an number is achieved uniquely by the complete $(r+1)$-partite hypergraph with nearly equal part sizes.  This is one of very few infinite families of nondegenerate hypergraphs whose Tur\'an number is determined exactly.  In an attempt to generalize Thomassen's result that the chromatic threshold of triangle-free graphs is $1/3$, we prove bounds for the chromatic threshold of the family of 3-uniform hypergraphs not containing $\{abc, abd, cde\}$, the so-called generalized triangle.

 In order to prove upper bounds we introduce the concept of \emph{fiber bundles}, which can be thought
of as a hypergraph analogue of directed graphs.  This leads to the notion of \emph{fiber bundle dimension}, a structural property of fiber bundles that is
based on the idea of Vapnik-Chervonenkis dimension in hypergraphs.  Our lower bounds follow from explicit
constructions, many  of which use a hypergraph analogue of the Kneser graph. Using methods from extremal set theory, we prove that these Kneser hypergraphs have unbounded chromatic number.  This generalizes a result of Szemer\'edi for graphs and might be of independent
interest.  Many open problems remain.

\medskip
Keywords: hypergraphs, chromatic threshold, exact Tur\'{a}n number, VC-dimension
\end{abstract}

\def\drawkqq{
\begin{tikzpicture}[scale=0.5]
  \foreach \x in {0,1,2}{
    \draw[fill] (\x,0) circle (2pt);
    \draw[fill] (\x,2) circle (2pt);
    \foreach \y in {0,1,2}{
      \draw[solid] (\x,0) -- (\y,2);
    }
  }
\end{tikzpicture}
}

\tikzstyle{vertex}=[circle,fill=black,inner sep=2pt]
\tikzstyle{kqq}=[circle,draw=black,densely dashed]

\def\largedimfigure{
\begin{tikzpicture}
   \node (k1) at (0,0) [kqq] {\drawkqq};
   \node (k2) at (2.5,0) [kqq] {\drawkqq};
   \node (k3) at (7,0) [kqq] {\drawkqq};
   \draw (4.8,0) node {\dots};

   \begin{scope}[xshift=-1cm,yshift=-1.4in]

   \begin{scope}
     \node (v1) at (-0.5,0) [vertex] {};
     \node (v2) at (0,0) [vertex] {};
     \node (v3) at (0.5,0) [vertex] {};
     \draw (0,0) ellipse (0.9 and 0.3);
     \draw (0,-0.6) node {$E_1$};
   \end{scope}

   \begin{scope}[xshift=3cm]
     \node (v4) at (-0.5,0) [vertex] {};
     \node (v5) at (0,0) [vertex] {};
     \node (v6) at (0.5,0) [vertex] {};
     \draw (0,0) ellipse (0.9 and 0.3);
     \draw (0,-0.6) node {$E_2$};
   \end{scope}

   \begin{scope}[xshift=6cm]
     \node (v7) at (-0.5,0) [vertex] {};
     \node (v8) at (0,0) [vertex] {};
     \node (v9) at (0.5,0) [vertex] {};
     \draw (0,0) ellipse (0.9 and 0.3);
     \draw (0,-0.6) node {$E_3$};
   \end{scope}

   \draw (7.5,0) node {\dots};

   \begin{scope}[xshift=9cm]
     \node (v10) at (-0.5,0) [vertex] {};
     \node (v11) at (0,0) [vertex] {};
     \node (v12) at (0.5,0) [vertex] {};
     \draw (0,0) ellipse (0.9 and 0.3);
     \draw (0,-0.6) node {$E_d$};
   \end{scope}

   \end{scope}

   \draw[densely dashed] (k1) to [out=240,in=100] (v1);
   \draw[densely dashed] (k1) to [out=270,in=110] (v4);
   \draw[densely dashed] (k1) to [out=290,in=120] (v7);
   \draw[densely dashed] (k1) to [out=320,in=120] (v10);

   \draw[densely dashed] (k2) to [out=250,in=100] (v2);
   \draw[densely dashed] (k2) to [out=280,in=110] (v4);
   \draw[densely dashed] (k2) to [out=290,in=120] (v8);
   \draw[densely dashed] (k2) to [out=330,in=120] (v11);

   \draw[densely dashed] (k3) to [out=220,in=40] (v3);
   \draw[densely dashed] (k3) to [out=250,in=40] (v6);
   \draw[densely dashed] (k3) to [out=270,in=80] (v8);
   \draw[densely dashed] (k3) to [out=290,in=90] (v12);

\end{tikzpicture}
}

\tikzstyle{vertex}=[circle,fill=black,inner sep=2pt]
\tikzstyle{uvertex}=[circle,fill=gray!30,inner sep=2pt]

\def\cyclefivefigure{
\begin{tikzpicture}
   [
      rotate=90
   ]

   \filldraw[very thick, fill=gray!20 ] ellipse (1.4 and 0.3);  
   \node at (-1,0) [vertex] {};
   \node at (0,0) [vertex] {};
   \node at (1,0) [vertex] {};

   \node at (-1,1) [vertex] {};
   \node at (-1,2) [vertex] {};
   \draw (-1,1) ellipse (0.3 and 1.4);

   \node at (1,1) [vertex] {};
   \node at (1,2) [vertex] {};
   \draw (1,1) ellipse (0.3 and 1.4);

   \node at (0,2) [vertex] {};

   \draw[rounded corners=5pt] (-1,0.8) -- (-0.8,0.8) -- (0.2,1.8) -- (0.2,2.2) -- (-1.2,2.2) -- (-1.2,0.8) -- (-1,0.8);
   \draw[rounded corners=5pt] (1,0.8) -- (0.8,0.8) -- (-0.2,1.8) -- (-0.2,2.2) -- (1.2,2.2) -- (1.2,0.8) -- (1,0.8);
\end{tikzpicture}
}

\def\cycleeightfigure{
\begin{tikzpicture}
   [
     rotate=90
   ]

   \filldraw[very thick, fill=gray!20 , rounded corners=5pt] (-1,-0.8) -- (-0.8,-0.8) -- (0.2,-1.8) -- (0.2,-2.2) -- (-1.2,-2.2) -- (-1.2,-0.8) -- (-1,-0.8); 
   
   \node at (0,-2) [vertex] {};

   \node at (-1,-2) [vertex] {};
   \node at (-1,-1) [vertex] {};
   \node at (-1,0) [vertex] {};
   \draw (-1,-1) ellipse (0.3 and 1.4);

   \node at (1,-2) [vertex] {};
   \node at (1,-1) [vertex] {};
   \node at (1,0) [vertex] {};
   \draw (1,-1) ellipse (0.3 and 1.4);

   \node at (-1,1) [vertex] {};
   \node at (-1,2) [vertex] {};
   \draw (-1,1) ellipse (0.3 and 1.4);

   \node at (1,1) [vertex] {};
   \node at (1,2) [vertex] {};
   \draw (1,1) ellipse (0.3 and 1.4);

   \node at (0,2) [vertex] {};

   \draw[rounded corners=5pt] (-1,0.8) -- (-0.8,0.8) -- (0.2,1.8) -- (0.2,2.2) -- (-1.2,2.2) -- (-1.2,0.8) -- (-1,0.8);
   \draw[rounded corners=5pt] (1,0.8) -- (0.8,0.8) -- (-0.2,1.8) -- (-0.2,2.2) -- (1.2,2.2) -- (1.2,0.8) -- (1,0.8);

   \draw[rounded corners=5pt] (1,-0.8) -- (0.8,-0.8) -- (-0.2,-1.8) -- (-0.2,-2.2) -- (1.2,-2.2) -- (1.2,-0.8) -- (1,-0.8);
\end{tikzpicture}
}

\def\cycleninefigure{
\begin{tikzpicture}
   [
      rotate=90
   ]
   \filldraw[very thick, fill=gray!20 ] (0,-3) ellipse (1.4 and 0.3);  
   
   \node at (-1,-3) [vertex] {};
   \node at (0,-3) [vertex] {};
   \node at (1,-3) [vertex] {};

   \node at (-1,-2) [vertex] {};
   \node at (-1,-1) [vertex] {};
   \node at (-1,0) [vertex] {};
   \draw (-1,-2) ellipse (0.3 and 1.4);
   \draw (-1,-1) ellipse (0.3 and 1.4);

   \node at (1,-2) [vertex] {};
   \node at (1,-1) [vertex] {};
   \node at (1,0) [vertex] {};
   \draw (1,-2) ellipse (0.3 and 1.4);
   \draw (1,-1) ellipse (0.3 and 1.4);

   \node at (-1,1) [vertex] {};
   \node at (-1,2) [vertex] {};
   \draw (-1,1) ellipse (0.3 and 1.4);

   \node at (1,1) [vertex] {};
   \node at (1,2) [vertex] {};
   \draw (1,1) ellipse (0.3 and 1.4);

   \node at (0,2) [vertex] {};

   \draw[rounded corners=5pt] (-1,0.8) -- (-0.8,0.8) -- (0.2,1.8) -- (0.2,2.2) -- (-1.2,2.2) -- (-1.2,0.8) -- (-1,0.8);
   \draw[rounded corners=5pt] (1,0.8) -- (0.8,0.8) -- (-0.2,1.8) -- (-0.2,2.2) -- (1.2,2.2) -- (1.2,0.8) -- (1,0.8);
\end{tikzpicture}
}

\def\cyclefoureightfigure{
\begin{tikzpicture}
   [
      rotate=90
   ]

   \draw[very thick, fill=gray!20, rounded corners=5pt] (-1,2.2) -- (-0.8,2.2) -- (0.2,0.2) -- (0.2,-0.2) -- (-1.2,-0.2) -- (-1.2,2.2) -- (-1,2.2); 
      
   \node at (0,0) [vertex] {};
   \node at (0,6) [vertex] {};

   \foreach \x in {0,1,2,3,4,5,6}{
      \node at (-1,\x) [vertex] {};
      \node at (1,\x) [vertex] {};
   }

   \draw (-1,1.5) ellipse (0.3 and 1.9);
   \draw (1,1.5) ellipse (0.3 and 1.9);

   \draw (-1,4.5) ellipse (0.3 and 1.9);
   \draw (1,4.5) ellipse (0.3 and 1.9);

   \draw[rounded corners=5pt] (-1,3.8) -- (-0.8,3.8) -- (0.2,5.8) -- (0.2,6.2) -- (-1.2,6.2) -- (-1.2,3.8) -- (-1,3.8);
   \draw[rounded corners=5pt] (1,3.8) -- (0.8,3.8) -- (-0.2,5.8) -- (-0.2,6.2) -- (1.2,6.2) -- (1.2,3.8) -- (1,3.8);

   \draw[rounded corners=5pt] (1,2.2) -- (0.8,2.2) -- (-0.2,0.2) -- (-0.2,-0.2) -- (1.2,-0.2) -- (1.2,2.2) -- (1,2.2);
\end{tikzpicture}
}

\def\cyclefourninefigure{
\begin{tikzpicture}
   [
     rotate=90
   ]

   \filldraw[very thick, fill=gray!20]  (0,0) ellipse (1.4 and 0.3);  

   \node at (-1,0) [vertex] {};
   \node at (-0.333,0) [vertex] {};
   \node at (0.333,0) [vertex] {};
   \node at (1,0) [vertex] {};
   \node at (0,7) [vertex] {};

   \foreach \x in {1,2,3,4,5,6,7}{
      \node at (-1,\x) [vertex] {};
      \node at (1,\x) [vertex] {};
   }

   \draw (-1,1.5) ellipse (0.3 and 1.9);
   \draw (1,1.5) ellipse (0.3 and 1.9);

   \draw (-1,2.5) ellipse (0.3 and 1.9);
   \draw (1,2.5) ellipse (0.3 and 1.9);

   \draw (-1,5.5) ellipse (0.3 and 1.9);
   \draw (1,5.5) ellipse (0.3 and 1.9);

   \draw[rounded corners=5pt] (-1,4.8) -- (-0.8,4.8) -- (0.2,6.8) -- (0.2,7.2) -- (-1.2,7.2) -- (-1.2,4.8) -- (-1,4.8);
   \draw[rounded corners=5pt] (1,4.8) -- (0.8,4.8) -- (-0.2,6.8) -- (-0.2,7.2) -- (1.2,7.2) -- (1.2,4.8) -- (1,4.8);
\end{tikzpicture}
}

\def\cyclefourninecolored{
\begin{tikzpicture}
   [
     rotate=90
   ]
   
   \node at (-1,0) [vertex] {};
   \node at (-0.333,0) [uvertex] {};
   \node at (0.333,0) [uvertex] {};
   \node at (1,0) [vertex] {};
   \node at (0,7) [vertex] {};

   \node at (0, -1) {$E_1$};
   \node at (-1,-0.7) {$v_1$};
   \node at (1,-0.7) {$v_r$};
   \node at (2,1.5) {$E_2$};
   \node at (2,3) {$E_3$};
   \node at (1.6,4) {$v_{2r}$};
   \node at (0,7.7) {$v_{3r}$};
   \node at (-1.6,4) {$v_{kr}$};

   \foreach \x in {1,2,3,5,6,7}{
      \node at (-1,\x) [uvertex] {};
      \node at (1,\x) [uvertex] {};
   }
   
   \node at (-1,4) [vertex]{};
   \node at (1,4) [vertex]{};
   
   \draw[very thick]  (0,0) ellipse (1.4 and 0.3);  

   \draw (-1,1.5) ellipse (0.3 and 1.9);
   \draw (1,1.5) ellipse (0.3 and 1.9);

   \draw (-1,2.5) ellipse (0.3 and 1.9);
   \draw (1,2.5) ellipse (0.3 and 1.9);

   \draw (-1,5.5) ellipse (0.3 and 1.9);
   \draw (1,5.5) ellipse (0.3 and 1.9);

   \draw[rounded corners=5pt] (-1,4.8) -- (-0.8,4.8) -- (0.2,6.8) -- (0.2,7.2) -- (-1.2,7.2) -- (-1.2,4.8) -- (-1,4.8);
   \draw[rounded corners=5pt] (1,4.8) -- (0.8,4.8) -- (-0.2,6.8) -- (-0.2,7.2) -- (1.2,7.2) -- (1.2,4.8) -- (1,4.8);
\end{tikzpicture}
}

\def\tfivefigure{
\begin{tikzpicture}
  
   \node at (0,0) [vertex] {};		
   \node at (0,1) [vertex] {};		
   \node at (0,-1) [vertex] {};		
   \draw (0,0) ellipse (.3 and 1.4); 
   
   \node at (1,0) [vertex] {}; 		
   \node at (2,0) [vertex] {};		
   \draw[rounded corners=5pt] ( -.2, .8) -- (-.2,1.2) -- (.2, 1.2) -- (2.2, .2) -- (2.2, -.2) -- (.8, -.2) -- cycle;						
   \draw (1,0) ellipse (1.4 and .3); 
   \draw[rounded corners=5pt] (-.2, -.8) -- (.8, .2) -- (2.2, .2) -- (2.2, -.2) -- (.2, -1.2) -- (-.2, -1.2) -- cycle;						
      
\end{tikzpicture}
}   

\def\ssevenfigure{
   \begin{tikzpicture}
	\begin{scope}[ultra thick, white!50!black]
    \draw ($({2/sqrt(3)},0)$) -- ($({-2/sqrt(3)},0)$) -- (0,2) -- cycle;
    \draw ($({-2/sqrt(3)},0)$) -- ($({1/sqrt(3)},1)$);
    \draw ($({2/sqrt(3)},0)$) --  ($({-1/sqrt(3)},1)$);
    \draw (0,0) -- (0,2); 
    \draw (0, 0) arc (-90:150:2/3);
    \end{scope}

	\node at (0,2) [vertex]{};
	\node at ($({2/sqrt(3)},0)$) [vertex]{};
	\node at ($({-2/sqrt(3)},0)$) [vertex]{};
	\node at ($({-1/sqrt(3)},1)$) [vertex]{};
	\node at ($({1/sqrt(3)},1)$) [vertex]{};
	\node at (0,0) [vertex]{};
	\node at (0,2/3) [vertex]{};

   \end{tikzpicture}
}

\def\tkfourfigure{
   \begin{tikzpicture}
	\begin{scope}[ultra thick, white!50!black]
    \draw ($({2/sqrt(3)},0)$) -- ($({-2/sqrt(3)},0)$) -- (0,2) -- cycle;
    \draw ($({-2/sqrt(3)},0)$) -- (0,2/3);
    \draw ($({2/sqrt(3)},0)$) --  (0,2/3);
    \draw (0,2/3) -- (0,2); 
    \end{scope}
    
	\node at (0,2) [vertex]{};
	\node at ($({2/sqrt(3)},0)$) [vertex]{};
	\node at ($({-2/sqrt(3)},0)$) [vertex]{};
	\node at (0,2/3) [vertex]{};
	\node at (0,4/3) [vertex]{};    
	\node at ($({-1/sqrt(3)},1)$) [vertex]{};
	\node at ($({1/sqrt(3)},1)$) [vertex]{};
    \node at (0,0) [vertex]{};
    \node at ($($({-2/sqrt(3)},0)$)!.5!(0,2/3)$) [vertex]{};
    \node at ($($({2/sqrt(3)},0)$)!.5!(0,2/3)$) [vertex]{};

   \end{tikzpicture}
}


\newcommand{\mubayi}{the last author }
\newcommand{\theoremsusingbundle}{Theorems~\ref{nearkchromatic} and \ref{mainF5thm}}
\newcommand{\theoremsproofusingbundle}{proofs of Theorems~\ref{nearkchromatic} and \ref{mainF5thm} are }
\newcommand{\sectionsusingbundle}{Sections~\ref{secChromNearr} and \ref{secF5}}

\section{Introduction}

An \emph{$r$-uniform hypergraph on $n$ vertices} is a collection of $r$-subsets of $V$,
where $V$ is a set of $n$ elements. If $r=2$ then we call it a graph.
The $r$-sets in a hypergraph are called \emph{edges},
and the $n$ elements of $V$ are called \emph{vertices}.  For a hypergraph $H$ let
$V(H)$ denote the set of vertices.  We denote the set of edges by either $E(H)$ or simply
$H$. The \emph{chromatic number} of a hypergraph $H$, denoted $\chi(H)$,
is the least integer $k$ for which there exists a map $f: V(H) \rightarrow [k]$ such that
if $E$ is an edge in the hypergraph then there exist $v,u \in E$ for which $f(v) \neq f(u)$.
For a vertex $v$ in a hypergraph $H$ we let $d(v)$ denote the number of edges in $H$ that
contain $v$.  We let $\delta(H) = \min\{d(v) : v \in V(H)\}$, called the \emph{minimum degree}
of $H$.

\theoremstyle{definition}
\newtheorem*{chromaticdef}{Definition}
\theoremstyle{plain}

\begin{chromaticdef}
Let $\mathcal{F}$ be a family of $r$-uniform hypergraphs.  The \emph{chromatic threshold}
of $\mathcal{F}$, is the infimum
of the values $c \geq 0$ such that the subfamily of $\mathcal{F}$ consisting of hypergraphs
$H$ with minimum degree
at least $c \binom{\left| V(H) \right|}{r-1}$ has bounded chromatic number.
\end{chromaticdef}

We say that $F$ is a subhypergraph
of $H$ if there is an injection from $V(F)$ to $V(H)$ such that every edge in $F$
gets mapped to an edge of $H$.  Notice that this is only possible if both $H$ and $F$ are
$r$-uniform for some $r$.
If $F$ is an $r$-uniform hypergraph, then the family of
\emph{$F$-free} hypergraphs is the family of
$r$-uniform hypergraphs that do not contain $F$ as a (not necessarily induced) subhypergraph.

The study of the chromatic thresholds of graphs was motivated by a question of Erd\H{o}s and
Simonovits~\cite{cr-erdos73}: ``If $G$ is non-bipartite, what bound on $\delta(G)$ forces $G$ to
contain a triangle?''  This question was answered by Andr\'{a}sfai, Erd\H{o}s, and
S\'{o}s~\cite{cr-andrasfai74}, who showed that the answer is $2/5 \left|  V(G) \right|$, achieved by
the graph obtained from $C_5$ by replacing each edge with a copy of $K_{n/5, n/5}$. 
Andr\'{a}sfai, Erd\H{o}s, and S\'{o}s's~\cite{cr-andrasfai74} idea, i.e., blowing up a small
triangle-free graph to create a new graph with the same chromatic number and large minimum degree,
can be generalized to show that for every $k$ and $\epsilon$ there exists a triangle-free graph $G$
with $\chi(G) \geq k$ and $\delta(G) \geq (1/3-\epsilon)|V(G)|$.  This led to the following
conjecture: if $\delta(G) > (1/3 + \epsilon) \left| V(G) \right|$ and $G$ is triangle-free, then
$\chi(G) \le k_{\epsilon}$, where $k_{\epsilon}$ is a constant depending only on $\epsilon$.

Note that the conjecture is equivalent to the statement that the family of triangle-free graphs has
chromatic threshold $1/3$.  The conjecture was proven by Thomassen~\cite{cr-thomassen02}.
Subsequently, there have been three more proofs of the conjecture: one by
\L{}uczak~\cite{cr-luczak06} using the Regularity Lemma, a result of Brandt and
Thomass\'{e}~\cite{cr-brandt11} proving that one can take $k_{\epsilon} = 4$, and a recent proof by
\L{}uczak and Thomass\'{e}~\cite{cr-luczak10} using the concept of Vapnik-Chervonenkis dimension
(which is defined later in this paper).

For other graphs, Goddard and Lyle \cite{cr-goddard10} proved that the chromatic threshold
of the family of $K_r$-free graphs is $(2r-5)/(2r-3)$ while Thomassen \cite{cr-thomassen07} showed that
the chromatic threshold of the family of $C_{2k+1}$-free graphs is zero for $k \geq 2$.
Recently, \L{}uczak and Thomass\'{e}~\cite{cr-luczak10}
gave another proof that the class of $C_{2k+1}$-free graphs has chromatic threshold zero for $k \geq 2$,
as well as several other results about related families, such as Petersen graph-free graphs.
The main result of Allen, B\"{o}ttcher, Griffiths, Kohayakawa and Morris~\cite{abgkm}
is to determine the chromatic threshold of the family of $H$-free graphs for all $H$.

We finish this section with some definitions.

\theoremstyle{definition}
\newtheorem*{assorteddef}{Definition}
\theoremstyle{plain}

\begin{assorteddef}
For an $r$-uniform hypegraph $H$ and a set of vertices $S \subseteq V(H)$, let
$H[S]$ denote the $r$-uniform hypergraph consisting of exactly those edges of $H$
that are completely contained in $S$.  We call this the hypergraph \emph{induced
by} $S$.  A set of vertices $S \subseteq V(H)$ is called \emph{independent} if $H[S]$ contains no
edges and \emph{strongly independent} if there is no edge of $H$ containing at least
two vertices of $S$.
A hypergraph is $s$-partite if its vertex set can be partitioned into $s$ parts, each of
which is strongly independent.

If $\mathcal{H}$ is a family of $r$-uniform hypergraphs, then the family of
\emph{$\mathcal{H}$-free} hypergraphs is the family of $r$-uniform hypergraphs that
contain no member of $\mathcal{H}$ as a (not necessarily induced) subgraph.
For an $r$-uniform hypergraph $H$ and an integer $n$, let $ex(n,H)$ be the maximum number of edges
an $r$-uniform hypergraph on $n$ vertices can have while being $H$-free and let
\[ \pi(H) = \lim_{n\rightarrow\infty}\frac{ex(n,H)}{\binom{n}{r}}.\]
We call $\pi(H)$ the \emph{Tur\'{a}n density} of $H$. 

Let $T_{r,s}(n)$ be the complete $n$-vertex, $r$-uniform, $s$-partite hypergraph with
part sizes as equal as possible.  When $s = r$, we write $T_r(n)$ for $T_{r,r}(n)$.
Let $t_r(n)$ be the number of edges in $T_r(n)$; notice that $t_r(n)\approx\frac{r!}{r^r}\binom{n}{r}$.
We say that an $r$-uniform hypergraph $H$ is \emph{stable} with respect to $T_r(n)$ if
$\pi(H) = r!/r^r$ and for any $\epsilon > 0$ there exists some positive $\delta$ depending only on
$\epsilon$ such that if $G$ is an $n$-vertex,
$H$-free, $r$-uniform hypergraph with at least $(1-\delta)t_r(n)$ edges, then there is a partition
of $V(G)$ into $U_1, U_2, \dots, U_r$ such that all but at most $\epsilon n^r$ edges of $G$ have
exactly one vertex in each part.

Let $\tk{s}{r}$ be the $r$-uniform hypergraph obtained from the complete graph $K_s$
by enlarging each edge with $r-2$ new vertices.  The \emph{core vertices}
of $\tk{s}{r}$ are the $s$ vertices of degree larger than one.
For $s > r$, let $\tkf{s}{r}$ be the family of $r$-uniform hypergraphs such that there exists a
set $S$ of $s$ vertices where each pair of vertices from $S$ are contained together in some edge.  The set $S$
is called the set of \emph{core vertices} of the hypergraph.
For $s \leq r$, let $\tkf{s}{r}$ be the family of $r$-uniform hypergraphs
such that there exists a set $S$ of $s$ vertices where for each pair of vertices $x \neq y \in S$,
there exists an edge $E$ with $E \cap S = \left\{ x,y \right\}$ (the definition is different when $s \leq r$
so that a hypergraph consisting of  a single edge is not in $\tkf{s}{r}$).
It is obvious that $\tk{s}{r} \in \tkf{s}{r}$.
\end{assorteddef}

\section{Results}

Motivated by the above results, we investigate the chromatic thresholds of the families of
$A$-free hypergraphs for some $r$-uniform hypergraphs $A$.
One of our main results concerns a generalization of cycles to hypergraphs.  
A \emph{partial matching} is a hypergraph whose edges are pairwise disjoint (note that it can contain vertices that lie in no edge).

\theoremstyle{definition}
\newtheorem*{nearkpartite}{Definition}
\newtheorem*{criticalhypergraph}{Definition}
\theoremstyle{plain}

\begin{nearkpartite}
Let $H$ be an $r$-uniform hypergraph. We say that $H$ is \emph{near $r$-partite} if $H$ is not
$r$-partite and there exists a partition $V_1 \cup \ldots \cup V_r$ of $V(H)$ such that all edges of
$H$ either cross the partition (have one vertex in each $V_i$) or are contained entirely in $V_1$,
and in addition $H[V_1]$ is a partial matching. We call such a partition a \emph{near
$r$-partition} if it witnesses a smallest $H[V_1]$.  The edges in $H[V_1]$ of a near $r$-partition
are called the \emph{special edges}.  Say that $H$ is \emph{mono near $r$-partite} if in
addition in a near $r$-partition $H[V_1]$ contains exactly one edge.

A hypergraph $H$ is \emph{connected} if for every $x,y \in V(H)$, there exists a sequence of
hyperedges $E_1,\dots,E_t$ such that $x \in E_1$, $y \in E_t$, and $E_i \cap E_{i+1} \neq \emptyset$
for $1 \leq i \leq t-1$.  Let $H$ be an $r$-uniform hypergraph and let $X,Y$ be two disjoint sets of
vertices of $H$.  

Let $C_1, \ldots, C_t$ be the components of $H|_Y$, where $H|_Y$ is the (potentially non-uniform)
hypergraph $\left\{ A \cap Y : A \in E(H) \right\}$ and the \emph{components of $H|_Y$} are the
maximal connected induced subhypergraphs of $H|_Y$.  The vertex set $X$ is \emph{partite-extendible}
to $Y$ if there exists a partition of $X$ into $r$ strong independent sets $X_1, \ldots, X_r$ so
that for every $1 \leq i \leq t$, there do not exist $x_1 \in X_j$ and $x_2 \in X_{\ell}$ for $j
\neq \ell$ and two edges $E_1, E_2 \in E(C_i)$ such that $E_1 \cup \{ x_1 \} \in E(H)$ and $E_2 \cup
\{x_2\} \in E(H)$.  Informally, each component extends to at most one part of the partition of $X$.
\end{nearkpartite}

Our main theorem claims that for an infinite family of hypergraphs $H$ the chromatic threshold of
the family of $H$-free hypergraphs is zero.  We will demonstrate that this family of hypergraphs is infinite below,  applying this Theorem~\ref{nearkchromatic} to a type of
hypergraph cycle (see Corollary~\ref{cyclecorollary}).

\newtheorem{nearkchromatic}[thmctr]{Theorem}
\begin{nearkchromatic} \label{nearkchromatic}
Let $H$ be an $r$-uniform, near $r$-partite hypergraph with near $r$-partition $V_1, \ldots,
\linebreak[1] V_r$.  If every component, which may be a single vertex, 
of $H[V_1]$ is partite-extendible to $V_2 \cup \ldots \cup
V_r$, then the chromatic threshold of the family of $H$-free hypergraphs is zero.
\end{nearkchromatic}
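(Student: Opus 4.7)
The plan is to extend the \L uczak--Thomass\'e VC-dimension argument for near bipartite graphs to the hypergraph setting, using the fiber bundle dimension machinery advertised in the abstract. Fix $\epsilon > 0$ and let $G$ be an $H$-free $r$-uniform hypergraph on $n$ vertices with $\delta(G) \geq \epsilon \binom{n}{r-1}$; the goal is a uniform bound $\chi(G) \leq k(H,\epsilon)$.

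The argument splits into two pieces. First, I would associate a fiber bundle to $G$ whose fibers record, for each vertex $v$, an $(r-1)$-uniform link-like structure on $V(G)$; the minimum-degree hypothesis guarantees the fibers are dense. A Sauer--Shelah style trace estimate on a fiber bundle of dimension at most $d$, combined with the density of fibers, should give that $V(G)$ can be covered by at most $k(d,\epsilon)$ strongly independent sets, reducing the theorem to the claim that $H$-free fiber bundles have dimension bounded by some $d=d(H)$.

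The core of the proof is this dimension bound. Assume the fiber bundle dimension exceeds $|V(H)|$ and extract a shattered configuration of that size together with its witnesses. Place the vertices of $V_2,\ldots,V_r$ at the shattered coordinates; because every subset of the shattered set is realized by some witness, one may realize all crossing edges of $H$ that lie inside $V_2\cup\ldots\cup V_r$ or link $V_1$ to $V_2\cup\ldots\cup V_r$. For each component $C$ of $H[V_1]$, the partite-extendibility hypothesis furnishes a partition $C = X_1\cup\ldots\cup X_r$ into strong independent sets whose compatibility with the components of $H|_{V_2\cup\ldots\cup V_r}$ is exactly what is needed to choose, without conflict, witness vertices realizing the special edge inside $C$ together with its incident crossing edges. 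Iterating over the components of $H[V_1]$ (which are disjoint, since $H[V_1]$ is a partial matching) completes an embedding of $H$ into $G$, contradicting $H$-freeness.

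The main obstacle is arranging the fiber bundle and the shattering so that components of $H[V_1]$ translate cleanly into non-conflicting witness choices: this is precisely where the formulation of partite-extendibility, namely that distinct parts $X_j, X_\ell$ never both attach to the same component of $H|_{V_2\cup\ldots\cup V_r}$, is essential. A secondary technical point is to define the fiber bundle so that the Sauer--Shelah trace bound combines with the minimum-degree condition to yield a bounded cover by strongly independent sets; this mirrors the graph argument of \L uczak--Thomass\'e but requires care in the $r$-uniform setting, where independence and strong independence must be distinguished.
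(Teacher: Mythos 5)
Your high-level blueprint matches the paper's: associate a fiber bundle to $G$, show that bounded dimension forces bounded chromatic number, and show that unbounded dimension forces an embedding of $H$ via the partite-extendibility hypothesis. You also correctly identify the essential role of partite-extendibility (each component of $H|_{V_2\cup\dots\cup V_r}$ attaches to at most one vertex of a component of $H[V_1]$, preventing conflicting witness choices). But two significant steps are mischaracterized in ways that would derail the proof as written.

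First, the ``bounded dimension $\Rightarrow$ bounded $\chi$'' step is not a Sauer--Shelah trace estimate. The paper's Theorem~\ref{coloringthm} is a density-increment / partition-refinement argument (Section~\ref{secpartitionproof}): one iteratively refines a partition of $(V(B),\binom{F}{\rg})$, gaining density $\eta$ at each step, plus an induction on the base uniformity $\rb$ to handle pairs $x,y$ in an edge with $\gamma(x)\cap\gamma(y)$ large. This is a substantial lemma and is nothing like a direct trace bound; it also produces a bounded cover by ordinary independent sets (no monochromatic edge), not strongly independent ones --- a cover by strongly independent sets is a much stronger claim that the argument does not deliver and is not needed.

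Second, the fiber bundle dimension is not a shattering notion. $\vc_L(B,\gamma,F)\geq d$ means there is a matching $E_1,\dots,E_d$ of edges of $B$ such that every transversal $\{x_1,\dots,x_d\}$ with $x_i\in E_i$ has a section $\gamma(x_1)\cap\dots\cap\gamma(x_d)$ containing a copy of $L$ (here $L$ is a large complete $(r-1)$-partite, $(r-1)$-uniform hypergraph). The embedding of $H$ then goes: the special edges $A_i$ and isolated vertices $a_i$ of $H[V_1]$ are mapped into the matching edges $E_i$ of $G[X_1]$, while the components of $H|_{V_2\cup\dots\cup V_r}$ are mapped into the complete $(r-1)$-partite hypergraphs sitting inside the appropriate sections. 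Your picture places $V_2,\dots,V_r$ at ``shattered coordinates'' and leaves $V_1$ unaccounted for inside actual hyperedges of $G$ --- but the whole point of using a \emph{matching} of base edges is precisely to have a place to realize the special edges of $H[V_1]$. Relatedly, the paper does not use the neighborhood bundle of $G$ directly; it first takes a random equitable $r$-partition of $V(G)$, sets $B=G[X_1]$ and $F=X_2\cup\dots\cup X_r$, and lets $\gamma(x)$ record the cross-edges through $x$. This makes the sections $(r-1)$-partite, which is what allows the target hypergraph $L$ in Theorem~\ref{coloringthm} to be complete $(r-1)$-partite (hence of Tur\'an density zero), and it makes the matching live inside one part where the special edges can land.
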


One interesting aspect of the chromatic threshold of graphs, first proved by \L{}uczak and
Thomass\'{e}~\cite{cr-luczak10}, is that there exists graphs $G$ for which the chromatic threshold
of the family of $G$-free graphs is zero while the Tur\'{a}n density of $G$ is non-zero.  We show
that a similar phenomenon occurs in hypergraphs; for a subfamily of the hypergraphs considered in
Theorem~\ref{nearkchromatic} we in fact determine the exact extremal hypergraph (see
Theorem~\ref{nearkextremal}). We prove that if a mono near $r$-partite hypergraph $H$ has Tur\'{a}n
density $r!/r^r$ and is stable with respect to $T_r(n)$ (an example of such a graph is given in
Theorem~\ref{cycleiscritical}), then its unique extremal hypergraph is the complete $r$-partite
hypergraph.  Similar results occur for graphs; see Simonovits~\cite{sim}, where for critical graphs
the Erd\H{o}s-Stone Theorem~\cite{rrl-erdos46} was sharpened.

\begin{criticalhypergraph}
Let $H$ be an $r$-uniform hypergraph.  We say that $H$ is \emph{critical} if
\begin{itemize}
  \setlength{\itemsep}{1pt}
  \setlength{\parskip}{0pt}
  \setlength{\parsep}{0pt}
\item $H$ is mono near $r$-partite,
\item there exists a near $r$-partition of $H$ whose special edge has at least $r-2$ vertices of degree one,
\item $H$ is stable with respect to $T_r(n)$.
\end{itemize}
\end{criticalhypergraph}

Recall that the stability of $H$ implies that $\pi(H) = r!/r^r$.

\newtheorem{nearkextremal}[thmctr]{Theorem}
\begin{nearkextremal} \label{nearkextremal}
Let $H$ be an $r$-uniform critical hypergraph.  Then there exists some $n_0$ such that
for $n > n_0$, $T_r(n)$ is the unique $H$-free hypergraph with the most edges.
\end{nearkextremal}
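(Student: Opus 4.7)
The plan is to use the stability of $H$ to extract an almost $r$-partite structure from a near-extremal $H$-free hypergraph $G$, then exploit the degree-one vertices of the special edge to embed $H$ whenever $G$ contains any non-crossing edge. Note first that $\pi(H) = r!/r^r$ forces $H$ to be non-$r$-partite in the strong sense (otherwise $\pi(H) = 0$), so $T_r(n)$ is automatically $H$-free and $\ex(n,H) \ge t_r(n)$. We set out to show that every $H$-free $G$ with $|E(G)| \ge t_r(n)$ must equal $T_r(n)$.

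Given any fixed small $\eta > 0$, stability produces a partition $V(G) = U_1 \cup \cdots \cup U_r$ leaving at most $\eta n^r$ non-crossing edges; among all such partitions we pick one maximizing the number of crossing edges. A standard cleaning argument --- iteratively removing vertices of crossing-degree less than $(1-\epsilon)(n/r)^{r-1}$ and then applying AM--GM to the part sizes, using $|E(G)| \ge t_r(n)$ as the lower bound --- shows that each $|U_i| = n/r + o(n)$, that every remaining vertex has crossing-degree $(1-o(1))(n/r)^{r-1}$, and that at most $o(n^2)$ pairs inside each part have abnormally small common crossing neighbourhood.

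The heart of the argument is the following embedding step. Suppose for contradiction that some edge $e = \{x_1, \dots, x_r\}$ of $G$ lies inside a single part, say $U_1$. Identify $e$ with the special edge $e_0 = \{u_1, \dots, u_r\}$ of $H$: place the two possibly high-degree vertices $u_{r-1}, u_r$ at $x_{r-1}, x_r$, and map the $r-2$ degree-one vertices $u_1, \dots, u_{r-2}$ arbitrarily to $x_1, \dots, x_{r-2}$; since these latter vertices impose no further constraint, no adjacency requirement survives this choice except those incident to $u_{r-1}$ or $u_r$. Then embed the remaining constant-size set $V(H) \setminus e_0$ greedily into $U_1, \dots, U_r$, obeying the part assignment $V_i \to U_i$. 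Each new vertex contributes only $O(1)$ edge constraints, and the typicality from the previous step provides $\Omega(n)$ admissible targets at every step, producing $H \subset G$ --- a contradiction. Hence no edge lies inside any $U_i$, so $G$ is $r$-partite; but then $|E(G)| \le \prod_i |U_i|$, which is maximized by balanced parts at exactly $T_r(n)$, yielding $G = T_r(n)$.

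The main obstacle is the embedding: the ``at least $r-2$ degree-one vertices'' hypothesis is essential, since only two vertices of $e$ need to simultaneously realize all incidences of $u_{r-1}$ and $u_r$. If the pair $\{x_{r-1},x_r\}$ happens to be a ``bad'' pair (abnormally small co-degree into $U_2, \dots, U_r$), we need to choose another pair among the $\binom{r}{2}$ possibilities inside $e$; this is why the cleaning in the second paragraph must produce co-degree estimates for almost every pair rather than merely single-vertex typicality. Verifying that a good pair always exists inside an arbitrary bad edge $e$, together with the details of the greedy embedding accounting for forbidden vertex reuse, constitutes the main technical work.
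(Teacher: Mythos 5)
Your overall plan (stability, then pick a partition maximizing crossing edges, then use the special edge's degree-one vertices to embed $H$ into any putative bad edge) shares its skeleton with the paper's proof, but the crucial step you flag as ``the main technical work'' is not a loose end that the cleaning argument will tie up --- it is where the paper's proof has genuine content that your outline does not supply, and which I don't believe the cleaning route can supply. Two specific problems. First, your cleaning step removes vertices of low crossing degree, but the conclusion you need is about the \emph{original} $G$, not about a cleaned subhypergraph; you never argue that nothing is removed (and there is no free induction on $n$ hiding here, because a bad edge could live entirely among the removed vertices). Second, the quantitative bookkeeping does not close: stability only gives you $|B| < \epsilon n^r$ bad edges, so the number of vertices with, say, $\eta n^{r-1}$ bad (or missing-crossing) incidences can be on the order of $(\epsilon/\eta)n$, which is a constant fraction of $n$; and the number of ``bad pairs'' you can exclude is $o(n^2)$, yet the set of bad edges may be small enough ($O(\epsilon n^r)$ of them) that every one of them could be covered by such pairs. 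So ``a good pair exists inside every bad edge'' is not a corollary of typicality estimates --- it is false without a much more delicate choice of what ``good'' means.

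The paper resolves exactly this by replacing the find-one-embedding strategy with a counting strategy. It fixes $W \in B$ and counts all embeddings of $H - E$ sending $E$ to $W$, observing that each must hit at least one missing cross-edge $L \in M$; then it bounds, for each $L$, how many embeddings $L$ can ``kill.'' The subtle move is the $a$-avoiding/$v\mapsto a$ case split (Claims 2 and 3), which forces the missing edge to avoid the problematic vertex $a$ --- in Claim 2 by mapping a degree-one vertex of $E$ to $a$, and in Claim 3 by insisting a non-degree-one vertex $v$ be mapped to $a$ and the edges through $v$ land in actual cross-edges of $G$ through $a$, which is where the choice of partition maximizing crossing edges is used and where supersaturation enters. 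Only after these per-vertex bounds does the paper find a single ``good'' bad edge $W$ (Claim 4), but ``good'' there means $W$ avoids the set $A$ of vertices incident to many \emph{missing} cross-edges --- a condition tied directly to $M$ and proved feasible by a pigeonhole using $|M| \le |B|$, not by a co-degree typicality estimate. Your sketch does not anticipate either the $a$-avoiding trick or the need to define ``good'' via $M$ rather than via crossing co-degrees, and without those the contradiction does not come out. So this is a genuine gap, not a deferrable detail.
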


\noindent A particularly interesting critical family is one that generalizes cycles to hypergraphs.
\theoremstyle{definition}
\newtheorem*{CkDef}{Definition}
\theoremstyle{plain}

\begin{CkDef}\label{CkDef}
Fix $m\geq 4$ and let 
\[
	n=\left\{ \begin{array}{rl}	r\lfloor\frac{m}{2}\rfloor + r-1 & \mbox{ if $m$ is odd,} \\
						r\frac{m}{2}  & \mbox{ if $m$ is even.} \end{array} \right.
\]
Then \C{m}{r} is the $r$-uniform hypergraph with vertices $v_1,\dots, v_n$ and edges $E_1, \dots, E_m$ such that 
\vspace{-.3cm}
\begin{enumerate}
  \setlength{\itemsep}{1pt}
  \setlength{\parskip}{0pt}
  \setlength{\parsep}{0pt}
	\item each edge contains $r$ consecutively-labeled vertices, modulo $m$, and in particular $E_1 = \{v_1,\ldots, v_r\}$,
	\item edges $E_i$ and $E_j$ intersect if and only if $i$ and $j$ are consecutive modulo $m$, 
	\item if $i$ is odd and $1<i< m$ then $|E_{i-1}\cap E_{i}| = r-1$ and $|E_i\cap E_{i+1}| = 1$.  
	\item if $m$ is odd then $|E_1\cap E_m| = 1$; if $m$ is even then $|E_1\cap E_m|=r-1$.
\end{enumerate}
\end{CkDef}

\begin{figure}[h]
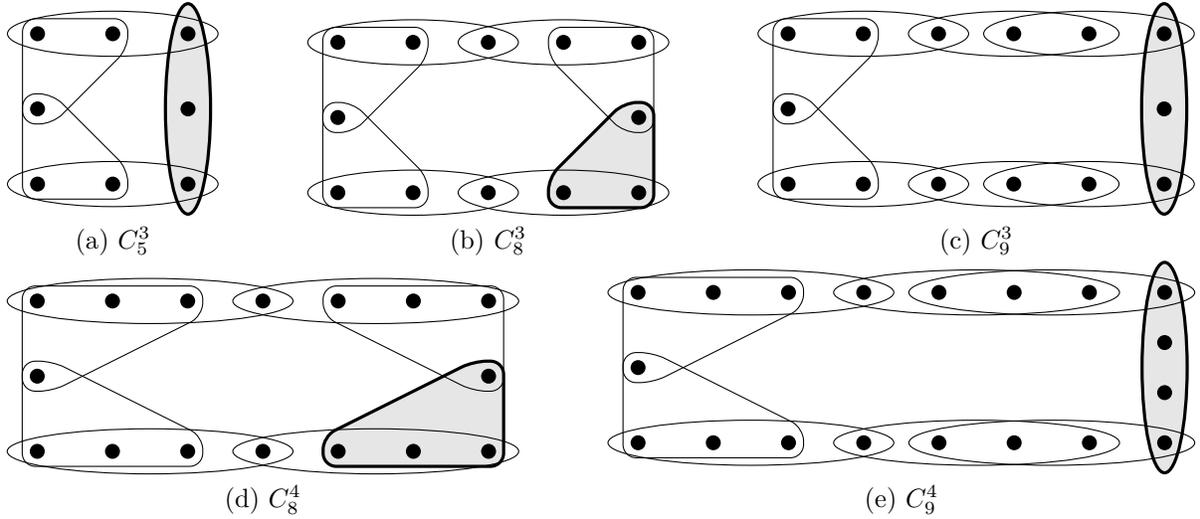

\begin{center}
\subfloat[\C{5}{3}]{\cyclefivefigure}
\hspace{0.3in}
\subfloat[\C{8}{3}]{\cycleeightfigure}
\hspace{0.3in}
\subfloat[\C{9}{3}]{\cycleninefigure}
\hspace{0.3in}
\subfloat[\C{8}{4}]{\cyclefoureightfigure}
\hspace{0.3in}
\subfloat[\C{9}{4}]{\cyclefourninefigure}
\end{center}
\caption{Hypergraph Cycles;  $E_1$ indicated in each.}
\end{figure}

\noindent We say that \C{m}{r} is \emph{odd} if $m$ is odd, and \emph{even} otherwise.  

\newtheorem{notrpartite}[thmctr]{Lemma}
\begin{notrpartite} \label{notrpartite}
  If $m = 2k+1 \geq 5$ is odd then \C{m}{r} is not $r$-partite but is mono near-$r$-partite with
  partition $V_1 = E_1 \cup \{v_{ir} : 1 \leq i \leq k\}$ and $V_j = \{v_{ir+j-1} : 1 \leq i \leq
  k+1\}$ for $2 \leq j \leq r$.  Also, every component of $C^r_m[V_1]$ is partite-extendible to
  $V_2 \cup \dots \cup V_r$.
\end{notrpartite}

\begin{proof}
Suppose $m = 2k+1$ for some integer $k$.  Notice that because $m$ is odd, we have $|E_{2k+1}\cap
E_1|=1$.  Because each edge contains consecutively-indexed vertices (modulo $m$), it follows that
$v_1$ is the common vertex.  Then $E_{2k+1}$ consists of the vertices $v_{rk+1},
v_{rk+2},\dots,v_{rk+r-1}, v_1$.  Suppose $f: V \rightarrow \{0,\dots,r-1\}$ is an $r$-coloring of
the vertices of \C{2k+1}{r} such that each color class induces a strongly independent set.  Now,
$|E_1\cap E_2|=1$ and $|E_2\cap E_3|=r-1$ (see Figure~\ref{notr_fig}).  It therefore follows that
$v_r$ is the only vertex in $E_2\setminus E_3$ and that $v_{2r}$ is the only vertex in $E_3\setminus
E_2$.  Therefore, $f(v_r)=f(v_{2r})$.  Similarly, vertices $v_r, v_{2r}, v_{3r}, \dots, v_{kr}$ all
have the same color.  Finally, $v_1=E_m\setminus E_{m-1}$ and $v_{kr}=E_{m-1}\setminus E_{m}$, and
so $f(v_1)=f(v_{kr})$.  This shows that \C{m}{r} is not $r$-partite, because $f(v_{kr})=f(v_r)$ and
$v_1,v_r$ are in $E_1$.  The hypergraph $C^r_m - E_1$ is $r$-partite via the coloring $f(v_i) = i
\pmod r$.  Also, all vertices of $E_1$ can be colored by zero to obtain a coloring where the color
classes form a near $r$-partition of \C{m}{r}.

Let $V_i$ be the vertices colored $i-1$ for $1 \leq i \leq r$.  The components of $C^r_m[V_1]$ are
the edge $E_1$ plus the single vertex components $\{v_{ir}\}$ for $2 \leq i \leq k$.  The components
of $C^r_m|_{V_2\cup\dots\cup V_r}$ (which is a $(r-1)$-uniform hypergraph) consists of a matching.
One $(r-1)$-edge of this matching is $E_2 \cap E_3$, one is $E_4 \cap E_5$, and so forth (see
Figure~\ref{notr_fig}).  First, $E_1$ is partite-extendible to $V_2 \cup \dots \cup V_r$.  Indeed,
only $E_2$ and $E_{2k+1}$ use vertices of $E_1$ and they use vertices from different components of
$C^r_m|_{V_2\cup\dots\cup V_r}$.  Also, trivially each single vertex component $\{v_{ir}\}$ is
partite-extendible to $V_2\cup\dots\cup V_r$, finishing the proof.
\end{proof}

\begin{figure}[h]
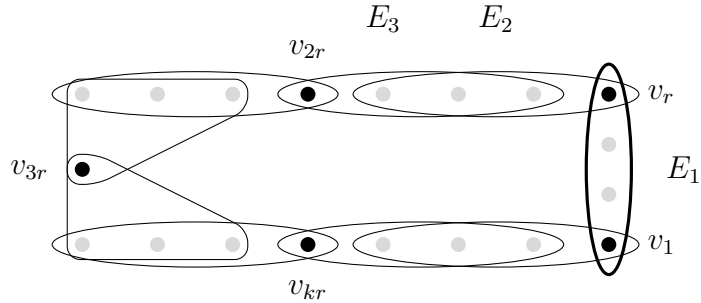

\begin{center}
	\cyclefourninecolored
\caption{Odd cycles are not $r$-partite.} \label{notr_fig}
\end{center}
\end{figure}

A theorem of Keevash and the last author~\cite{keevash04}, combined with a theorem of
Pikhurko~\cite{pikhurko08}, the supersaturation result of Erd\H{o}s and
Simonovits~\cite{rrl-erdos83}, and the hypergraph removal lemma of Gowers, Nagle, R\"{o}dl, and
Skokan \cite{rrl-gowers07,rrl-nagle06,rrl-rodl04,Rodl06,rrl-tao06} prove that \C{2k+1}{3} and
\C{2k+1}{4} are critical, see Theorem~\ref{cycleiscritical}.

For $r$ larger than four, however, \C{2k+1}{r} is not critical.  A result of Frankl and F{\"u}redi~\cite{frankl89} can easily be extended to prove that if $r \geq 5$ then 
$\pi(C^r_{2k+1})\ge \frac{1}{\binom{r}{2}e^{1+1/(r-1)}} > \frac{r!}{r^r}$.  Using techniques 
similar to those in Section~\ref{extremal}, it can in fact be shown that 
$\pi(C^5_{2k+1}) = \frac{6!}{11^4} > \frac{5!}{5^5}$ and $\pi(C^6_{2k+1}) = \frac{11\cdot 6!}{12^5} > \frac{6!}{6^6}$. 

\newtheorem{cycleiscritical}[thmctr]{Theorem}
\begin{cycleiscritical} \label{cycleiscritical}
The cycles \C{2k+1}{3} and \C{2k+1}{4} are critical for every $k \geq 2$.
\end{cycleiscritical}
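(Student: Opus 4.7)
The plan is to verify each of the four clauses defining criticality for $H = C_{2k+1}^r$ with $r \in \{3,4\}$. The first two clauses are direct structural observations. For mono near $r$-partiteness, I would place in $V_1$ the vertices of the special edge $E_1$ together with every vertex that begins an edge $E_i$ (for odd $i \geq 3$) sharing $r-1$ vertices with $E_{i-1}$, and distribute the remaining vertices of each $E_i$ cyclically among $V_2, \ldots, V_r$; a direct check shows every non-special edge has one vertex in each part and $H[V_1] = \{E_1\}$. For the degree-one condition, $E_1$ intersects $E_2$ in exactly one vertex and $E_m$ in exactly one vertex, so $r-2$ of its $r$ vertices have degree one in $H$.

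The third and fourth clauses, $\pi(H) = r!/r^r$ and stability of $H$ with respect to $T_r(n)$, carry the content of the theorem. I would combine the Keevash--Mubayi stability theorem for the 3-uniform generalized triangle $F_5 \cong C_3^3$, Pikhurko's analogous stability theorem for $C_3^4$, the Erd\H{o}s--Simonovits supersaturation principle, and the hypergraph removal lemma of Gowers--Nagle--R\"odl--Skokan. The lower bound $\pi(H) \geq r!/r^r$ is immediate from the preceding lemma, since $T_r(n)$ is $H$-free. The heart of the argument is the following \emph{Extension Claim}: for every $\beta > 0$ there exists $n_0$ such that any $r$-graph on $n \geq n_0$ vertices containing at least $\beta n^{|V(C_3^r)|}$ copies of $C_3^r$ contains a copy of $C_{2k+1}^r$. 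The Extension Claim would be proved by iterated pigeonhole in link-hypergraphs: starting from any copy of $C_3^r$, whose pair of heavy-overlap edges already contributes two consecutive edges of the target cycle (overlap $r-1$), one uses the many $C_3^r$'s as a reservoir to append $2k-2$ further edges one at a time in the alternating overlap pattern $1, r-1, 1, \ldots$, then closes up with a final overlap-$1$ edge.

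Both remaining clauses follow from the Extension Claim. For the Tur\'an density upper bound, if $G$ is $C_{2k+1}^r$-free with $(r!/r^r + \varepsilon)\binom{n}{r}$ edges, supersaturation applied to the Keevash--Mubayi / Pikhurko theorem $\pi(C_3^r) = r!/r^r$ yields $\Omega_\varepsilon(n^{|V(C_3^r)|})$ copies of $C_3^r$ in $G$, and the Extension Claim produces a $C_{2k+1}^r$, a contradiction. For stability, if $G$ is $C_{2k+1}^r$-free with $(1-\delta)\, t_r(n)$ edges, then by the contrapositive of the Extension Claim $G$ has only $o(n^{|V(C_3^r)|})$ copies of $C_3^r$; the hypergraph removal lemma then lets one delete $o(n^r)$ edges to obtain a $C_3^r$-free subgraph $G'$ with $(1-\delta-o(1))\, t_r(n)$ edges, to which Keevash--Mubayi / Pikhurko stability applies, placing $G'$, and hence $G$, within $o(n^r)$ edge-edits of $T_r(n)$.

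The main obstacle is the Extension Claim: transforming many copies of the short cycle $C_3^r$ inside a dense $r$-graph into a single copy of the much longer $C_{2k+1}^r$ with the exact alternating overlap pattern and terminating after an odd number of edges. The technical heart is a sequential link-hypergraph argument that, at each of the $2k-2$ extension steps, uses the abundance of $C_3^r$'s to locate a new edge with the required overlap with the previously appended edge; the odd parity is what forces the final overlap-$1$ closing edge and simultaneously what prevents $C_{2k+1}^r$ from being $r$-partite, hence from embedding in $T_r(n)$.
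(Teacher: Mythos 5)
Your roadmap is correct and coincides with the paper's in all but one step. You verify the first two clauses by inspection (the paper labels the special edge $E_{2k+1}$ rather than $E_1$; by the cyclic symmetry this is only a relabeling), and you recognize that the weight of the argument is to transfer the known Tur\'an and stability results for the $3$-edge generalized triangle --- $F_5 = B_{3,2} \cong C_3^3$ and $B_{4,2} \cong C_3^4$ --- to the long odd cycle $C_{2k+1}^r$ using supersaturation and the hypergraph removal lemma. Your proofs of the two density clauses from the ``Extension Claim'' are essentially identical to the paper's proof of Lemma~\ref{StabLem}.

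Where you and the paper diverge is in how the Extension Claim is established, and here you are proposing to do strictly more work than necessary. You envision an iterated pigeonhole in link hypergraphs: start with one copy of $C_3^r$ and repeatedly append an edge with the right overlap ($1$, then $r-1$, alternating) $2k-2$ times, then close up. This sequential argument would have to track the density of available extensions at each step and certify that the odd parity of $2k+1$ and the wrap-around constraint can be met simultaneously --- neither of which you address. The paper sidesteps the entire iterated construction with a single structural observation: $C_{2k+1}^r$ is a subhypergraph of the $k$-fold blow-up $B_{r,2}(k)$. Once you see this, the Extension Claim is an immediate consequence of the standard blow-up form of supersaturation (Corollary~\ref{supersaturationblowup}): $\Omega(n^{2r-1})$ copies of $B_{r,2}$ force a copy of $B_{r,2}(k)$, and hence of $C_{2k+1}^r$. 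The paper then abstracts the transfer principle into Lemma~\ref{StabLem} --- stability of $H$ with respect to $T_r(n)$ is inherited by every subhypergraph of any blow-up $H(t)$ --- from which both $\pi(C_{2k+1}^r) = r!/r^r$ and stability of $C_{2k+1}^r$ drop out simultaneously. So your plan is sound, but the containment $C_{2k+1}^r \subseteq B_{r,2}(k)$ is the crucial simplifying observation that replaces your whole ``sequential link-hypergraph'' step with one line.
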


Theorems~\ref{nearkchromatic}, \ref{nearkextremal}, and \ref{cycleiscritical} together with
Lemma~\ref{notrpartite} proves the following corollary, which extends the results
in~\cite{cr-thomassen07} and \cite{cr-luczak10} that the chromatic threshold of the family of
$C_{2k+1}$-free graphs is zero.

\newtheorem{cyclecorollary}[thmctr]{Corollary}
\begin{cyclecorollary} \label{cyclecorollary}
For $r = 3$ or $r = 4$ and every $k \geq 2$, there exists some $n_0$ such that for $n > n_0$,
the unique $n$-vertex, $r$-uniform, \C{2k+1}{r}-free hypergraph with the largest number of edges is $T_r(n)$.
For all $r,k \geq 2$, the chromatic threshold of the family of \C{2k+1}{r}-free hypergraphs is zero.
\end{cyclecorollary}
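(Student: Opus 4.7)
The plan is to combine Theorems~\ref{nearkchromatic}, \ref{nearkextremal}, and \ref{cycleiscritical}; the corollary is essentially their packaging. The first claim (unique extremality for $r\in\{3,4\}$) is immediate: Theorem~\ref{cycleiscritical} provides that \C{2k+1}{r} is critical for these $r$ and all $k\geq 2$, and Theorem~\ref{nearkextremal} then produces $n_0$ such that $T_r(n)$ is the unique maximum \C{2k+1}{r}-free hypergraph on $n>n_0$ vertices.

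For the second claim, I plan to apply Theorem~\ref{nearkchromatic} to $H=$\C{2k+1}{r} for $r,k\geq 2$, which requires exhibiting a near $r$-partition $V(H)=V_1\cup\cdots\cup V_r$ such that every component of $H[V_1]$ is partite-extendible to $V_2\cup\cdots\cup V_r$. I would build the partition by designating $E_1$ as the special edge, placing all of $E_1$ in $V_1$, and walking around the edge sequence $E_2,E_3,\ldots,E_{2k+1}$. At each tight joint (shared overlap of size $r-1$), the $r-1$ shared vertices were already distributed across $V_2,\ldots,V_r$ in the previous step, so the single new vertex of the next edge is forced into $V_1$ to make that edge cross the partition; at each loose joint, the shared vertex is precisely the vertex just placed in $V_1$, and the $r-1$ new vertices are distributed one per part over $V_2,\ldots,V_r$. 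The closing interaction between $E_{2k+1}$ and $E_1$ fits the same pattern, since its single shared vertex lies in $E_1\subset V_1$ automatically. Using rule~4 of Definition~\ref{CkDef} (non-adjacent edges are disjoint), one checks that $E_1$ is the only edge contained entirely in $V_1$, so $H[V_1]$ is a partial matching consisting of $E_1$ together with isolated vertices.

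To verify partite-extendibility, the isolated-vertex components are trivially handled. For the component $X=E_1$, the singleton partition $X_1,\ldots,X_r$ is forced, since $E_1$ is itself an edge. The only edges $A\in E(H)$ with $A\subset X\cup Y$ (where $Y=V_2\cup\cdots\cup V_r$) and $|A\cap X|=1$ are $E_2$ and $E_{2k+1}$, whose unique $X$-vertices are the anchors $v_a\in E_1\cap E_2$ and $v_b\in E_1\cap E_{2k+1}$; every other edge $E_i$ ($3\leq i\leq 2k$) contains a loose-joint vertex in $V_1\setminus X$ and is excluded. Removing $E_1$ together with the loose-joint $V_1$-vertices breaks the remainder of the cycle into $k$ tight pieces, so $H|_Y$ has $k$ components, and since $k\geq 2$ the anchors $v_a$ and $v_b$ fall into distinct components. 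Thus no two singletons see a common component, the partite-extendibility condition holds vacuously, and Theorem~\ref{nearkchromatic} yields chromatic threshold zero.

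The main technical care is the explicit bookkeeping for the near $r$-partition at the closing joint of the cycle, which Lemma~\ref{notrpartite} already hints at: it is precisely the odd-cycle parity that prevents \C{2k+1}{r} from being fully $r$-partite and that forces $E_1$ to play the role of special edge. Once this is handled, the partite-extendibility check reduces to the observation that $E_1$ meets the rest of the cycle only through its two anchors, which lie on opposite ends of the post-removal path.
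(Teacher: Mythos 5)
Your proof is correct and follows the same route as the paper: combining Theorems~\ref{nearkchromatic}, \ref{nearkextremal}, and \ref{cycleiscritical} with the observation that \C{2k+1}{r} is near $r$-partite with the required partite-extendibility. What you add is the explicit verification that the paper dismisses as a ``simple observation,'' and your choice of $E_1$ as the special edge is the correct one---it is the unique edge flanked by two size-one overlaps. (Section~\ref{extremal} names $E_{2k+1}$ instead, but since $E_{2k}$ meets $E_{2k+1}$ in $r-1$ vertices, $E_{2k}$ could not cross the partition if $E_{2k+1} \subseteq V_1$; relatedly, condition~6 in the definition of $C_m^r$ appears to have its parities reversed, as the vertex count $n = rk+r-1$ and the figures force $|E_1 \cap E_{2k+1}| = 1$ when $m$ is odd.) Your partite-extendibility check is exactly right: with $V_1 = E_1 \cup \{v_{2r}, \ldots, v_{kr}\}$, only $E_2$ and $E_{2k+1}$ have their $V_1$-vertex inside $E_1$, they attach respectively to the first and last of the $k$ pairwise-disjoint components of $H|_{V_2 \cup \cdots \cup V_r}$, and those components are distinct precisely because $k \geq 2$---which also explains why the corollary requires $k \geq 2$.
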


Note that \L{}uczak and Thomass\'{e}~\cite{cr-luczak10} proved Theorem~\ref{nearkchromatic} for graphs,
and they conjectured that the family of $H$-free graphs has chromatic threshold zero if and only if
$H$ is near acyclic and triangle free.  (A graph $G$ is \emph{near acyclic} if there exists an independent
set $S$ in $G$ such that $G-S$ is a forest and every odd cycle has at least two vertices in $S$.)
This conjecture was verified by Allen, B{\"o}ttcher, Griffiths, Kohayakawa and Morris~\cite{abgkm}.
We pose a similar question for hypergraphs.

\newtheorem{nearkchromaticconj}[thmctr]{Problem}
\begin{nearkchromaticconj}
Characterize the $r$-uniform hypergraphs $H$ for which the chromatic threshold of the family
of $H$-free hypergraphs has chromatic threshold zero.
\end{nearkchromaticconj}

Another way to generalize the triangle to $3$-uniform hypergraphs is the hypergraph
$F_5$, which is the
hypergraph with vertex set $\left\{ a,b,c,d,e \right\}$ and edges $\{a,b,c\}$, $\{a,b,d\}$,
and $\{c,d,e\}$.  Frankl and F\"{u}redi~\cite{frankle-fur} proved that $ex(n,F_5)$ is
achieved by $T_3(n)$ for $n>3000$ (recently Goldwasser~\cite{gold} has determined $ex(n, F_5)$ for all $n$).  We prove the following bounds on the chromatic threshold of
the family of $F_5$-free $3$-uniform hypergraphs.

\newtheorem{mainF5thm}[thmctr]{Theorem}
\begin{mainF5thm} \label{mainF5thm}
The chromatic threshold of the family of $F_5$-free $3$-uniform hypergraphs is
between $6/49$ and $(\sqrt{41}-5)/8 \approx 7/40$.
\end{mainF5thm}

The rest of the paper is organized as follows.
First, in Section~\ref{secFiberDef} we define and motivate fiber bundles and fiber bundle dimension,
the main tools in the proofs of Theorem~\ref{nearkchromatic} and \ref{mainF5thm}.
Next, in Section~\ref{secChromNearr} we show the power of fiber bundle dimension
by giving a relatively short proof of Theorem~\ref{nearkchromatic}.
We prove our key theorem about fiber bundle dimension, Theorem~\ref{coloringthm},
in Section~\ref{secpartitionproof}. In
Section~\ref{extremal}, we prove that \C{2k+1}{3} and \C{2k+1}{4} are 
critical (Theorem~\ref{cycleiscritical}), and then prove Theorem~\ref{nearkextremal}.
The proof of Theorem~\ref{mainF5thm} is given in Section~\ref{secF5}.  The final section gives lower bounds for
several other families of hypergraphs, along with conjectures and open problems.
The lower bounds all follow from specific constructions, some of which use a generalized
Kneser hypergraph; this graph is defined and discussed in Section~\ref{secnewKneser}. We also make a conjecture about the chromatic number of 
generalized
Kneser hypergraphs; see Conjecture~\ref{newkneserconj}.

Throughout this paper, we occasionally omit the floor and ceiling signs for the sake of clarity.

\section{Fiber Bundles and Fiber Bundle Dimension} \label{secFiberDef}

The \theoremsproofusingbundle based on a method by 
\L{}uczak and Thomass\'{e}~\cite{cr-luczak10} to color graphs, which itself was based on the 
Vapnik-Chervonenkis dimension.
Let $H$ be a hypergraph.  A subset $X$ of $V(H)$ is \emph{shattered} by $H$
if for every $Y \subseteq X$, there exists an $E \in H$ such that $E \cap X = Y$.
Introduced in~\cite{vc-sauer72} and~\cite{vc-vapnik71}, the \emph{Vapnik-Chervonenkis dimension of $H$} 
(or VC-dimension)
is the maximum size of a vertex subset shattered by $H$.

\theoremstyle{definition}
\newtheorem*{fiberdef}{Definition}
\theoremstyle{plain}

\begin{fiberdef}
A \emph{fiber bundle} is a tuple $(B, \gamma, F)$ such that $B$ is a hypergraph, $F$ is a finite set,
and $\gamma : V(B) \rightarrow 2^{2^F}$.  That is, $\gamma$ maps vertices of $B$ to collections of subsets of $F$,
which we can think of as hypergraphs on vertex set $F$.
The hypergraph $B$ is called the \emph{base hypergraph} of the
bundle and $F$ is the \emph{fiber} of the bundle.  For a vertex $b \in V(B)$, the hypergraph
$\gamma(b)$ is called the \emph{fiber over $b$}.
\end{fiberdef}

We should think about a fiber bundle as taking a base hypergraph and putting a hypergraph
``on top'' of each base vertex.
There is one canonical example of a fiber bundle.
Given a hypergraph $B$, define the
\emph{neighborhood bundle of $B$} to be the bundle $(B, \gamma, F)$ where
$F = V(B)$ and $\gamma$ maps $b \in V(B)$ to $\left\{ A \subseteq F : A \cup \left\{ b \right\} \in E(B) \right\}$.

Why define and use the language of fiber bundles?  We can consider that in some sense fiber
bundles are a generalization of directed graphs to hypergraphs, where we think of $\gamma(x)$
as the ``out-neighborhood'' of $x$.  In the neighborhood bundle, $\gamma(x)$ is related to the
neighbors of $x$ so we can consider the neighborhood bundle as some sort of directed analogue of the
undirected hypergraph $B$, where each edge is directed ``both ways''.
By thinking of the ``out-neighborhood'' of $x$ as $\gamma(x)$ and not requiring
any dependency between $\gamma(x)$ and $\gamma(y)$ for $x \neq y$, we have no dependency between
the neighborhood of $x$ and the neighborhood of $y$, which is one of the defining differences
between directed and undirected graphs.  Note that the definition of a fiber bundle differs
from the usual definition of \emph{directed hypergraph} used in the literature, which is the
reason we use the term ``fiber bundle'' instead of ``directed hypergraph.''

\newcommand{\rb}{r_B}
\newcommand{\rg}{r_{\gamma}}

A fiber bundle $(B,\gamma,F)$ is \emph{$(\rb,\rg)$-uniform} if $B$ is an $\rb$-uniform hypergraph
and $\gamma(b)$ is an $\rg$-uniform hypergraph for each $b \in V(B)$.
Given $X \subseteq V(B)$, the \emph{section of $X$} is the hypergraph with vertex set $F$
and edges $\cap_{x \in X} \gamma(x)$.  In other words, the section of $X$ is the collection 
of subsets of $F$ that 
appear in the fiber over $x$ for every $x \in X$.  Motivated by a definition of \L{}uczak and Thomass\'{e}
\cite{cr-luczak10}, we define the \emph{$H$-dimension of a fiber bundle}.
Let $H$ be a hypergraph and define $\vc_H(B, \gamma, F)$ to be the maximum integer $d$ such
that there exist $d$ disjoint edges $E_1, \ldots, E_d$ of $B$ (i.e. a matching) such that
for every $x_1 \in E_1, \ldots, x_d \in E_d$, the section of $\left\{ x_1, \ldots, x_d \right\}$
contains a copy of $H$.  Our definition of dimension coincides with the
definition of paired VC-dimension in \cite{cr-luczak10} when $(B,\gamma,F)$ is $(2,1)$-uniform and
$H = \left\{ \left\{ x \right\} \right\}$, the complete $1$-uniform, $1$-vertex hypergraph.

Let $A$ be an $r$-uniform hypergraph.  Our method of proving an upper bound on the
chromatic threshold of the family of $A$-free hypergraphs, used in \theoremsusingbundle,
is the following.  Let $G$ be an $A$-free $r$-uniform hypergraph with minimum degree
at least $c \binom{\left| V(G) \right|}{r-1}$.
We now need to show that $G$ has bounded chromatic number, which we do in two steps.
Let $(G,\gamma,F)$ be the neighborhood bundle
of $G$.  First, we show that the dimension of $(G, \gamma, F)$
is bounded by showing that if the dimension is large then we can find $A$ as a subhypergraph.
Then, given that $\dim_H(G,\gamma,F)$ is bounded, we use the following theorem to bound the chromatic number of $G$.
In most applications, we will let $H$ be an $(r-1)$-uniform, $(r-1)$-partite hypergraph.

\newtheorem{coloringthm}[thmctr]{Theorem}
\begin{coloringthm} \label{coloringthm}
Let $\rb \geq 2$, $\rg \geq 1$, $d \in \mathbb{Z}^{+}$, $0 < \epsilon < 1$, and $H$ be an
$\rg$-uniform hypergraph with zero Tur\'{a}n density.  Then there exists constants $K_1 = K_1(\rb,\rg,d,\epsilon,H)$
and $K_2 = K_2(\rb,\rg,d,\epsilon,H)$ such that the following holds.
Let $(B,\gamma,F)$ be any $(\rb,\rg)$-uniform fiber bundle where $\vc_H(B,\gamma,F) < d$ and for all $b \in V(B)$,
\begin{align*}
\left| \gamma(b) \right|  \geq \epsilon \binom{\left| F \right|}{\rg}.
\end{align*}
If $\left| F \right| \geq K_1$, then $\chi(B) \leq K_2$.
\end{coloringthm}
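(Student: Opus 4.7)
The plan is a two-step reduction that first converts the density hypothesis on fibers into the existence of many embedded copies of $H$, and then uses the dimension bound to color $V(B)$ via few induced subhypergraphs of bounded chromatic number. Let $k=|V(H)|$. Since $\pi(H)=0$, the Erd\H{o}s--Simonovits supersaturation theorem yields a constant $\mu=\mu(H,\epsilon)>0$ such that, provided $|F|$ is sufficiently large, any $r_\gamma$-uniform hypergraph on $F$ with at least $\epsilon\binom{|F|}{r_\gamma}$ edges contains at least $\mu|F|^{k}$ injective embeddings of $H$. Hence each fiber $\gamma(b)$ contains an embedding set $\mathcal{D}(b)\subseteq F^{k}$ of size at least $\mu|F|^{k}$, replacing the abstract density condition by a concrete reservoir of $H$-copies.

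For $\phi\in F^{k}$ let $A_{\phi}=\{b\in V(B):\phi\in\mathcal{D}(b)\}$. The key reduction is that $B[A_{\phi}]$ has matching number at most $d-1$ for every $\phi$: if $E_{1},\ldots,E_{d}$ were a matching in $B[A_{\phi}]$, then for every transversal $(x_{1},\ldots,x_{d})\in E_{1}\times\cdots\times E_{d}$ we would have $\phi\in\bigcap_{i=1}^{d}\mathcal{D}(x_{i})$, exhibiting $\phi$ as a common embedding of $H$ into $\bigcap_{i=1}^{d}\gamma(x_{i})$ and witnessing $\dim_{H}(B,\gamma,F)\geq d$, a contradiction. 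Since a matching of size at most $d-1$ produces a vertex cover of size at most $r_{b}(d-1)$, we get $\chi(B[A_{\phi}])\leq r_{b}(d-1)+1$. To prove the theorem it therefore suffices to cover $V(B)$ by a bounded number $C=C(r_{b},r_{\gamma},d,\epsilon,H)$ of sets of the form $A_{\phi_{i}}$, giving $\chi(B)\leq C\bigl(r_{b}(d-1)+1\bigr)$.

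Producing a constant-size cover is the main difficulty. The natural tool is an $\epsilon$-net argument applied to the set system $\mathcal{F}=\{\mathcal{D}(b):b\in V(B)\}$ on $F^{k}$: every set in $\mathcal{F}$ has density at least $\mu$, so a hitting set of size $O(D\mu^{-1}\log\mu^{-1})$ exists whenever the VC-dimension of $\mathcal{F}$ is at most $D$. The essential technical obstacle is bounding $D$ by a constant using only $\dim_{H}(B,\gamma,F)<d$: a large shattered set in $F^{k}$ produces many vertices with prescribed $H$-embedding patterns, but the hypothesis concerns matchings in $B$, so some additional structure coming from the edges of $B$ is needed to convert shattering into a matching of $d$ edges whose transversals share a common embedding. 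My plan to bridge this gap is to pass to an auxiliary structure by sampling a set $S\subseteq F$ of constant size, to partition $V(B)$ by the trace $\gamma(b)|_{S}$ (yielding at most $2^{\binom{|S|}{r_{\gamma}}}$ classes), and to apply the matching-number argument within every trace class whose trace itself already contains $H$ (which happens, by the zero Tur\'an density of $H$, as soon as that trace has density $\geq\epsilon/2$ and $|S|$ is large enough). The technical heart of the proof is then to show that the vertices lying in sparse trace classes can also be controlled --- via an iterated or refined sampling argument combined with the dimension bound --- so that the total chromatic number remains bounded independently of $|V(B)|$.
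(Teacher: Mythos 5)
Your plan takes a genuinely different route from the paper: where the paper runs an iterated density-increment argument (\`a la the regularity lemma) on partitions of $V(B)\times\binom{F}{r_\gamma}$, combined with a merge-and-induct step for heavily overlapping fibers, you propose a one-shot supersaturation plus $\epsilon$-net cover. Your first two steps are correct and clean: supersaturation with $\pi(H)=0$ does give $|\mathcal{D}(b)|\geq\mu|F|^k$, and the observation that $B[A_\phi]$ has matching number at most $d-1$ (and hence $\chi(B[A_\phi])\leq r_b(d-1)+1$) is a nice, sound reduction. If you could produce a bounded cover of $V(B)$ by sets $A_{\phi_i}$, the theorem would indeed follow.

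But that covering step is a genuine gap, and you are honest that it is. The issue is exactly the one you identify: the hypothesis $\dim_H(B,\gamma,F)<d$ constrains matchings in $B$, not the combinatorics of the set system $\{\mathcal{D}(b)\}$ on $F^k$, so there is no a priori bound on the VC-dimension of that system in terms of $d$, $r_b$, $r_\gamma$, $\epsilon$, $H$. The trace-class workaround does not repair this. For a fixed sample $S$, every $b$ whose trace on $S$ has density $\geq\epsilon/2$ lands in a class whose common trace contains $H$ (good), but there will in general be a positive fraction of $b$'s with low-density traces, and you have no handle on them. Repeating with fresh random samples shrinks the uncovered set geometrically, but you would need $\Theta(\log|V(B)|)$ rounds to cover everything, which is not a constant. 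The sentence ``vertices lying in sparse trace classes can also be controlled via an iterated or refined sampling argument combined with the dimension bound'' is precisely where a proof would have to appear, and nothing in the plan makes the dimension bound bite on those residual vertices. This is where the paper's machinery does real work: the density increment refines \emph{both} the vertex partition and the partition of $\binom{F}{r_\gamma}$, tracking a density parameter that strictly increases at each refinement, and the independent case of the refinement absorbs exactly the vertices your scheme leaves stranded; separately, the reduction to Condition~2 (small pairwise fiber overlap within an edge) by merging vertices and inducting on $r_b$ is needed to make the increment argument sound, and there is no analogue of that reduction in your plan. Without something playing the role of both the increment and the merge, the cover you need does not materialize.
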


The above theorem is sufficent for our purposes, but our proof of Theorem~\ref{coloringthm} proves
something slightly stronger.  The conclusion of the above theorem can be reworded to say that 
either $F$ is small, the chromatic number of $B$ is bounded,
or $\dim_H(B,\gamma,F)$ is large, which means that we can find $d$ hyperedges $E_1, \ldots, E_d$ such that
every section of $x_1 \in E_1, \ldots, x_d \in E_d$ contains a copy of $H$.  In fact, the proof shows that
if $F$ is large and the chromatic number of $B$ is large,
we can guarantee not only one copy of $H$ but at least $\Omega(\left| F \right|^h)$ copies of $H$ in each section,
where $h$ is the number of vertices in $H$.

We conjecture a similar statement for all $\rg$-uniform hypergraphs $H$, instead
of just those hypergraphs with a Tur\'{a}n density of zero.

\newtheorem{coloringconj}[thmctr]{Conjecture}
\begin{coloringconj} \label{coloringconj}
Let $\rb \geq 2$, $\rg \geq 1$, $d \in \mathbb{Z}^{+}$, $0 < \epsilon < 1$, and $H$ be an $\rg$-uniform hypergraph.
Then there exists a constants $K_1 = K_1(\rb,\rg,d,\epsilon,H)$ and
$K_2 = K_2(\rb,\rg,d,\epsilon,H)$ such that the following holds.
Let $(B,\gamma,F)$ be any $(\rb,\rg)$-uniform fiber bundle where $\vc_H(B,\gamma,F) < d$ and for all $b \in V(B)$,
\begin{align*}
\left| \gamma(b) \right|  \geq \left( \pi(H) + \epsilon \right) \binom{\left| F \right|}{\rg}.
\end{align*}
If $\left| F \right| \geq K_1$, then $\chi(B) \leq K_2$.
\end{coloringconj}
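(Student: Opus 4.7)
The plan is to mirror the strategy behind Theorem~\ref{coloringthm}: assume for contradiction that $|F| \geq K_1$ and $\chi(B) > K_2$, and iteratively construct a matching $E_1, \ldots, E_d$ in $B$ witnessing $\vc_H(B,\gamma,F) \geq d$, using Erd\H{o}s--Simonovits supersaturation in place of the zero Tur\'{a}n density hypothesis. Writing $h = |V(H)|$, the bound $|\gamma(b)| \geq (\pi(H)+\epsilon)\binom{|F|}{\rg}$ together with supersaturation gives $\Omega_{\epsilon,H}(|F|^{h})$ copies of $H$ inside each fiber $\gamma(b)$. Hence a uniformly random ordered $h$-tuple from $F$ spans a copy of $H$ lying in $\gamma(b)$ with probability at least some $c(\epsilon,H) > 0$, and averaging over $b \in V(B)$ produces copies of $H$ on $F$ that are ``popular,'' i.e.\ appear in a positive fraction of all fibers.

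Next I would carry out the iterative step: given edges $E_1,\ldots,E_i$ such that every transversal $(x_1,\ldots,x_i) \in E_1 \times \cdots \times E_i$ has its section $\gamma(x_1) \cap \cdots \cap \gamma(x_i)$ containing a copy of $H$, delete $\bigcup_j V(E_j)$ from $B$. Since $i \leq d-1$ is bounded and $\chi(B)$ is assumed huge, the remaining sub-bundle retains very large chromatic number and therefore contains many edges. A union bound over the (bounded) number of transversals of $E_1 \times \cdots \times E_i$, combined with the popularity argument, should let me select an edge $E_{i+1}$ in the remainder with the property that for every $x_{i+1} \in E_{i+1}$, each extended transversal section still contains a copy of $H$. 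Running the induction for $d$ steps produces the required matching, contradicting $\vc_H(B,\gamma,F) < d$.

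The hard part, and apparently the reason the statement is only conjectural, is the density loss under intersection. When $\pi(H) > 0$, the section $\gamma(x_1) \cap \cdots \cap \gamma(x_i)$ need not have density exceeding $\pi(H)+\epsilon$; under a naive independence heuristic its density would be roughly $(\pi(H)+\epsilon)^i$, which already falls below the Tur\'{a}n threshold for $i \geq 2$, so one cannot directly reapply supersaturation to the sections in the inductive step. This is precisely the obstacle that does not arise in Theorem~\ref{coloringthm}: when $\pi(H)=0$, \emph{any} positive density of a section forces a copy of $H$. Bridging the gap will likely require a stability-flavored refinement, for instance a dependent random choice step that passes to a large sub-bundle on which the fibers are highly correlated on a common subset $F' \subseteq F$, combined with an Erd\H{o}s--Simonovits stability theorem for $H$, so that a positive fraction of transversal sections reliably inherit density above $\pi(H) + \epsilon/2$ rather than collapsing to a near-extremal configuration on $F'$ that still contains $H$. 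Making this correlation step work uniformly over all $H$ is, in my view, the principal obstacle.
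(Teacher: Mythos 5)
This statement is Conjecture~\ref{coloringconj} in the paper, and the paper does \emph{not} prove it; the authors establish only the special case $\pi(H)=0$ (Theorem~\ref{coloringthm}) and leave the general statement open. So there is no ``paper proof'' for you to have rediscovered or diverged from, and your explicit acknowledgment that your sketch does not close is the right stance. Your diagnosis of the obstacle is accurate and matches the structural bottleneck in the paper's own argument. In the proof of Theorem~\ref{coloringthm} (via Lemma~\ref{partitionlem} and Corollary~\ref{refinementlem}), the sections $T_i$ shed density in each refinement step, bottoming out at a small positive fraction $\cSforceF = \cRefineS^{1/\cBoost}$ of $\binom{|F|}{\rg}$; the constant $\cMinF$ is then tuned so that \emph{any} subset of that density contains a copy of $H$, which is exactly the zero--Tur\'an-density assumption. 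When $\pi(H)>0$ this degradation drops the sections below the Tur\'an threshold after a bounded number of steps, so supersaturation can no longer be reapplied, which is the density-loss-under-intersection phenomenon you describe.

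Two smaller remarks. First, your sketch follows the naive greedy construction described as an ``insightful attempt'' in Section~\ref{vcmotivation}, rather than the actual density-increment/partition-refinement machinery of Sections~\ref{vcdimfullproof}--\ref{vcseclargeoverlap}; the bottleneck is the same in either framing, but be aware that the paper replaces the greedy argument with the refinement argument precisely because the greedy version alone is insufficient even when $\pi(H)=0$ (one needs to handle edges with large fiber overlap separately, via Theorem~\ref{coloringwithone}). Second, the stability/dependent-random-choice route you gesture at is a reasonable candidate but would have to overcome the fact that not all $\rg$-uniform $H$ with positive Tur\'an density have usable stability theorems, and that sections are intersections of $\rb^i$ sets that need not align with any single near-extremal configuration. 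So your proposal correctly identifies the open problem and the obstruction, but, as you say, does not constitute a proof; neither does the paper claim one.
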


The motivation behind defining and using the language of fiber bundles rather than
using the language of hypergraphs is that in the course of the proof of 
Theorem~\ref{coloringthm}, we will modify $B$ and $\gamma$ and apply induction.
As mentioned above, fiber bundles can be thought of as a directed version of a hypergraph.
When applying Theorem~\ref{coloringthm} in \sectionsusingbundle,
we start with the neighborhood bundle, which carries no ``extra'' information beyond just the hypergraph $B$.
But if we tried to prove Theorem~\ref{coloringthm} in the language of hypergraphs, we would run into trouble
when we needed to modify $\gamma$.  In the neighborhood bundle, $\gamma$ is related
to the neighborhood of a vertex and if we restricted ourselves to neighborhood bundles
or just used the language of hypergraphs, modifying $\gamma(x)$ would imply that some
$\gamma(y)$'s would change at the same time.  The notion of a fiber bundle allows us
to change the ``out-neighborhood'' of $x$ independently of changing
the ``out-neighborhood'' of $y \neq x$, and this power is critical in the proof of
Theorem~\ref{coloringthm}.

\section{Chromatic threshold for near $r$-partite hypergraphs} \label{secChromNearr}

In this section we show an application of Theorem~\ref{coloringthm} by proving Theorem~\ref{nearkchromatic}.  
Recall that $H$ is an $r$-uniform, near $r$-partite hypergraph with near $r$-partition $V_1, \dots, V_r$ 
such that every component of $H[V_1]$ is partite-extendible to $V_2\cup\dots\cup V_r$. 
Fix $\epsilon > 0$ and 
let $G$ be an $n$-vertex, $r$-uniform, $H$-free hypergraph with
$\delta(G) \geq \epsilon \binom{n}{r-1}$.  We would like to use Theorem~\ref{coloringthm}
to bound the chromatic number of $G$, so we need to choose an appropriate bundle.  We will not use
the neighborhood bundle of $G$, but a closely related bundle.
Once we have defined this bundle, we show it has bounded dimension by proving that if the
dimension is large then we can find a copy of $H$ in $G$.

\begin{proof}[Proof of Theorem~\ref{nearkchromatic}]
Let $H$ be an $r$-uniform, near $r$-partite, $h$-vertex hypergraph and let $\epsilon > 0$ be fixed.
Let $V_1, \ldots, V_r$ be a near $r$-partition of $H$ and assume every component of $H[V_1]$ is
partite-extendible to $V_2 \cup \ldots \cup V_r$.  Let
\begin{align*}
  d = |V_1|.
\end{align*}
Let $G$ be an $n$-vertex, $H$-free hypergraph with
$\delta(G) \geq \epsilon \binom{n}{r-1}$.  We need to show that
the chromatic number of $G$ is bounded by a constant depending only on $\epsilon$
and $H$.

First, choose a partition $X_1, \ldots, X_r$ of $V(G)$ such that the sizes of $X_1, \ldots, X_r$ are
as equal as possible and for every $x \in V(G)$ the number of edges containing $x$ and one vertex
from each $X_i$ is at least $\frac{1}{2r^r} \epsilon \binom{n}{r-1}$.  (Almost every
nearly-equitable partition has this property.)  We will show how to bound the chromatic number of
$G[X_1]$; the same argument can be applied to bound the chromatic number of each $G[X_i]$ and thus
the chromatic number of $G$.

Define the $(r,r-1)$-uniform fiber bundle $(B,\gamma,F)$ as follows.
Let $B = G[X_1]$, let $F = X_2 \cup \ldots \cup X_r$, and for $x \in X_1$ define
\begin{align*}
\gamma(x) = \left\{ \left\{ x_2, \ldots, x_r \right\} \subseteq F : x_2 \in X_2, \ldots, x_r \in X_r, 
          \left\{ x, x_2, \ldots, x_r \right\} \in G \right\}.
\end{align*}
Then $\gamma(x)$ has size at least $\frac{1}{2r^r} \epsilon \binom{n}{r-1}$.  Let $L$ be the complete
$(r-1)$-uniform, $(r-1)$-partite hypergraph on $(rh)^h$ vertices with color classes of (nearly) equal sizes.  
Using that the Tur\'{a}n density of $L$ 
is zero, 
we apply Theorem~\ref{coloringthm} to show that there exists constants 
$K_1 = K_1(r,\epsilon,H)$ and $K_2 = K_2(r,\epsilon,H)$ such that one of the
following holds:  either  $\left| F \right| \leq K_1$, $\chi(B) \leq K_2$, or $\vc_L(B,\gamma,F) \geq d$.
Since $\left| F \right| = (1-1/r) \left| V(G) \right|$, if $\left| F \right| \leq K_1$ then $|V(G)|<K_1\left(\frac{r}{r-1}\right)$; 
therefore, if either of the first
two possibilities occur then the chromatic number of $G[X_1]$ is bounded.  We may therefore assume that $\vc_L(B,\gamma,F) \geq d$.  

\begin{figure}
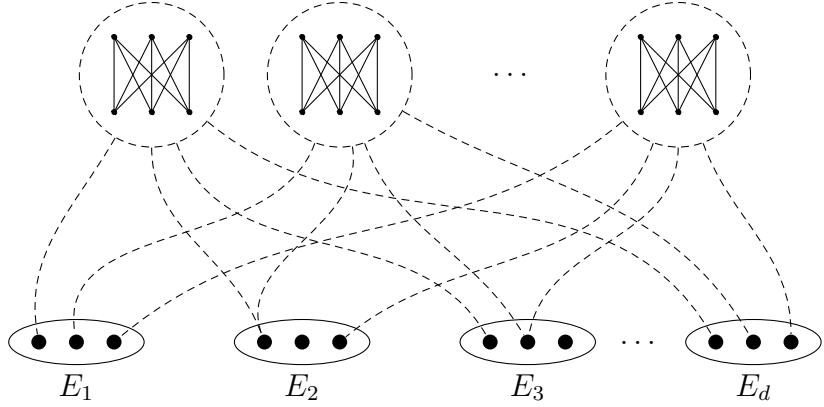

\begin{center}
\largedimfigure
\end{center}
\caption{The structure guaranteed by dimension $d$.}
\label{nearkfigure}
\end{figure}

We now show that if $\vc_L(B,\gamma,F)\geq d$ then 
$G$ contains a copy of $H$, which follows from the definition of near
$r$-partite and partite-extendible.  Since $\vc_L(B,\gamma,F) \geq d$, there are $d$ edges $E_1,
\ldots, E_d$ such that for each $x_1 \in E_1, \ldots, x_d \in E_d$, we have that $\gamma(x_1) \cap
\ldots \cap \gamma(x_d)$ contains a copy of $L$; see Figure~\ref{nearkfigure}.  Since $h = \left| V(H) \right|$, from each $\gamma(x_1)
\cap \ldots \cap \gamma(x_d)$ we can pick a copy of the complete $(r-1)$-uniform, $(r-1)$-partite
hypergraph on $h$ vertices whose color classes are of nearly equal size 
so that all these copies are vertex disjoint.  Assume $V_1 = A_1 \cup
\ldots \cup A_{\ell} \cup \left\{ a_{\ell+1} \right\} \cup \ldots \cup \left\{ a_{\ell'} \right\}$,
where $A_1, \ldots, A_{\ell}$ are the special edges of $H$.  Because $\ell\leq\ell'\leq d$, we can embed a copy of $H$ in $G$ by
mapping $A_i$ to $E_i$ for $1 \leq i \leq \ell$, mapping $a_i$ to any vertex in $E_i$ for $\ell+1
\leq i \leq \ell'$, and mapping the components of $H|_{V_2 \cup \ldots \cup V_r}$ to the complete
$(r-1)$-uniform, $(r-1)$-partite hypergraphs as follows.

Consider some component $C$ in $H|_{V_2 \cup \ldots \cup V_r}$.  Any such $C$ is an $(r-1)$-uniform,
$(r-1)$-partite hypergraph on at most $h$ vertices.  Let $D_1, \ldots, D_{\ell'}$ be the components of
$H[V_1]$; $D_i$ is either one of the special edges $A_1, \ldots, A_{\ell}$ or $D_i$ consists only of
the vertex ${a_i}$ for some $\ell+ 1 \leq i \leq \ell'$.  Since $V(D_i)$ is partite-extendible to
$V_2 \cup \ldots \cup V_r$, edges in $C$ extend to at most one vertex $z_i \in D_i$.  Since vertices
in $V_1$ are embedded to vertices in $E_1, \ldots, E_d$, this means that $C$ must be embedded in
$\gamma(x_1) \cap \ldots \cap \gamma(x_d)$ for some $x_i \in E_i$.  It is crucial that $C$ does not
need to be embedded in $\gamma(x) \cap \gamma(y)$ for $x \neq y \in E_i$; this is what is guaranteed
by the definition of partite-extendible.  Embedding $C$ is possible since $\gamma(x_1) \cap \ldots
\cap \gamma(x_d)$ contains a complete $(r-1)$-uniform, $(r-1)$-partite hypergraph on $h$ vertices
and $h = |V(H)|$ (so even if more than one component is embedded in the same $\gamma(x_1) \cap
\ldots \cap \gamma(x_d)$, there is enough room for both of them.)
\end{proof}

\section{Coloring hypergraphs with bounded dimension} \label{secpartitionproof}

\theoremstyle{definition}
\newtheorem*{condition1}{Condition 1}
\theoremstyle{plain}

\newcommand{\cBoost}{\eta}
\newcommand{\cGreedy}[1]{\psi_{#1}}
\newcommand{\cRefineS}{\alpha}
\newcommand{\cSforceF}{\beta}
\newcommand{\cMinF}{K_1}
\newcommand{\cChromaticBound}{K_2}

In this section, we will prove Theorem~\ref{coloringthm}.  To prove Theorem~\ref{coloringthm}, given
a fiber bundle $(B,\gamma,F)$ satisfying the conditions of the theorem, we must show how to produce
a proper coloring of $B$ with a bounded number of colors.  We do this via a partition refinement
strategy.  Below, we give an algorithm to refine a partition of $(B,\gamma,F)$ (a partition is
formally defined below).  The algorithm will increase a density measure (also defined below) by a
constant amount and add a constant number of new parts, so the refinement will halt after a constant
number of iterations.  Each part of the resulting partition will either correspond to an independent
set in $B$ or to a vertex set $X$ where $B[X]$ has a maximal matching of bounded size (so $B[X]$ has
bounded chromatic number), therefore producing a proper coloring of $B$ with a bounded number of
colors.

Throughout this section, fix $\rb \geq 2$, $\rg \geq 1$, $d \in \mathbb{Z}^{+}$, $0 < \epsilon <
\frac{1}{4} \rb^{-d}$, and $H$ an $\rg$-uniform hypergraph with zero Tur\'{a}n density.

\begin{condition1}
Let $(B,\gamma,F)$ be an $(\rb,\rg)$-uniform fiber bundle for which $\vc_H(B,\gamma,F) < d$ and if
$b \in V(B)$, then $\left| \gamma(b) \right| \geq \epsilon \binom{\left| F \right|}{\rg}$.
\end{condition1}

Define the following constants.

\begin{align*}
  \cRefineS = \frac{1}{1000} \left(\frac{\epsilon}{4^{\rb^d + 1}}\right)^{d+1},   \qquad \cBoost = \frac{1}{4} \epsilon^2 \cRefineS,
  \qquad \cSforceF = \cRefineS^{1/\cBoost}, \qquad \cChromaticBound = \left\lceil \rb d(\rb^d+2)^{1/\cBoost} \right\rceil.
\end{align*}
Next, pick $\cMinF$ large enough so that if $\left| F \right| \geq \cMinF$ and $S \subseteq \binom{F}{\rg}$ with
$\left| S \right| \geq \alpha \beta \epsilon \binom{\left| F \right|}{\rg}$, then $S$ contains a copy of $H$.

If $(B,\gamma,F)$ is a fiber bundle, a \emph{partition} $P$ of $(B,\gamma,F)$ is a family
$P = \{ (X_1,S_1), \linebreak[1] \ldots, (X_p,S_p) \}$ such that $X_1, \ldots, X_p$ is a partition of $V(B)$
and $S_1, \ldots, S_p$ is a partition of $\binom{F}{\rg}$, where we allow $X_i = \emptyset$ or $S_i = \emptyset$.
A partition $Q$ is a refinement of a partition $P$ if
for each $(X,S) \in P$, there exist $(Y_1,T_1), \ldots, (Y_q,T_q) \in Q$ such that $X = \cup Y_i$ and $S = \cup T_i$.
For $X \subseteq V(B)$ and $S \subseteq 2^F$, the \emph{density of $(X,S)$} is
\begin{align*}
d(X,S) = \begin{cases}
1 & S = \emptyset \text{ or } X = \emptyset, \\
\min \left\{ \frac{\left| \gamma(x) \cap S \right|}{\left| S \right|} : x \in X \right\} & \text{otherwise},
\end{cases}
\end{align*}
and define
\begin{align*}
d(P) = \min\left\{  d(X,S) : (X,S) \in P \right\}.
\end{align*}
A partition $P$ is a \emph{partial coloring} if for every $(X,\emptyset) \in P$
we have that $B[X]$ is independent.
The \emph{rank} of a partition $P$ is
the minimum of $\left| S \right|$ over all $(X,S) \in P$ with $S \neq \emptyset$.

The key lemma in this section is the following.

\newtheorem{partitionlem}[thmctr]{Lemma}
\begin{partitionlem} \label{partitionlem}
Let $(B,\gamma,F)$ be a fiber bundle satisfying Condition~1 and $\left| F \right| \geq \cMinF$.
Let $X \subseteq V(B)$ and $S \subseteq \binom{F}{\rg}$ with $X \neq \emptyset$, $d(X,S) \geq \epsilon$, and
$\left| S \right| \geq \cSforceF \binom{\left|  F  \right|}{\rg}$.
Then there exists a partition $Y_1, \ldots, Y_q,Z$ of $X$
and a partition $T_1, \ldots, T_q$ of $S$ such that $q \leq \rb^d+1$ and
\begin{itemize}
\item $\left| T_i \right| \geq \cRefineS \left| S \right|$,
\item $d(Y_i,T_i) \geq \min \left\{ 1, \cBoost + d(X,S)\right\}$,
\item $B[Z]$ is independent.
\end{itemize}
\end{partitionlem}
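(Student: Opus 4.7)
The plan is to implement the three-step strategy outlined in Section~\ref{vcmotivation}: we greedily build a matching $E_1,\ldots,E_k$ in $B[X]$ while maintaining large sections of $S$; we use Condition~2 to convert the section structure into a genuine partition of $S$; and we extract the density increment from the failure of the greedy procedure to extend. Writing $S_{x_1,\ldots,x_k} := \gamma(x_1) \cap \cdots \cap \gamma(x_k) \cap S$, we greedily pick disjoint edges $E_1, E_2,\ldots$ of $B[X]$ while preserving the invariant $|S_{x_1,\ldots,x_k}| \geq \tau_k |S|$ for every tuple $(x_1,\ldots,x_k) \in E_1 \times \cdots \times E_k$, where the thresholds $\tau_k$ decrease roughly like $(\epsilon/2)^k$. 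Reaching $k = d$ would force a copy of $H$ inside every level-$d$ section (by the choice of $\cMinF$ and $\cSforceF$), contradicting $\vc_H(B,\gamma,F) < d$; hence the greedy halts at some $k < d$.

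For the partition of $S$, observe that the $\rb^k \leq \rb^d$ sections have pairwise intersections contained in some $\gamma(x_i) \cap \gamma(y_i)$ with $x_i \neq y_i \in E_i$, hence of size at most $\cOver \binom{|F|}{\rg}$ by Condition~2. Since $\cOver$ is chosen so that $\rb^{2d}\cOver \binom{|F|}{\rg} \ll \alpha|S|$, the sections almost partition $S$. Fix an ordering of the tuples, let $T_j$ be the $j$th section minus the earlier ones, and treat the leftover $S \setminus \bigcup_j S_{x_1,\ldots,x_k}$ as an extra piece, absorbing it into one $T_{j_0}$ if it is small. The thresholds are set so that $\tau_k \geq \alpha$ for all $k \leq d$, so each remaining $T_j$ automatically has $|T_j| \geq \alpha|S|$, and the total number of pieces is at most $\rb^d+1$.

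For the partition of $X$, call $y \in X \setminus \bigcup_i E_i$ \emph{extensible} if $|\gamma(y) \cap S_{x_1,\ldots,x_k}| \geq \tau_{k+1}|S|$ for every tuple, and let $Z$ consist of these vertices. If $B[Z]$ contained an edge $E$, then setting $E_{k+1} := E$ would extend the greedy procedure, so $B[Z]$ is independent. For a non-extensible $y$ there is a part $T_{j^*}$ of size $\geq \alpha|S|$ on which $|\gamma(y) \cap T_{j^*}| \leq \tau_{k+1}|S|$; since $|\gamma(y) \cap S| \geq d(X,S)|S|$ and the remaining pieces cover at most $(1-\alpha)|S|$, some $T_{j'}$ with $j' \neq j^*$ satisfies $|\gamma(y)\cap T_{j'}|/|T_{j'}| \geq (d(X,S)-\tau_{k+1})/(1-\alpha) \geq d(X,S)+\eta$, and we assign $y$ to $Y_{j'}$. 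For $y \in E_i$, any part $T_{j'}$ associated to a tuple with $x_i = y$ is contained in $\gamma(y)$ and so has density $1$; put $y$ there.

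The main obstacle is a tight balance between competing parameter constraints. We simultaneously need $\tau_d$ large enough that level-$d$ sections of $S$ force a copy of $H$ (for the greedy contradiction); $\tau_k \geq \alpha$ for every $k \leq d$ so that each section is large enough to serve as a valid part $T_j$; the averaging inequality $\alpha\epsilon - \tau_{k+1} \geq \eta(1-\alpha)$ to deliver the density increment $\eta$; and $\cOver$ small enough that the combined overlap $\rb^{2d}\cOver\binom{|F|}{\rg}$ is negligible next to $\alpha|S|$. The definitions of $\cRefineS,\cBoost,\cSforceF,\cOver$ given at the start of the section are set up so that all four conditions hold together, which is the real technical content of the lemma.
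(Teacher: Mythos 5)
Your overall plan follows the paper's three-step strategy (greedy matching, section decomposition, density increment via averaging), but the averaging step contains a genuine gap. You bound $|\gamma(y) \cap T_{j^*}| \leq \tau_{k+1}|S|$ as an absolute quantity and then average against the uniform lower bound $|T_j| \geq \alpha|S|$, arriving at the required constraint $\alpha\epsilon - \tau_{k+1} \geq \eta(1-\alpha)$. This is inconsistent with your other requirement $\tau_k \geq \alpha$ for every $k \leq d$: together they force $\alpha\epsilon > \tau_{k+1} \geq \tau_d \geq \alpha$, i.e.\ $\epsilon > 1$, contradicting $0 < \epsilon < 1$. The deeper problem is that $\alpha$ must be tiny, small enough to lower-bound sections all the way down to level~$d$, but the bad section $T_{j^*}$ sits at level $m$ where the greedy halts, and there it is much bigger, of size $\gtrsim \tau_m|S|$. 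You must exploit this. The paper converts the absolute bound into a \emph{density} bound using the level-specific estimate $|T_i| \geq \frac{1}{2}\epsilon\cGreedy{m}|S|$ (its equation \eqref{eqsizeTi}), obtaining
\[
\frac{|\gamma(y) \cap T_{j^*}|}{|T_{j^*}|} \;\leq\; \frac{\epsilon\cGreedy{m+1}|S|}{\tfrac12\epsilon\cGreedy{m}|S|} \;=\; \frac{2\cGreedy{m+1}}{\cGreedy{m}} \;=\; \frac{\epsilon}{2},
\]
a constant independent of $m$. Only then is the averaging (carried out in density, not absolute, terms) applied, and the estimate $w \geq \cRefineS$ enters at the very last step, where the constant $\epsilon/2$ leaves room for the boost $\cBoost = \tfrac14\epsilon^2\cRefineS$. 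Relatedly, your ratio $\tau_{k+1}/\tau_k \approx \epsilon/2$ would yield a density bound of $\epsilon$ rather than $\epsilon/2$, leaving zero slack when $d(X,S) = \epsilon$; the paper uses $\cGreedy{m+1}/\cGreedy{m} = \epsilon/4$ precisely for this reason.

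There is also a secondary issue with your handling of the leftover $S \setminus \bigcup_j S_{x_1,\dots,x_k}$. The paper first truncates each $T_i$ (for $i \leq t$) to size at most $2\epsilon|S|$, which by Condition~1 ($\epsilon < \tfrac14\rb^{-d}$) guarantees $|T_{t+1}| \geq \tfrac12|S|$ unconditionally. Your ``absorb into $T_{j_0}$ if small'' route needs extra care: if the absorption happens at the critical index $j^* = j_0$, then $T_{j^*}$ is no longer contained in the section $S_{v_1,\dots,v_m}$, so the smallness bound on $|\gamma(y) \cap T_{j^*}|$ inherited from the greedy failure does not apply directly. This is repairable, but the truncation avoids it outright and also keeps $T_j \subseteq \gamma(x)$ for $x \in E_i$, which is needed in the other branch (when $E$ meets some $E_i$).
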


\noindent This lemma has an easy corollary.

\newtheorem{refinementlem}[thmctr]{Corollary}
\begin{refinementlem} \label{refinementlem}
Let $(B,\gamma,F)$ be a fiber bundle satisfying Condition~1 and $\left| F \right| \geq \cMinF$.
Let $P$ be a partial coloring of $(B,\gamma,F)$ where 
$P$ has rank at least $\cRefineS^{k} \binom{\left|F\right|}{\rg}$ with $k \leq \frac{1}{\cBoost}$.
Then there exists a refinement $Q$ of $P$ such that
\begin{itemize}
\item $\left| Q \right| \leq (\rb^d+2) \left| P \right|$,
\item $Q$ is also a partial coloring,
\item the rank of $Q$ is at least $\cRefineS^{k+1} \binom{\left|F\right|}{\rg}$,
\item $d(Q) \geq \min\left\{1, \cBoost + d(P)\right\}$.
\end{itemize}
\end{refinementlem}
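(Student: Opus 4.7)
The plan is to apply Lemma~\ref{partitionlem} once to each non-trivial part of $P$ and collect the pieces into $Q$. Before building the construction I would verify that each part $(X,S) \in P$ with $X \neq \emptyset$ and $S \neq \emptyset$ satisfies the hypotheses of Lemma~\ref{partitionlem}. The rank condition on $P$ gives $|S| \geq \cRefineS^k \binom{\left| F \right|}{\rg}$, and since $k \leq 1/\cBoost$ and $0 < \cRefineS < 1$ we have $\cRefineS^k \geq \cRefineS^{1/\cBoost} = \cSforceF$, which matches the lower bound on $|S|$ demanded by the lemma. The density hypothesis $d(X,S) \geq \epsilon$ follows from the invariant $d(P) \geq \epsilon$, which holds initially by Condition~1 and is preserved by the corollary itself, so it may be carried through the iteration that eventually produces the coloring asserted by Theorem~\ref{conditionalcolorthm}.

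The construction is then immediate. For each $(X,S) \in P$, if $X = \emptyset$ or $S = \emptyset$ copy the part into $Q$ unchanged; otherwise invoke Lemma~\ref{partitionlem} on $(X,S)$ to obtain a partition $Y_1, \ldots, Y_n, Z$ of $X$ with $n \leq \rb^d + 1$ and a partition $T_1, \ldots, T_n$ of $S$, and insert $(Y_1,T_1), \ldots, (Y_n,T_n)$ together with $(Z,\emptyset)$ into $Q$. By construction $Q$ refines $P$.

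Next I would check the four stated conclusions. The size bound $|Q| \leq (\rb^d + 2)|P|$ holds because each part of $P$ contributes at most $n + 1 \leq \rb^d + 2$ parts. The partial coloring property is preserved: the new parts $(Z, \emptyset)$ have $B[Z]$ independent by Lemma~\ref{partitionlem}, inherited empty parts keep their independence from $P$, and every other new part has a non-empty second coordinate, for which the partial coloring condition is vacuous. The rank bound follows from $|T_i| \geq \cRefineS |S| \geq \cRefineS \cdot \cRefineS^k \binom{\left| F \right|}{\rg} = \cRefineS^{k+1} \binom{\left| F \right|}{\rg}$. For density, $d(Y_i, T_i) \geq \min\{1, \cBoost + d(X,S)\} \geq \min\{1, \cBoost + d(P)\}$, while inherited parts with $X = \emptyset$ or $S = \emptyset$ contribute $1$ by convention, so $d(Q) \geq \min\{1, \cBoost + d(P)\}$.

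Since Lemma~\ref{partitionlem} is already established, this is essentially a bookkeeping argument and I do not expect a substantive obstacle. The only subtle point, and the reason to be careful, is tracking the invariant $d(P) \geq \epsilon$ alongside the rank hypothesis when iterating the corollary; this invariant is exactly what keeps the density hypothesis of Lemma~\ref{partitionlem} available at each step of the density-increment argument.
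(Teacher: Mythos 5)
Your proof is correct and follows the same approach as the paper: apply Lemma~\ref{partitionlem} to each non-trivial part of $P$, insert the resulting pieces (including $(Z,\emptyset)$) into $Q$, and check the four properties directly. You are slightly more careful than the paper in flagging that the hypothesis $d(X,S)\geq\epsilon$ of Lemma~\ref{partitionlem} is not explicit in the corollary's statement and is supplied by the invariant $d(P)\geq\epsilon$ carried through the iteration in Theorem~\ref{conditionalcolorthm} — a worthwhile remark, since the paper's one-line verification leaves this implicit.
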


\begin{proof}
For each pair $(X,S) \in P$ with $X \neq \emptyset$ and $S \neq \emptyset$, apply
Lemma~\ref{partitionlem}.  Since $k \leq \frac{1}{\cBoost}$, 
$\left| S \right| \geq \cRefineS^{k} \binom{\left| F \right|}{\rg} 
                \geq \cRefineS^{1/\cBoost} \binom{\left| F \right|}{\rg} 
                \geq \beta \binom{\left| F \right|}{\rg}$.
Lemma~\ref{partitionlem} produces
$Y_1, \ldots, Y_q,Z$ and $T_1, \ldots, T_q$ with $q \leq \rb^d+1$.  We replace the pair $(X,S)$ with the pairs
$(Y_1,T_1), \ldots, (Y_q,T_q), (Z,\emptyset)$.  The resulting partition satisfies all the required properties.  
\end{proof}

We can now easily prove Theorem~\ref{coloringthm}.

\begin{proof}[Proof of Theorem~\ref{coloringthm}]
By assumption, $(B,\gamma,F)$ satisfies Condition~1.
Start with the partition $P = \left\{ (V(B), \binom{F}{\rg}) \right\}$ and
apply Corollary~\ref{refinementlem} repeatedly until the partition satisfies $d(P) = 1$.  Since the
value of $d(P)$ increases by $\cBoost$ at each step, the partition is refined at most $1/\cBoost$ times,
and so
the resulting partition $P$ has at most $(\rb^d+2)^{1/\cBoost}$ parts.
Consider a part $(X,S) \in P$.
If $S = \emptyset$, then since $P$ is a partial coloring $B[X]$ must be independent, so $\chi(B[X]) = 1$.
If $S \neq \emptyset$ then because the partition was refined at most $1/\cBoost$ times 
we know that $\left| S \right| \geq \cSforceF \binom{\left| F \right|}{\rg}$, which by the choice of $\cSforceF$ and $\cMinF$
forces a copy of $H$ in $S$.  Since $d(X,S) = 1$ we must have $S \subseteq \gamma(x)$ for every $x \in X$,
so that a matching of size $d$ in $B[X]$ witnesses that $\vc_H(B,\gamma,F) \geq d$.  Therefore, the maximum size
of a matching in $B[X]$ is $d-1$. Since the size of a maximal
matching in $B[X]$ is $d-1$, it follows that $\chi(B[X]) \leq \rb(d-1)+1$.
This implies that the chromatic number of $B$ is at most $\rb d(\rb^d+2)^{1/\cBoost}$.
\end{proof}

\newcommand{\minsecd}{\delta}

All that remains is to prove Lemma~\ref{partitionlem}.  Before proving this lemma, we make
some definitions.
If $E_1, \ldots, E_t \in B$ and $S \subseteq \binom{F}{\rg}$, then
the \emph{minimum section density of $E_1, \ldots, E_t$ with respect to $S$} is
\begin{align*}
\minsecd(E_1, \ldots, E_t,S) = \min\left\{ 
    \frac{\left| \gamma(x_1) \cap \ldots \cap \gamma(x_t) \cap S \right|}{\left| S \right|}
    : x_1 \in E_1, \ldots, x_t \in E_t \right\}.
\end{align*}
Notice that if $E_1, \ldots, E_d$ are disjoint, $\minsecd(E_1, \ldots, E_d,S) > 0$, 
$S$ contains a constant fraction of $\binom{F}{\rg}$, and $F$ is large,
then $E_1, \ldots, E_d$ witness that $\vc_H(B,\gamma,F) \geq d$.
Define constants $\cGreedy{1}, \ldots, \cGreedy{d}$ recursively by $\cGreedy{1} = 1$ and 
$\cGreedy{i+1} = \frac{1}{2} 4^{-\rb^d} \epsilon \cGreedy{i}$ for $1 \leq i \leq d-1$.

\begin{proof}[Proof of Lemma~\ref{partitionlem}]
Start by greedily selecting disjoint edges $E_1, \ldots, E_i$ of $B[X]$ such that
$\minsecd(E_1, \ldots, \linebreak[1] E_i,S) \geq \epsilon \cGreedy{i}$.  
Since for every $x \in X$
\begin{align*}
\frac{\left| \gamma(x) \cap S \right|}{\left| S \right|} \geq d(X,S) \geq \epsilon \cGreedy{1},
\end{align*}
the greedy algorithm can start with any edge $E_1$ in $B[X]$.
Assume the greedy algorithm has selected
$E_1, \ldots, E_m$ with $\minsecd(E_1, \ldots, E_m,S) \geq \epsilon \cGreedy{m}$ but for every other
edge $E$ in $B[X]$
disjoint from $E_1, \ldots, E_m$, we have $\minsecd(E_1, \ldots, E_m, E,S) < \epsilon \cGreedy{m+1}$.

First, we prove that $\vc_H(B,\gamma,F) \geq m$.
Let $m' = \min\{m,d\}$.
Since $\minsecd(E_1, \ldots, E_{m'},S) \geq \epsilon \cGreedy{m'} \geq \epsilon \cGreedy{d}$, we have
that every section of $x_1 \in E_1, \ldots, x_{m'} \in E_{m'}$ has size at least 
$\epsilon \cGreedy{d} \left| S \right| \geq \epsilon \alpha |S| \geq \alpha \epsilon \beta \binom{\left| F \right|}{\rg}$.
By the choice of $\cMinF$, the section of $x_1, \ldots, x_{m'}$ contains a copy of $H$, and so 
$m' < d$ and $m'= m$. Then $E_1, \ldots, E_m$ witness that $\vc_H(B,\gamma,F) \geq m$.

We make the following definitions.

\begin{itemize}
\item Let $R_1, \ldots, R_t$ be all $\rb^m$ sections of $v_1 \in E_1, \ldots, v_m \in E_m$ intersected with $S$.

\item Now remove elements from each $R_i$ to form $T_i$ via the following steps:
  \begin{itemize}
    \item Start with $T_i = R_i$ for all $1 \leq i \leq t$.
    \item If there exists some $i \neq j$ with $T_i \cap T_j \neq \emptyset$, divide $T_i \cap T_j$
      into two sets $A$ and $B$ with size as equal as possible and remove $A$ from $T_i$ and remove $B$ from
      $T_j$.  Repeat this until $T_1, \dots, T_t$ are pairwise disjoint.
    \item Remove elements of $T_i$ arbitrarily until
    $\left| T_i \right| < 2 \epsilon \left| S \right|$.  (If $T_i$ is already smaller than $2 \epsilon \left| S \right|$,
     nothing needs to be removed.)
  \end{itemize}
\item Let $T_{t+1} = S \setminus T_1 \setminus \ldots \setminus T_t$.
\item For $1 \leq i \leq t+1$, define
 \begin{align*}
  Y_i = \left\{ x \in X : \frac{\left| \gamma(x) \cap T_i \right|}{\left| T_i \right|} 
                          \geq \min\left\{1, \cBoost + d(X,S) \right\} \right\}.
 \end{align*}
 If some $x$ appears in more than one $Y_i$, remove it from all but the least-indexed $Y_i$.
\item Let $Z = X \setminus Y_1 \setminus \ldots \setminus Y_{t+1}$.
\end{itemize}
 By the definition of $Y_i$, $d(Y_i,T_i) \geq \min\{1, \cBoost + d(X,S)\}$.
Therefore, to finish the proof we need to check that $\left| T_i \right| \geq \cRefineS \left| S \right|$ and $B[Z]$ is independent. 

\medskip
\noindent \emph{Claim 1:} $\left| T_i \right| \geq 2\cGreedy{m+1} |S| \geq \cRefineS \left| S \right|$ for all $1 \leq i \leq t+1$.

\begin{proof}
Since $\minsecd(E_1, \ldots, E_m,S) \geq \epsilon \cGreedy{m}$, each $R_i$ has size at least
$\epsilon \cGreedy{m} \left| S \right|$ so initially each $T_i$ has size at least $\epsilon
\cGreedy{m} |S|$.  Now consider how many elements are removed from $T_i$ for some fixed $i$.  For
each $j \neq i$, half of $T_i \cap T_j$ will be removed from $T_i$ so even if $T_i$ is contained
inside $T_j$, at most half of $T_i$ will be removed.  To deal with the case when $T_i \cap T_j$ is
odd, certainly the size of $T_i$ is cut down to at most one-fourth.
There are $t-1 = \rb^m  - 1 \leq
\rb^d$ of these potential removals, so after making $T_1, \dots, T_t$ disjoint,
\begin{align*}
  \left| T_i \right| \geq \frac{1}{4^{\rb^d}} \left| R_i \right| \geq \frac{\epsilon \cGreedy{m}}{4^{\rb^d}}
  \left| S \right| = 2\cGreedy{m+1} \left| S \right|.
\end{align*}
Finally,
since $\cGreedy{1} = 1$ and $m \geq 1$, $\cGreedy{m+1} < \frac{\epsilon}{4}$, we have that
$2\cGreedy{m+1} |S| < 2\epsilon |S|$, so if after making $T_1, \dots, T_t$ disjoint, $T_i$ is still
larger than $2\epsilon |S|$, cutting $T_i$ down to size $2\epsilon |S|$ still preserves that $|T_i|
\geq 2\cGreedy{m+1} |S|$.  By the choice of constants, $2\cGreedy{m+1} \geq \cRefineS$ so $|T_i| \geq
\cRefineS |S|$.

Now consider the size of $T_{t+1}$.  Since each $T_i$ with $i \leq t$ has size at most $2 \epsilon \left| S \right|$
and we assumed that $\epsilon < \frac{1}{4} t^{-1}$ in Condition~1, the set $T_{t+1}$ has at least
$\frac{1}{2} \left| S \right| \geq 2\cGreedy{m+1}|S| \geq \cRefineS \left| S \right|$ elements.
\end{proof}

\medskip
\noindent \emph{Claim 2:} $B[Z]$ is independent.

\begin{proof}
Assume $E$ is an edge in $B[Z]$.  We would like to show that
there exists some $x \in E$ and some $T_j$ such that
\begin{align} \label{eqdensityTj}
\frac{\left| \gamma(x) \cap T_j \right|}{\left| T_j \right|} \geq \min\left\{1, \cBoost + d(X,S) \right\},
\end{align}
since this would show that $x \in Y_j$, contradicting that $x \in Z$.  Assume $E$ intersects some
$E_i$ for some $1 \leq i \leq m$, with $x \in E \cap E_i$.  Since $x \in E_i$ there is a section
$R_j$ that \emph{selects} $x$, by which we mean that $R_j$ was formed by choosing $x$ from $E_i$.  Fix some
such section $R_j$ that selects $x$, in which case $R_j \subseteq
\gamma(x)$.  Then $T_j \subseteq R_j \subseteq \gamma(x)$ and $\left| \gamma(x) \cap T_j
\right|/\left| T_j \right| = 1$ so \eqref{eqdensityTj} is satisfied.

Now assume $E$ is disjoint from $E_1, \ldots, E_m$.  Since the greedy algorithm could not continue,
$\minsecd(E_1, \ldots, E_m, E, S) < \epsilon \cGreedy{m+1}$, which implies that there exists some
$v_1 \in E_1, \ldots, v_m \in E_m, x \in E$ such that
\begin{align*}
\left| \gamma(v_1) \cap \ldots \cap \gamma(v_m) \cap \gamma(x) \cap S \right| <
\epsilon \cGreedy{m+1} \left| S \right|.
\end{align*}
By the definition of $T_i$, there exists some $T_i$ such that $T_i \subseteq \gamma(v_1) \cap \ldots \cap \gamma(v_m) \cap S$.
Therefore,
\begin{align*}
\left| \gamma(x) \cap T_i \right| < \epsilon \cGreedy{m+1} \left| S \right| \leq \frac{\epsilon}{2} \left| T_i \right|,
\end{align*}
where the last inequality uses $|S| \leq \frac{1}{2\cGreedy{m+1}} |T_i|$ from Claim~1.
Assume that for every $j \neq i$, \eqref{eqdensityTj} fails.  Then
\begin{align*}
\left| \gamma(x) \cap S \right| 
&= \left| \gamma(x) \cap T_i \right| + \sum_{j \neq i} \left| \gamma(x) \cap T_j \right| 
\leq \frac{\epsilon}{2} \left| T_i \right| + 
   \sum_{j \neq i} (\cBoost + d(X,S)) \left| T_j \right|.
\end{align*}
Dividing through by $\left| \gamma(x) \cap S \right|$ we obtain
\begin{align*}
  1 \leq \frac{\epsilon}{2} \frac{\left| T_i \right|}{\left| S \right|} \frac{\left| S \right|}{\left| \gamma(x) \cap S \right|} +
   (\cBoost + d(X,S)) \left( 1 - \frac{\left| T_i \right|}{ \left| S \right|} \right) \frac{\left| S \right|}{\left| \gamma(x) \cap S \right|}.
\end{align*}
Because $\left| S \right|/\left| \gamma(x) \cap S \right| \leq \frac{1}{d(X,S)} \leq \frac{1}{\epsilon}$,
\begin{align} \label{vccolorbadeq}
1 \leq \frac{1}{2} \frac{\left| T_i \right|}{\left| S \right|} + \left(\frac{\cBoost}{\epsilon} + 1\right) \left( 1 - \frac{\left| T_i \right|}{\left| S \right|} \right).
\end{align}
Let $w = \left| T_i \right|/\left| S \right|$. The right hand side of the
above inequality is a weighted average of $\frac{1}{2}$ and
$(1+\frac{\cBoost}{\epsilon})$:
\begin{align*}
\frac{1}{2} w + \left(1 + \frac{\cBoost}{\epsilon} \right) (1 - w).
\end{align*}
Since $\frac{1}{2} < 1 + \frac{\cBoost}{\epsilon}$,
this will be maximized when $w$ is as small as possible.
By Claim~1, $w \geq \cRefineS$, and we have
\begin{align*}
\frac{1}{2} \cRefineS + \left(1 + \frac{\cBoost}{\epsilon} \right) (1 - \cRefineS) 
&< \frac{1}{2} \cRefineS + 1 + \frac{\cBoost}{\epsilon} - \cRefineS 
\leq 1 + \frac{\cBoost}{\epsilon} - \frac{1}{2} \cRefineS < 1.
\end{align*}
This implies that for any $w \geq \cRefineS$, the inequality in \eqref{vccolorbadeq} is false.
This contradiction shows that there must be some $j \neq i$ such that
$\left| \gamma(x) \cap T_j \right| / \left| T_j \right|$ is at least $\cBoost + d(X,S)$, which contradicts
that $E$ is contained in $B[Z]$.
\end{proof}
Thus $B[Z]$ is independent and the proof is complete.
\end{proof}

\section{Extremal results for critical hypergraphs}\label{extremal}

In this section, we prove Theorems~\ref{nearkextremal} and \ref{cycleiscritical}.  First, by
Lemma~\ref{notrpartite}, \C{2k+1}{r} is mono near $r$-partite.  Thus to complete the proof of
Theorem~\ref{cycleiscritical} we need only prove that \C{2k+1}{3} and \C{2k+1}{4} are stable with
respect to $T_3(n)$ and $T_4(n)$.  One tool we will use is the hypergraph removal lemma of Gowers,
Nagle, R\"{o}dl, and Skokan \cite{rrl-gowers07,rrl-nagle06,rrl-rodl04,Rodl06,rrl-tao06}.

\newtheorem{hypergraphremoval}[thmctr]{Theorem}
\begin{hypergraphremoval} \label{hypergraphremoval}
For every integer $r \geq 2$, $\epsilon > 0$, and $r$-uniform hypergraph $H$, there exists
a $\delta > 0$ such that any $r$-uniform hypergraph with at most $\delta n^{\left| V(H) \right|}$
copies of $H$ can be made $H$-free by removing at most $\epsilon n^r$ edges. \qed
\end{hypergraphremoval}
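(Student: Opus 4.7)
The plan is to follow the standard template for combinatorial removal lemmas, which reduces the statement to two heavy ingredients: a \emph{hypergraph regularity lemma} providing a structured quasirandom decomposition of any $r$-uniform hypergraph, and a corresponding \emph{hypergraph counting lemma} that asserts such decompositions contain the expected number of copies of any fixed $H$. Once both ingredients are available, the argument is a short clean-up argument; without them, one does not even know how to start.

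First, I would apply the hypergraph regularity lemma to the input hypergraph $G$, obtaining a bounded partition $V_1,\dots,V_t$ of $V(G)$ together with a hierarchical family of partitions of $s$-tuples for every $1 \le s \le r-1$, such that almost all $r$-tuples of $V(G)$ lie in blocks that are $\delta'$-regular in the appropriate hypergraph sense. The parameter $\delta'$ and the size of $t$ are chosen as a function of $\varepsilon$ and $|V(H)|$, in the usual iterated fashion.

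Second, I would clean up $G$ by deleting every edge that fails one of the following standard conditions: it lies in a block that is not sufficiently regular, or it lies in a block whose density is below a small threshold $\eta = \eta(H,\varepsilon)$, or it touches a vertex class that is too small. A routine accounting shows that the total number of deleted edges is at most $\varepsilon n^r$, provided $\delta'$ and $\eta$ are chosen small enough relative to $\varepsilon$ and $|V(H)|$. Let $G'$ denote the cleaned-up hypergraph. Third, I would invoke the counting lemma: if $G'$ still contains even one copy of $H$, then the blocks supporting that copy are regular and of density at least $\eta$ at every level, and the counting lemma forces $G'$ (and hence $G$) to contain at least $c\, n^{|V(H)|}$ copies of $H$, for some positive constant $c$ depending only on $\eta$, $\delta'$, and $H$. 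Choosing $\delta < c$ therefore forces $G'$ to be $H$-free, and since we removed at most $\varepsilon n^r$ edges to produce $G'$, the theorem follows.

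The main obstacle is setting up the regularity/counting framework in the hypergraph setting. In contrast to the graph case, where Szemerédi's regularity lemma plus the triangle counting lemma is essentially a one-page reduction, the $r$-uniform version requires a hierarchical partition scheme: one partitions the vertex set, then the pairs (for $r \ge 3$), then the triples, and so on, with regularity conditions imposed at every level so that the counting lemma can be chained up through the hierarchy. This is the technical heart of the Gowers and Nagle--R\"odl--Schacht--Skokan theory, and is where essentially all the difficulty lies; once that machinery is granted, the above reduction is completely analogous to the graph case.
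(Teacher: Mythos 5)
The paper does not prove this statement; it cites it as the hypergraph removal lemma of Gowers and of Nagle, R\"odl, Schacht, and Skokan. There is therefore no in-paper proof to compare against. Your proposal correctly identifies the standard derivation from the literature: apply the hypergraph regularity lemma, delete edges in irregular or sparse or unbalanced blocks (an $O(\varepsilon n^r)$ loss), and then use the counting lemma to show the cleaned-up hypergraph is $H$-free whenever the number of copies of $H$ is below the threshold $\delta n^{|V(H)|}$. This is the same reduction used in the cited references, and you are right that essentially all the content lies in the regularity and counting lemmas, which you do not attempt to prove. As a blind reconstruction of the cited result, your sketch is sound and matches the standard account; it would not, however, serve as a self-contained proof, since it defers the entire technical burden to the hierarchical regularity machinery. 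The paper treats the statement as a black box, so simply citing the sources, as the paper does, is the appropriate level of detail here.
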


The second tool we will use is supersaturation, proved by Erd\H{o}s and Simonovits~\cite{rrl-erdos83}.
There are several equivalent formulations of supersaturation, the one we will use is the following.

\newtheorem{supersaturation}[thmctr]{Theorem}
\begin{supersaturation}  \label{supersaturation} \cite[Corollary 2]{rrl-erdos83}
Let $K^r_{t_1,\ldots,t_r}$ be the complete $r$-uniform, $r$-partite hypergraph with part sizes $t_1, \ldots, t_r$.  Let $t = \sum t_i$.
For every $\epsilon > 0$, there exists a $\delta = \delta(r,t,\epsilon)$ such that any $r$-uniform hypergraph with at least $\epsilon n^{r}$
edges contains at least $\delta n^t$ copies of $K^r_{t_1,\ldots,t_r}$. \qed
\end{supersaturation}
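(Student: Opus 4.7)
The plan is to derive supersaturation from the zero Turán density of complete $r$-partite $r$-uniform hypergraphs (Erd\H{o}s's classical theorem that $\pi(K^r_{t_1,\ldots,t_r}) = 0$) via the standard averaging/sampling argument. First I would fix $\epsilon > 0$ and use Erd\H{o}s's theorem to choose a constant $m = m(r, t_1, \ldots, t_r, \epsilon)$ large enough that \emph{every} $r$-uniform hypergraph on $m$ vertices with at least $(\epsilon/2) \binom{m}{r}$ edges contains a copy of $K^r_{t_1,\ldots,t_r}$. This reduces the problem to a statement about small hypergraphs at the cost of ``thickening'' the density constant.

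Next I would apply a double counting argument. Let $G$ be an $r$-uniform hypergraph on $n$ vertices with $|E(G)| \geq \epsilon n^r$, and for each $m$-subset $S \subseteq V(G)$ let $e(S) = |E(G[S])|$. Summing,
\begin{align*}
\sum_{S \in \binom{V(G)}{m}} e(S) = \binom{n-r}{m-r} |E(G)| = \binom{m}{r} \binom{n}{m} \frac{|E(G)|}{\binom{n}{r}},
\end{align*}
so the average value of $e(S)$ is at least $\epsilon r! \binom{m}{r} (1 + o(1))$. A standard Markov-style argument then shows that at least a fraction $c_1 = c_1(\epsilon, r)$ of the $m$-subsets $S$ satisfy $e(S) \geq (\epsilon/2) \binom{m}{r}$ (for $n$ large). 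By the choice of $m$, each such ``dense'' $S$ contains at least one copy of $K^r_{t_1,\ldots,t_r}$.

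Finally I would count copies with multiplicity to pass back from $m$-sets to the whole vertex set. Every copy of $K^r_{t_1,\ldots,t_r}$ uses $t$ vertices and therefore sits in exactly $\binom{n-t}{m-t}$ of the $m$-subsets $S$. Hence the number of copies in $G$ is at least
\begin{align*}
\frac{c_1 \binom{n}{m}}{\binom{n-t}{m-t}} = c_1 \cdot \frac{\binom{n}{t}}{\binom{m}{t}} \geq \delta n^t,
\end{align*}
for a suitable $\delta = \delta(r, t, \epsilon)$ and $n$ sufficiently large; small $n$ can be absorbed by adjusting $\delta$. The main obstacle, and the only non-elementary input, is the black-box use of Erd\H{o}s's theorem on the Tur\'an number of complete $r$-partite $r$-uniform hypergraphs, which gives the qualitative zero-density fact needed to pick $m$; once that is in hand, the remainder is straightforward averaging. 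The step to watch carefully is keeping the averaging clean when some dense $m$-subsets might contain many more than one copy of $K^r_{t_1,\ldots,t_r}$ (this only helps), and ensuring the sampling constant $c_1$ and the final $\delta$ depend only on $r, t, \epsilon$ and not on $n$.
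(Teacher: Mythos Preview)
Your argument is correct: this is precisely the standard Erd\H{o}s--Simonovits supersaturation proof, and all the steps (the choice of $m$ via $\pi(K^r_{t_1,\ldots,t_r})=0$, the double count over $m$-subsets, and the final overcount bound $\binom{n}{m}/\binom{n-t}{m-t}=\binom{n}{t}/\binom{m}{t}$) are carried out cleanly. One small remark: since the hypothesis is $|E(G)|\ge \epsilon n^r$ rather than $\epsilon\binom{n}{r}$, the average of $e(S)$ is actually at least $\epsilon\, r!\binom{m}{r}$ exactly (no $o(1)$ needed), which only makes the Markov step easier.

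As for comparison with the paper: the paper does not give its own proof of this theorem at all. It is stated as a quoted result (Corollary~2 of Erd\H{o}s--Simonovits~\cite{rrl-erdos83}) and used as a black box. Your proposal reproduces the original argument from that reference, so there is nothing to compare.
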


For any hypergraph $H$, let $H(t)$ denote the hypergraph obtained from $H$ by blowing up each vertex into an
independent set of size $t$.
An easy extension of supersaturation is the following (see Theorem~2.2 in the survey by Keevash~\cite{keevash11}).

\newtheorem{supersaturationblowup}[thmctr]{Corollary}
\begin{supersaturationblowup} \label{supersaturationblowup}
For every $r,t \geq 2$, $\epsilon > 0$, and $r$-uniform hypergraph $H$, there exists an $n_0$ such
that if $n \geq n_0$ and $G$ is an $n$-vertex, $r$-uniform hypergraph that contains at least $\epsilon n^{\left| V(H) \right|}$
copies of $H$, then $G$ contains a copy of $H(t)$. \qed
\end{supersaturationblowup}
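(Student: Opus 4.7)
The plan is to encode the copies of $H$ in $G$ as edges of an auxiliary $h$-uniform, $h$-partite hypergraph and then invoke Theorem~\ref{supersaturation}. Write $h = |V(H)|$ and fix a labeling $v_1, \ldots, v_h$ of $V(H)$. Let $J$ be the $h$-uniform, $h$-partite hypergraph with vertex set $V_1 \cup \cdots \cup V_h$, where each $V_i$ is a separate copy of $V(G)$, and for every injective homomorphism $\phi : V(H) \to V(G)$ (i.e., every labeled copy of $H$ in $G$) place the edge $\{\phi(v_1), \ldots, \phi(v_h)\}$ in $J$, with $\phi(v_i)$ regarded as lying in $V_i$. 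Each copy of $H$ in $G$ gives rise to at least one such injection, so $J$ has at least $\epsilon n^h = \epsilon h^{-h}(hn)^h$ edges on $hn$ vertices.

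Next I would apply Theorem~\ref{supersaturation} to $J$ with $r = h$ and part sizes $t_1 = \cdots = t_h = ht$. Since $J$ has a constant fraction of the possible edges, the theorem yields a $\delta > 0$ such that, once $n$ is large enough, $J$ contains at least one (in fact $\Omega(n^{h^2 t})$) copies of $K^h_{ht, \ldots, ht}$. Such a copy is a choice of sets $T_i \subseteq V_i$ with $|T_i| = ht$ such that every transversal $\{u_1, \ldots, u_h\}$ with $u_i \in T_i$ is an edge of $J$, and is therefore witnessed by an injective homomorphism $V(H) \to V(G)$ sending $v_i$ to $u_i$.

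The main subtlety, and the reason I inflated the part sizes from $t$ to $ht$, is that $V_1, \ldots, V_h$ are copies of the same set $V(G)$, so the $T_i$, viewed as subsets of $V(G)$, may overlap. I would remove this obstacle by a greedy thinning: pick $S_1 \subseteq T_1$ of size $t$; having chosen $S_1, \ldots, S_{i-1}$, note that at most $(i-1)t \leq (h-1)t$ vertices of $V(G)$ have been used, while $|T_i| = ht$, so there remain at least $t$ vertices of $T_i$ disjoint from $S_1 \cup \cdots \cup S_{i-1}$ from which to choose $S_i$. The resulting $S_1, \ldots, S_h$ are pairwise disjoint subsets of $V(G)$, each of size $t$. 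For any transversal $u_i \in S_i$, the tuple $\{u_1, \ldots, u_h\}$ is still an edge of $J$, so for every hyperedge $\{v_{i_1}, \ldots, v_{i_r}\}$ of $H$ the set $\{u_{i_1}, \ldots, u_{i_r}\}$ is an edge of $G$. Blowing up $v_i$ to $S_i$ therefore realizes $H(t)$ as a subhypergraph of $G$, completing the proof.
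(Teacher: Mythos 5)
Your proof is correct. A small point you leave implicit, but which is indeed automatic, is that any copy of $K^h_{ht,\dots,ht}$ found in the $h$-partite hypergraph $J$ must have each of its $h$ classes $T_i$ lying entirely within a single $V_j$ (since every edge of $J$ meets each $V_j$ exactly once), so after relabeling the alignment $T_i \subseteq V_i$ that your thinning step needs is guaranteed.

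The paper does not actually supply a proof of this corollary; it simply cites Theorem~2.2 of Keevash's survey. So there is no in-paper argument to compare against, but your approach --- encode labeled copies of $H$ as edges of an auxiliary $h$-uniform, $h$-partite hypergraph on $h$ disjoint copies of $V(G)$, apply supersaturation (Theorem~\ref{supersaturation}) to extract a large complete $h$-partite subhypergraph, and then greedily thin the parts to achieve disjointness in $V(G)$ --- is the standard one, and the quantitative bookkeeping ($\epsilon n^h = \epsilon h^{-h}(hn)^h$ edges on $hn$ vertices, inflating part sizes from $t$ to $ht$ to survive the disjointification) is done correctly. In particular, the step that many write-ups of this fact gloss over --- that the $T_i$ viewed in $V(G)$ may collide, and that one must inflate the target part size before thinning --- is handled cleanly.
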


Next, we will need stability results for $F_5$ and the book $B_{4,2}$, proved by
Keevash and \mubayi~\cite{keevash04} and Pikhurko~\cite{pikhurko08} respectively.
Let the \emph{book} $B_{r,m}$ be the $r$-uniform hypergraph with vertices $x_1, \ldots, x_{r-1}, y_1, \ldots, y_r$
and hyperedges $\left\{ x_1, \ldots, x_{r-1}, y_i \right\}$ for $1 \leq i \leq m$ and
$\left\{ y_1, \ldots, y_r \right\}$.  Note that $F_5 = B_{3,2}$.

\newtheorem{F5T3}[thmctr]{Theorem}
\begin{F5T3} \label{F5T3}~\cite{keevash04}
$F_5$ is stable with respect to $T_3(n)$. \qed
\end{F5T3}

\newtheorem{4unifbookstable}[thmctr]{Theorem}
\begin{4unifbookstable} \label{4unifbookstable}~\cite{pikhurko08}
$B_{4,2}$ is stable with respect to $T_4(n)$. \qed
\end{4unifbookstable}

\noindent The last piece of the proof of Theorem~\ref{cycleiscritical} is the following lemma.

\newtheorem{StabLem}[thmctr]{Lemma}
\begin{StabLem}\label{StabLem}
If $H$ is an $r$-uniform hypergraph that is stable with respect to $T_r(n)$
and $F$ is a non-$r$-partite subhypergraph of $H(t)$ for some $t$, then $F$ is also stable
with respect to $T_r(n)$.
\end{StabLem}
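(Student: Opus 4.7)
The plan is to deduce the stability of $F$ from that of $H$ by combining supersaturation for blow-ups with the hypergraph removal lemma. Roughly, any $F$-free hypergraph is $H(t)$-free, so an $F$-free hypergraph with many edges contains few copies of $H$ and is therefore close to an $H$-free hypergraph, to which the stability of $H$ can be applied.

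First, I would verify $\pi(F) = r!/r^r$. Since $F \subseteq H(t)$, every $F$-free hypergraph is $H(t)$-free, so $\ex(n,F) \leq \ex(n,H(t))$, and the standard identity $\pi(H(t)) = \pi(H) = r!/r^r$ yields $\pi(F) \leq r!/r^r$. The reverse inequality requires $T_r(n)$ to be $F$-free, i.e.\ $F$ not strongly $r$-partite; this is an implicit structural hypothesis on $F$ that is automatic in the intended applications (for example, $F = C_{2k+1}^r$ by the preceding lemma).

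For the stability property, fix $\epsilon > 0$ and let $G$ be an $n$-vertex $F$-free hypergraph with at least $(1-\delta)t_r(n)$ edges, where $\delta$ is to be chosen. Since $F \subseteq H(t)$, $G$ is $H(t)$-free. In contrapositive form, Corollary~\ref{supersaturationblowup} applied to $H$ says there exist $\eta > 0$ and $n_0$ such that any $r$-uniform hypergraph on $n \geq n_0$ vertices with at least $\eta n^{|V(H)|}$ copies of $H$ already contains $H(t)$; hence $G$ has fewer than $\eta n^{|V(H)|}$ copies of $H$. By the hypergraph removal lemma (Theorem~\ref{hypergraphremoval}), for $\eta$ chosen small enough we may delete at most $\epsilon_1 n^r$ edges from $G$ to obtain an $H$-free hypergraph $G'$. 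Then $G'$ has at least $(1-\delta)t_r(n) - \epsilon_1 n^r$ edges, and the stability of $H$ produces some $\delta_H > 0$ such that any $H$-free hypergraph with at least $(1-\delta_H)t_r(n)$ edges admits a partition of its vertex set into $r$ parts under which all but at most $\epsilon_2 n^r$ edges are transversal. Choosing $\delta$ and $\epsilon_1$ so that $(1-\delta)t_r(n) - \epsilon_1 n^r \geq (1-\delta_H)t_r(n)$, this partition of $V(G')$ serves as a partition of $V(G)$ under which at most $(\epsilon_1 + \epsilon_2) n^r$ edges of $G$ fail to be transversal. Taking $\epsilon_1 = \epsilon_2 = \epsilon/2$ yields the conclusion.

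No individual step is deep; the substance is in the ordering of the constants (given $\epsilon$: first $\epsilon_1, \epsilon_2$; then $\delta_H$ via stability of $H$; then $\eta$ via the removal lemma; finally $\delta$ small enough to absorb the edge deletion). The only conceptual subtlety, and the one potential obstacle, is verifying $\pi(F) = r!/r^r$, since $F \subseteq H(t)$ alone does not force $F$ to be non-$r$-partite; in the applications of this lemma this is known from separate considerations.
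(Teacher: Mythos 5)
Your proof is correct and follows essentially the same route as the paper's: $F$-free implies $H(t)$-free, Corollary~\ref{supersaturationblowup} bounds the number of copies of $H$, Theorem~\ref{hypergraphremoval} removes few edges to obtain an $H$-free $G'$, and the stability of $H$ applied to $G'$ gives the partition; the constant bookkeeping is identical. The one point on which you diverge is the edge case $F \subseteq T_r(n)$: you note, correctly, that then $\pi(F) = 0 \neq r!/r^r$, so the definition of stability cannot literally be satisfied, and you flag that the statement implicitly assumes $F \not\subseteq T_r(n)$. The paper instead dismisses this case by declaring $F$ ``vacuously stable'' there, which addresses the structural clause of the definition but not the $\pi(F) = r!/r^r$ clause; your reading is the more careful one, though the distinction is immaterial for the intended application to $C^r_{2k+1}$.
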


\begin{proof}
First, $\pi(F) \geq r!/r^r$.  Indeed, since $F$ is non-$r$-partite, $T_r(n)$ is an $F$-free
hypergraph.  To complete the proof that $F$ is stable with respect to $T_r(n)$, it is therefore
enough to prove that given $\epsilon > 0$, there exists a $\delta > 0$ such that if $G$ is an
$F$-free hypergraph with at least $t_r(n) - \delta n^r$ edges, then $G$ differs from $T_r(n)$ in at
most $\epsilon n^r$ edges.  This is enough since this implies that $\pi(F) \leq r!/r^r$ so $\pi(F) =
r!/r^r$.

Let $h$ denote the number of vertices in $H$ and let $\epsilon > 0$ be fixed.  We now show how to
define $\delta$. Since $H$ is stable with respect to $T_r(n)$, there exists an $\alpha \leq
\epsilon/2$ such that if $G'$ has at least $t_r(n) - 2\alpha n^r$ edges and contains no copy of $H$,
then $G'$ differs from $T_r(n)$ in at most $\epsilon n^r/2$ edges.  By
Theorem~\ref{hypergraphremoval}, there exists $\beta = \beta(\alpha)$ such that if there are at most
$\beta n^h$ copies of $H$ in $G$ then by deleting at most $\alpha n^r$ edges of $G$ we can remove
all copies of $H$.  Lastly, choose $\delta \ll \beta$.

Now, fix some $G$ that contains no copy of $F$ and has at least 
$t_r(n) - \delta n^r$ edges. Because $G$ contains no copy of $F$ it 
contains no copy of $H(t)$.  
Therefore, by Corollary~\ref{supersaturationblowup} there are at most $\beta n^h$ copies of $H$ in $G$.
By Theorem~\ref{hypergraphremoval}, we may therefore delete $\alpha n^r$ edges in order to find
a subhypergraph $G'$ of $G$ that contains no copy of $H$.  
Notice that $G'$ has at least $t_r(n) - (\delta + \alpha)n^r$ edges, and 
$(\delta + \alpha) < 2\alpha$, so $G'$ differs from $T_r(n)$ in at most $\epsilon n^r/2$ edges. 
Therefore, $G$ differs from $T_r(n)$ in at most $(\alpha + \epsilon/2)n^r$ edges, and 
$\alpha + \epsilon/2 < \epsilon$.
\end{proof}

It is easy to see that \C{2k+1}{r} is a non-$r$-partite subhypergraph of $B_{r,2}(k)$.  Thus
Theorem~\ref{F5T3} combined with Lemma~\ref{StabLem} shows that \C{2k+1}{3} is stable with respect
to $T_3(n)$ and similarly Theorem~\ref{4unifbookstable} combined with Lemma~\ref{StabLem} shows that
\C{2k+1}{4} is stable with respect to $T_4(n)$, which completes the proof of
Theorem~\ref{cycleiscritical}.

For $r \geq 5$, a result of Frankl and F\"{u}redi~\cite{frankl89} can be used to show that
\C{2k+1}{r} is not critical.

\newtheorem{c5notcritical}[thmctr]{Lemma}
\begin{c5notcritical} \label{c5notcritical}
For $r \geq 5$ and every $k \geq 1$, $\pi(C^r_{2k+1}) > \frac{r!}{r^r}$.
\end{c5notcritical}

\begin{proof}
Let $\mathcal{H}_n$ be the family of $r$-uniform hypergraphs $H$ on $n$ vertices 
that satisfy $|E_1\cap E_2|\le r-2$ whenever $E_1$ and $E_2$ are distinct edges of $H$.
It is easy to check that for any $t >0$ the blow-up $H(t)$ of $H$ is \C{2k+1}{r}-free.
Therefore, $ex(n,C^r_{2k+1})\ge \max_{H\in\mathcal{H}_{n/t}}\{ |H(t)|\}$. 
Frankl and F{\"u}redi~\cite{frankl89} showed that for $r\ge 7$,
\begin{align*}
\max_{H\in\mathcal{H}_{n/t}} \{|H(t)|\} > \frac{n^r}{r!}\frac{1}{\binom{r}{2}e^{1+1/(r-1)}}.
\end{align*}
Thus for $r \geq 7$, $\pi(C^r_{2k+1}) > \frac{r!}{r^r}$.

All that remains is the case when $r = 5$ or $6$.  Let $F$ be an $n$-vertex, $r$-uniform hypergraph
where no three edges $E_1,E_2,E_3$ satisfy $\left| E_1 \cap E_2 \right| = r-1$ and $E_1 \Delta E_2
\subseteq E_3$.  Frankl and F\"{u}redi~\cite{frankl89} proved that if $r = 5$ then for all such $F$
we have that $\left| E(F) \right| \leq \frac{6}{11^4}n^5$.  In addition, if $11$ divides $n$ there
exists a hypergraph $F$ achieving equality.  They also proved that if $r = 6$ then for all such $F$
we have that $\left| E(F) \right| \leq \frac{11}{12^5} n^6$; again, if $12$ divides $n$ then there
exists a hypergraph $F$ achieving equality.

Notice that if $H$ is the $r$-uniform hypergraph consisting of three hyperedges $E_1$, $E_2$, and
$E_3$ such that $|E_1 \cap E_2| = r-1$ and $E_1 \Delta E_2 \subseteq E_3$, then $C^r_{2k+1}$ is a
subhypergraph of a blowup of $H$.  Using supersaturation and an argument similar to that used in the
proof of Lemma~\ref{StabLem}, it follows that $$\pi(C^5_{2k+1}) = \frac{6!}{11^4} >
\frac{5!}{5^5}\text{ and }\pi(C^6_{2k+1}) = \frac{11\cdot 6!}{12^5} > \frac{6!}{6^6},$$ as claimed.
\end{proof}

\begin{proof}[Proof of Theorem~\ref{nearkextremal}]
Let $H$ be a critical $n$-vertex, $r$-uniform hypergraph. Suppose $H$ has $h$ vertices and assume that $E$ is the special edge of
a near $r$-partition that exhibits the fact that $H$ is critical, i.e., 
$E$ has at least $r-2$ vertices of degree one.
Suppose $G$ is an $H$-free, $r$-uniform, $n$-vertex hypergraph with $|G| \geq t_r(n)$.  
We would like to show that $G = T_r(n)$.  Partition the vertices of $G$ 
into parts $X_1, \ldots, X_r$ such that the number of edges with one vertex in
each $X_i$ is maximized.  Let $\epsilon_1 = (2r)^{-h}$, let $\epsilon_2 = \epsilon_1 / 8r^3$, let $\delta = \delta(r,h,\epsilon_2)$ from
Theorem~\ref{supersaturation}, and let $\epsilon < 2^{-2r} \epsilon_1 \epsilon_2 \delta$.
Organize $r$-sets of vertices into the following sets.
\begin{itemize}
\item Let $M$ be the set of $r$-sets with one vertex in each of $X_1, \ldots, X_r$ that are not edges of $G$
      (the missing cross-edges).
\item Let $B$ be the collection of edges of $G$ that have at least two vertices in some $X_i$ (the bad edges).
\item Let $G' = G - B + M$, so that $G'$ is a complete $r$-partite hypergraph.
\item Let $B_i = \left\{ W \in B : \left| W \cap X_i \right| \geq 2 \right\}$.
\end{itemize}
Since $B = \cup_i B_i$, there is some $B_i$ that has size at least $\frac{1}{r} \left| B \right|$.  Assume without 
loss of generality that $\left| B_1 \right| \geq \frac{1}{r} \left| B \right|$.  For $a \in X_1$, make the following definitions.
\begin{itemize}
\item $B_a = \left\{ W \in B_1 : a \in W \right\}$.
\item Let $C_{a,i}$ be the edges in $B_a$ that have exactly two vertices in $X_1$
  and exactly one vertex in each $X_j$ with $j \geq 2$ and $j \neq i$.
\item Let $D_a = B_a \setminus C_{a,2} \setminus \dots \setminus C_{a,r}$.
\end{itemize}

First, $\left| B \right| < \epsilon n^r$ because $G$ is stable with respect to $T_r(n)$.
Also, since $\left| G \right| \geq t_r(n)$, the number of $r$-sets in $M$ is at most the number of edges in $B$,
so $\left| M \right| \leq \left| B \right| < \epsilon n^r$.

In the rest of the proof, we will assume that $B$ is non-empty and then count the $r$-sets in $M$ in several different ways.
Our counting will imply that $\left| M \right| \geq \epsilon n^r$, and this contradiction will force $B = \emptyset$ and so
$G = T_r(n)$.  We will count $r$-sets in $M$ by counting embeddings of $H - E$ into $G'$ that also map $E$ to some element of $B$.
Since $G$ is $H$-free, each embedding must use at least one edge in $M$.
Let $\Phi$ be the collection of embeddings $\phi : V(H) \rightarrow V(G')$ of
$H - E$ into $G'$, by which we mean that $\phi$ is an injection and for all $F \in H$, $\phi(F) = \left\{ \phi(x) : x \in F \right\} \in G'$.
We say that $\phi \in \Phi$ is \emph{$W$-special} if $\phi(E) = W$
and \emph{$a$-avoiding} if $a \in V(G)$ and some degree one vertex in $E$ is mapped to $a$.
If $W \in B$ and $\phi$ is $W$-special, then $\phi$ must use at least one edge of $M$.  Call one of these edges the 
\emph{missing edge of $\phi$}.

\medskip
\noindent\emph{Claim 1:} For $\phi \in \Phi$ and $v \in V(H)$, there are at least $\frac{1}{2r} n$ embeddings $\phi' \in \Phi$
where $\phi(x) = \phi'(x)$ for $x \neq v$ and $\phi(v) \neq \phi'(v)$.

\begin{proof}
This follows easily because $G'$ is a complete $r$-partite hypergraph for which each class has size about $n/r$, and $\phi(v)$ can be replaced by any unused vertex
in the $X_i$ that contains $\phi(v)$.
\end{proof}

Fix some $W \in B$, and consider when there exists a $W$-special embedding of $H-E$.  Since $W \in B_i$ for some $i$,
let $w_1 \neq w_2 \in W \cap X_i$.  Then there exists an embedding of $H-E$ where $w_1$ and $w_2$ are used for the
non degree one vertices in the special edge of $H$.  Since the other vertices in the special edge have degree zero in $H-E$, the vertices
in the special edge can then be embedded to $W$.  
Thus for any $W \in B$, by Claim~1 there are at least $\epsilon_1 n^{h-r}$ $W$-special embeddings
of $H-E$, since we can vary any vertex of $H$ not in $W$.  The situation with $a$-avoiding is more complicated.
If $W \in C_{a,i}$, then the only choice of $w_1$ and $w_2$ that we are guaranteed to have 
are the two vertices in $W \cap X_1$, one of which is $a$.
Thus in a $W$-special embedding, the only way we can guarantee an embedding is by mapping a non-degree one vertex to $a$.
Therefore, only when $W \in D_a$ can we guarantee that
there exists at least $\epsilon_1 n^{h-r}$ $W$-special, $a$-avoiding embeddings of $H-E$.

\medskip
\noindent\emph{Claim 2:} For every $a \in X_1$, $\left| D_a \right| \leq \epsilon_2 n^{r-1}$.

\begin{proof}
Assume there exists some $a \in X_1$ with $\left| D_a \right| \geq \epsilon_2 n^{r-1}$.  We count $a$-avoiding, $W$-special embeddings of
$H - E$ into $G'$ where $W \in D_a$.  For each $W \in D_a$, we argued above that there are at least $\epsilon_1 n^{h-r}$ embeddings.
Since $\left| D_a \right| \geq \epsilon_2 n^{r-1}$, the number of $a$-avoiding embeddings that are $W$-special for some $W \in D_a$ is
at least $\epsilon_1 \epsilon_2 n^{r-1} \cdot n^{h-r} = \epsilon_1 \epsilon_2 n^{h-1}$.

Fix some $L \in M$.  We want to count the number of $a$-avoiding embeddings that are $W$-special for some $W \in D_a$ and
have missing edge $L$.   An upper bound on the number of such embeddings
will be the number of choices for $W$ times the number of choices for the $h - \left| W \cup L \right|$ vertices of $H$ mapped outside $W \cup L$.
Since all these embeddings are $a$-avoiding, $L$ cannot contain $a$.
For each $0 \leq \ell \leq r$, there exists at least $\binom{r}{\ell}$ choices for the
intersection between $L$ and $W$, at most $n^{r-\ell - 1}$ choices of $W \in D_a$ with $\left| W \cap L \right| = \ell$
(here it is crucial that $a \in W$ and $a \notin L$), and at most $n^{h-2r+\ell}$ choices for the vertices of $H$ not
in $W \cup L$.  Thus each $L \in M$ is in at most $2^{-r} n^{h-r-1}$ potential embeddings.  
Since there are at least $\epsilon_1 \epsilon_2 n^{h-1}$ embeddings, $M$ must have size
at least $2^{-r}\epsilon_1 \epsilon_2 n^r$, contradicting the choice of $\epsilon$.
\end{proof}

\noindent\emph{Claim 3:} For every $a \in X_1$ and every $2 \leq i \leq r$, $\left| C_{a,i} \right| \leq \epsilon_2 n^{r-1}$.

\begin{proof}
Assume there exists some $a$ and $i$ with $\left| C_{a,i} \right| \geq \epsilon_2 n^{r-1}$.  The proof is similar
to the proof of Claim~2, except now we cannot count $a$-avoiding embeddings.  In the previous claim,
we used the $a$-avoiding property to imply that the missing edge does not contain $a$.  In this proof, we will instead
guarantee that the missing edge cannot contain $a$ by only counting embeddings that map all neighbors of $\phi^{-1}(a)$ into $G$.

Let $v$ be one of the non degree one vertices in the special edge of $H$, and define
$H_v = \{ F \in H : \linebreak[1] v \in F, F \neq E \}$, that is all edges of $H$ containing $v$ that are not the special edge.
Let $Z_a = \left\{ F \in G \setminus B : a \in F \right\}$, that is all cross-edges of $G$ that contain $a$.
We now count embeddings $\phi \in \Phi$ that are $W$-special for some $W \in C_{a,i}$, map $v$ to $a$, and all edges of $H_v$ are
mapped to edges in $Z_a$.  For these embeddings, since edges in $H_v$ are mapped to edges in $Z_a \subseteq G$, the missing edge cannot contain $a$.

First, $\left| Z_a \right| \geq \left| C_{a,i} \right|$, because otherwise
we could move $a$ to $X_i$ and increase the number of edges across the partition and we chose the partition $X_1, \ldots, X_r$ to 
maximize the number of cross-edges. 
Let $H' = \left\{ F - v : F \in H_v \right\}$ and $Z' = \left\{ F - a : F \in Z_a \right\}$.
Then $H'$ and $Z'$ are $(r-1)$-uniform, $(r-1)$-partite hypergraphs, and $Z'$ has at least $\left| C_{a,i} \right| \geq \epsilon_2 n^{r-1}$ edges.
Let $t = \left| V(H') \right|$.  Then Theorem~\ref{supersaturation} shows that $Z'$ contains at least $\delta n^{t}$ copies of $H'$,
so there are at least $\epsilon_2 n^{r-1} \cdot \delta n^{t} \cdot \epsilon_1 n^{h-r-t} = \epsilon_1 \epsilon_2 \delta n^{h-1}$ embeddings
of $H-E$ that are $W$-special for some $W \in C_{a,i}$, map $v$ to $a$, and the edges in $H_v$ are embedded into $Z_a$.

Now fix $L \in M$, and consider how many of these embeddings have $L$ as their  missing edge. The computation is almost
the same as in the previous claim.  For each $\ell_1, \ell_2$, there are $\binom{r}{\ell_1}$ choices for $L \cap W$, there are $\binom{r}{\ell_2}$
choices for $L \cap \phi(H_v)$, there are $n^{r-1-\ell_1}$ choices for $W$ (here we use that $L$ does not contain $a$),
$n^{t - \ell_2}$ choices for $\phi(H_v)$, and $n^{h-2r - t +\ell_1 +\ell_2}$ choices for the other vertices of $H$.
Thus each $L$ is in at most $2^{2r} n^{h-r-1}$ potential embeddings.  Since
there are at least $\epsilon_1 \epsilon_2 \delta n^{h-1}$ embeddings, $M$ must have size at least $2^{-2r} \epsilon_1 \epsilon_2 \delta n^r$,
contradicting the choice of $\epsilon$.
\end{proof}

Claims 2 and 3 imply that $\left| B_a \right| < 2r\epsilon_2 n^{r-1}$ for each $a$.  Define
\begin{align*}
A = \left\{ a \in X_1 : d_M(a) \geq 2r^2 \epsilon_2 n^{r-1} \right\}.
\end{align*}
As in the proofs of the previous two claims, we would
like to count embeddings of $H - E$ to obtain a lower bound on $\left| M \right|$.
Once again, the main difficulty is controlling how the missing edge can intersect $W$.
If there were some $W$ with $W \cap A = \emptyset$,
then there would be few missing edges intersecting this $W$, which is how we will overcome this difficulty in
this part of the proof.

\medskip
\noindent\emph{Claim 4:} There exists some $W \in B_1$ with $W \cap A = \emptyset$.

\begin{proof}
Assume that every $W \in B_1$ contains an element of $A$.  Then $\sum_{a \in A} \left| B_a \right| \geq \left| B_1 \right|$.  Since
$\left| B_a \right| < 2r\epsilon_2 n^{r-1}$ for every $a$, we have the following contradiction.
\begin{align*}
2r \epsilon_2 n^{r-1} \left| A \right| > \sum_{a \in A} \left| B_a \right| \geq \left| B_1 \right| \geq \frac{1}{r} \left| B \right| \geq 
   \frac{1}{r} \left| M \right|  \geq \frac{1}{r} \sum_{a \in A} d_M(a) 
  \geq \frac{2r^2\epsilon_2}{r} n^{r-1} \left| A \right|.
\end{align*}
\end{proof}

We now complete the proof by counting the $W$-special embeddings whose missing edge does not intersect $W$.
There are at least $\epsilon_1 n^{h-r}$ embeddings that are $W$-special by Claim~1.  If at least half of these have missing edge
intersecting $W$, then $W$ would contain a vertex in $A$.  Thus there are at least $\frac{\epsilon_1}{2} n^{h-r}$ $W$-special embeddings
where the missing edge does not intersect $W$.  Each $L \in M$ is in at most $n^{h-2r}$ such potential embeddings, so
$M$ has at least $\frac{\epsilon_1}{2} n^r$ elements, contradicting the choice of $\epsilon$.
\end{proof}

\section{Chromatic threshold of $F_5$-free hypergraphs} \label{secF5}

\subsection{An upper bound on the chromatic threshold of $F_5$-free graphs}

In this section, we prove the upper bound in Theorem~\ref{mainF5thm}.  As in Section~\ref{secChromNearr},
we will give an upper bound on the chromatic threshold by first proving that large dimension
forces a copy of $F_5$, and then by applying Theorem~\ref{coloringthm}.
Let $(B,\gamma,F)$ be an $(\rb,\rg)$-uniform fiber bundle, and make the following definition.
A \emph{cut} in $(B,\gamma,F)$ is a pair $(X,S)$ such that
$X \subseteq V(B)$, $S \subseteq \binom{F}{\rg}$, and if $\gamma(x) \cap S \neq \emptyset$, then $x \in X$.  In other words,
the fibers that intersect $S$ come exclusively from $X$. 
A \emph{$k$-cut} is a cut $(X,S)$ with $\left| X \right| \leq k$.  The size of a $k$-cut is the
size of $\left| S \right|$.

We now sketch the proof of the upper bound in Theorem~\ref{mainF5thm}.  Let $G$ be
an $n$-vertex, $3$-uniform, $F_5$-free hypergraph with minimum degree at least $c\binom{n}{2}$.
Let $(G,\gamma,F)$ be the neighborhood bundle of $G$,
let $H = K_{q,q}$ for some large constant $q$ (see the definition of $q$ in the first line of the proof of Lemma~\ref{largedimimpliesU}), and assume $\vc_H(G,\gamma,F)$ is large.  We would like to find a copy of $F_5$ in
$G$.  We first use the fact that $\vc_H(G,\gamma,F)$ is large to find a set $U$ of vertices
of $G$ such that $G[U]$ has small strong independence number.  We then argue that
because the minimum degree is large, there must be some vertices $x,y$ such that
$N(x,y) = \left\{ z : xyz \in G \right\}$ has large intersection with $U$.  Next, we show that
since $N(x,y)$ has large intersection with $U$ and $G[U]$ has small strong independence number, there must
be an edge $E$ with at least two vertices in $N(x,y) \cap U$, which gives a copy of $F_5$.

The best upper bound on the chromatic threshold will come from the lowest required minimum degree needed in the
above proof.  The minimum degree is used above to prove that there exists some $x,y$ with
$N(x,y) \cap U$ large.  If we can find a large cut $(X,S)$ in $(G,\gamma,F)$ and we make $U$ large enough,
we could remove $X$ from $U$ while still maintaining all the useful properties of $U$.
Then for all $\{x,y\} \in S$, we know that $N(x,y) \cap (U - X) = \emptyset$.
Since there are now fewer pairs $\{x,y\}$ in $\binom{F}{2}$ with $N(x,y) \cap (U - X) \neq \emptyset$,
we can require a weaker lower bound
on the minimum degree of $G$ to find $\{x,y\}$ with $N(x,y) \cap U$ large.
In other words, the larger the cut of $(G,\gamma,S)$ we can find, the better upper bound on the chromatic threshold
we can prove.  This is encoded in the following theorem, which computes the relationship
between the minimum degree and the maximum size of a $k$-cut.

\newtheorem{f5largecutbound}[thmctr]{Theorem}
\begin{f5largecutbound} \label{f5largecutbound}
Let $0 \le c \le 1/5$, and fix an integer $k$ and a constant $c' > c$.
Then there exists a constant $L = L(c,c',k)$ such that the following holds.
Let $G$ be an $n$-vertex, $F_5$-free hypergraph with $\delta(G) \geq c' \binom{n}{2}$
and let $(G,\gamma,F)$ be the neighborhood bundle of $G$.  Assume $(G,\gamma,F)$ contains a $k$-cut of size
at least $(1-5c) \binom{n}{2}$.  Then $\chi(G) \leq L$.
\end{f5largecutbound}

Note that if $c = 1/5$, then $1-5c=0$ and so this theorem directly proves
an upper bound of $1/5$ on the chromatic threshold of $F_5$-free hypergraphs.
The first part of the proof of Theorem~\ref{f5largecutbound} is to find a set $U$ with
small strong independence number.

\newtheorem{largedimimpliesU}[thmctr]{Lemma}
\begin{largedimimpliesU} \label{largedimimpliesU}
Let $\epsilon > 0$ be fixed.  Then there exists constants $d = d(\epsilon)$ and $q = q(\epsilon)$ such
that the following holds.  Let $G$ be an $n$-vertex, $3$-uniform hypergraph and let $(G,\gamma,F)$ be the neighborhood bundle
of $G$.  Let $H = K_{q, q}$ and assume $\vc_H(G,\gamma,F) \geq d$.
Then there exists a vertex set $U \subseteq V(G)$ such that $\left| U \right|= 5d$ and the strong
independence number of $G[U]$ is at most $(1+\epsilon)d$.
\end{largedimimpliesU}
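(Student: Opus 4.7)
The plan is to use the dimension hypothesis to find a structured matching in $G$ and then, via a Ramsey-type extraction applied to the $K_{q,q}$'s living in its sections, to construct $U$ by adjoining two further vertices to each matching edge.

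First, I would invoke $\vc_H(G,\gamma,F) \geq d$ with $H = K_{q,q}$ to extract disjoint edges $E_1,\ldots,E_d$ in $G$, say $E_i = \{a_i,b_i,c_i\}$, such that for every $(x_1,\ldots,x_d) \in E_1 \times \cdots \times E_d$ the section $\gamma(x_1)\cap\cdots\cap\gamma(x_d)$ contains a copy of $K_{q,q}$. The $3d$ matching vertices form the backbone of $U$.

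Second, choosing $q = q(d,\epsilon)$ sufficiently large, I would apply an iterated bipartite Ramsey / dependent-random-choice argument to the $3^d$ sections in order to extract two disjoint sets $A^{*}, B^{*} \subseteq V(G) \setminus \bigcup_i E_i$, each of size at least $d$, with the property that for every $(u,v) \in A^{*} \times B^{*}$ the pair $\{u,v\}$ belongs to $\gamma(x)$ for all but at most an $\epsilon$-fraction of the $3d$ matching vertices $x$. Picking disjoint representatives $u_i \in A^{*}$ and $v_i \in B^{*}$, one per $i$, and setting $U = \bigcup_i E_i \cup \{u_i, v_i : i \in [d]\}$, one obtains $|U| = 5d$.

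Third, I would bound the strong independence number of $G[U]$. Let $S \subseteq U$ be strongly independent. Since each $E_i$ is an edge, $|S \cap E_i| \leq 1$, so $|S \cap \bigcup_i E_i| \leq d$. The constructed pairs give, for every $(i,j)$ and every matching vertex $x$ outside the $\epsilon$-exception, an edge $\{u_i,v_j,x\} \in G$. This forces that $S$ cannot contain both a $u_i$ and a $v_j$ (otherwise some such edge has two vertices in $S$); moreover, if any $u_k \in S$, then any matching vertex in $S$ must lie in the intersection of the exception sets over all $j$, which has size at most $\epsilon d$. The symmetric argument for the $v_k$'s yields the same bound, giving $|S| \leq d + \epsilon d = (1+\epsilon)d$.

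The main obstacle is the Ramsey extraction in step two, since different sections carry their $K_{q,q}$'s on potentially disjoint vertex parts. To obtain a common bipartite structure across all $3^d$ sections, one must iteratively refine, taking $q$ large enough that each refinement retains sets of size exceeding $d$, while carefully accounting for the $\epsilon$-slack lost at each step; ensuring also that the final $A^{*}, B^{*}$ remain disjoint from the matching is the technical heart of the proof.
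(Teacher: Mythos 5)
Your first step coincides with the paper's: use $\vc_H(G,\gamma,F)\geq d$ to produce a matching $E_1,\ldots,E_d$ so that every section over $E_1\times\cdots\times E_d$ contains a $K_{q,q}$, and then augment $\bigcup E_i$ with $2d$ additional vertices to form $U$. The gap is in step two, where you ask for disjoint sets $A^{*},B^{*}$ of size $\geq d$ such that every pair $(u,v)\in A^{*}\times B^{*}$ satisfies $\{u,v\}\in\gamma(x)$ for all but an $\epsilon$-fraction of the $3d$ matching vertices $x$. This property cannot be extracted from the hypothesis. If a pair $\{u,v\}$ lies in the $K_{q,q}$ of the section indexed by $(x_1,\ldots,x_d)$, the only consequence is that $\{u,v\}\in\gamma(x_i)$ for the $d$ chosen vertices $x_i$ — that is, for exactly one vertex per edge $E_i$, hence for at least a $1/3$-fraction of the matching but no more. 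Nothing in the definition of fiber-bundle dimension constrains $\gamma(a_i)\cap\gamma(b_i)\cap\gamma(c_i)$ for $E_i=\{a_i,b_i,c_i\}$; it is entirely consistent with $\vc_H\geq d$ that this triple intersection is empty for every $i$, in which case no pair $\{u,v\}$ whatsoever lies in $\gamma(x)$ for more than one third of the matching vertices. Increasing $q$ does not help: the $3^d$ copies of $K_{q,q}$ live inside $F=V(G)$, whose size is unbounded, so they can be made pairwise vertex-disjoint and no iterated refinement recovers a common bipartite core with the approximation you need. Your subsequent counting of the strong independent set really does use the ``all but $\epsilon$'' strength — with only the attainable $1/3$-guarantee, the intersection of exception sets you invoke can have size $\Theta(d)$ rather than $O(\epsilon d)$, and the bound collapses.

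The paper sidesteps this exact difficulty by never asking for such a common bipartite structure. It takes $q=3d+2\cdot 3^d$, pulls one $K_2$ (a single pair) from the $K_{q,q}$ of each of the $3^d$ sections so that these $3^d$ pairs and the matching are all mutually vertex-disjoint, and then picks $d$ of those pairs uniformly at random to form $Z$, with $U=Z\cup\bigcup E_i$. The only geometric fact used is precisely the $1/3$ one: for a chosen pair $\{u,v\}$ and any $i$, some $w'\in E_i$ has $\{u,v,w'\}\in G$, so for a uniformly random $w\in E_i$ the probability that $\{u,v,w\}\notin G$ is at most $2/3$. A first-moment computation over all candidate sets $P\subseteq Z$, $S\subseteq\bigcup E_i$ of size $\epsilon d$ then shows that, for $d=100+100/\epsilon^2$, with positive probability no such $P\cup S$ is strongly independent, which yields strong independence number at most $(1+\epsilon)d$. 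So the replacement for your deterministic Ramsey extraction is a probabilistic deletion argument that needs only the one-vertex-per-$E_i$ property already guaranteed by the dimension hypothesis. If you want to salvage your outline, this is the point to change: abandon the demand that $\{u,v\}\in\gamma(x)$ for almost all $x$, keep the within-pair structure instead of the cross-pair structure, and let randomness plus a union bound do the work.
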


\begin{proof} Let $d = 100 + 100/\epsilon^2$ and $q = 3d + 2\cdot 3^d$.  Since $\vc_H(G,\gamma,F)
\geq d$, there exists a matching $E_1, \ldots, E_d$ such that for each $w_1 \in E_1, \ldots, w_d \in
E_d$ the section of $\left\{ w_1, \ldots, w_d \right\}$ contains a copy of $K_{q,q}$.  (See
Figure~\ref{nearkfigure} in Section~\ref{secChromNearr} for a picture of this structure.) Since $q =
3d + 2\cdot 3^d$, from each of these $3^d$ copies of $K_{q,q}$ we can pick a copy of $K_2$ such that
each $K_2$ is vertex disjoint from $E_1\cup\ldots\cup E_d$ and all these $3^d$ copies of $K_2$ are
vertex disjoint.  Now for $1 \leq i \leq d$, let $y_iz_i$ be a randomly chosen copy of $K_2$ (with
replacement), where each of the $3^d$ copies of $K_2$ are equally likely.  Let $Z = \left\{ y_1,
\ldots, y_{d}, z_1, \ldots, z_{d} \right\}$ and $U = Z \cup E_1 \cup \ldots \cup E_d$.  With
probability at most $\binom{d}{2} \frac{1}{3^d} < \frac{1}{4}$ some copy of $K_2$ is selected more than
once.  To finish the proof, we just need to show that with probability at most $1/4$, the
strong independence number of $G[U]$ is at least $(1+\epsilon)d$.  Indeed, in this case the union
bound shows that with probability at least $1/2$, $|U| = 5d$ and the strong independence number of
$G[U]$ is at most $(1+\epsilon)d$.

Notice that any strong independent set in $G[U]$ contains at most $d$ vertices from $E_1 \cup \ldots
\cup E_d$ and at most $d$ vertices from $Z$.  Thus any strong independent set in $G[U]$ with at
least $(1+\epsilon)d$ vertices must have at least $\epsilon d$ vertices in $E_1 \cup \ldots \cup
E_d$ and at least $\epsilon d$ vertices in $Z$.  We need to prove that this occurs with small
probability.

Let $x \in E_1 \cup \dots \cup E_d$ and $1 \leq i \leq d$.  We say that $\{y_i,z_i\}$ is \emph{built
from} $x$ if $\{y_i,z_i\}$ is an edge in a copy of $K_{q,q}$ which came from a section of $W$ where
$x \in W$.  That is, say $x \in E_j$.  Each section picks one of the three vertices of $E_j$ and if
the section picks $x$ and $\{y_i,z_i\}$ is the edge chosen from the copy of $K_{q,q}$ chosen from
this section, then we say that $\{y_i,z_i\}$ is \emph{built from} $x$.  For $x \in E_1 \cup \dots
\cup E_d$ and $1 \leq i \leq d$, let $A_{x,i}$ be the following event:
\begin{align*}
  A_{x,i}: \{y_i,z_i\} \text{ is built from $x$}.
\end{align*}
First, $\mathbb{P}[A_{x,i}] = 1/3$.  Indeed, say $x \in E_j$ and note that there are $3^d$ copies of
$K_2$ in total (there are three choices from each of $E_1,\dots,E_d$ for the section) and there are
$3^{d-1}$ copies of $K_2$ built from $x$.  Therefore, when randomly picking copies of $K_2$, the
probability that $\{y_i, z_i\}$ is built from $x$ is exactly $1/3$.

Let $\mathcal{S} = \{ S \subseteq E_1 \cup \dots \cup E_d : |S| = \epsilon d \text{ and $S$ has at
most one vertex in each $E_i$}\}$.  We claim that the events $A_{x,i}$ for $x \in S$ are mutually
independent for every $S \in \mathcal{S}$.  Indeed, fix some $Q \subseteq S$.  Then
\begin{align*}
  \mathbb{P}\left[ \bigwedge_{x \in Q} A_{x,i} \right] = \frac{3^{d-|Q|}}{3^d} = \left( \frac{1}{3}
  \right)^{|Q|}
\end{align*}
since there are $3^{d-|Q|}$ of the copies of $K_2$ built from $x$ for
$x \in Q$ and built on any of three vertices in the edges $E_j$ that do not contain a vertex of
$Q$ (recall that $S$ has at most one vertex in each $E_j$).  Thus $\mathbb{P}[\wedge_{x\in Q}
A_{x,i}] = \prod_{x\in Q} \mathbb{P}[A_{x,i}]$ so that for every $S \in \mathcal{S}$ the events
$A_{x,i}$ for $x \in S$ are mutually independent.  Therefore,
\begin{align*}
  \mathbb{P} \left[ \bigwedge_{x \in S} \overline{A_{x,i}} \right] = \left( \frac{2}{3}
  \right)^{|S|}.
\end{align*}
Let $B_{S,i}$ be the event
\begin{align*}
  B_{S,i}: \text{ no edge of $G$ contains a vertex of $S$ and both $y_i$ and $z_i$}.
\end{align*}
If $B_{S,i}$ holds, then for every $x \in S$ it is the case that the event $A_{x,i}$ fails since if
$A_{x,i}$ holds then $\{y_i,z_i,x\} \in E(G)$.  Thus
\begin{align*} \label{f5:probAsi}
  \mathbb{P}[B_{S,i}] \leq \mathbb{P} \left[ \bigwedge_{x \in S} \overline{A_{x,i}} \right] = \left( \frac{2}{3}
  \right)^{|S|}.
\end{align*}
For each $T \subseteq [d]$ with $|T| = \epsilon d$, let $B_{S,T}$ be the conjunction of the
events $B_{S,i}$ for all $i \in T$.  The events $B_{S,i}$ are mutually independent for $i \in T$
since the copies of $K_2$ were selected with replacement, so that $\mathbb{P}[B_{S,T}] \leq (2/3)^{|S||T|}$.
Let $X_{S,T}$ be the indicator random variable for the event
$B_{S,T}$ and let $X$ be the sum of all indicator random variables over all $S \in \mathcal{S}$ and
all $T \subseteq [d]$ with $|T| = \epsilon d$.  We now have $\binom{d}{\epsilon d}$ choices for $T$
and $3^{\epsilon d} \binom{d}{\epsilon d}$ choices for $S$ so that
\begin{align*}
  \mathbb{E}[X] = \sum X_{S,T} \leq 3^{\epsilon d} \binom{d}{\epsilon d}^2 \left( \frac{2}{3} \right)^{\epsilon^2 d^2} 
\leq \left( 3 \left( \frac{e}{\epsilon} \right)^2  \left( \frac{2}{3} \right)^{\epsilon d}
\right)^{\epsilon d} < \frac{1}{4}.
\end{align*}
By Markov's inequality, the probability that $X \geq 1$ is at most $1/4$ so that with probability at
most $1/4$, some $B_{S,T}$ holds.  If $W$ is a strong independent set in $G[U]$ with $|W| \geq
(1+\epsilon)d$, then $|W \cap Z| \geq \epsilon d$ and $|W \cap (E_1\cup\dots\cup E_d)| \geq \epsilon
d$.  Also, $W$ uses at most one vertex from each pair in $Z$ so that there exists $T\subseteq[d]$ of
size $\epsilon n$ such that for $i \in T$ we have that either $y_i$ or $z_i$ is in $W$.  Since $W$
uses at most one vertex from each $E_i$, there exists $S \subseteq W \cap (E_1\cup\dots\cup E_d)$
with $|S| = \epsilon d$ and $S \in \mathcal{S}$.  Since $W$ is a strong independent set the event
$B_{S,T}$ holds.  Therefore, the probability some $B_{S,T}$ holds is an upper bound for the
probability the strong independence number of $G[U]$ is at least $(1+\epsilon)d$.  Since the
probability that some $B_{S,T}$ holds is at most $1/4$, the proof is complete.
\end{proof}

We can now prove Theorem~\ref{f5largecutbound}.

\begin{proof}[Proof of Theorem~\ref{f5largecutbound}]
Pick $\epsilon$ so that $c' = (1+2\epsilon)c$ and let $d = d(\epsilon)$ and $q = q(\epsilon)$ be given by 
Lemma~\ref{largedimimpliesU}, and also assume that $d$ is large enough so that $5d\epsilon > k(1+2\epsilon)$.
Suppose that if $H = K_{q,q}$ then $\dim_H(G,\gamma,F) \leq d$.  Then by Theorem~\ref{coloringthm}, there exists
constants $K_1 = K_1(\epsilon,d,H)$ and $K_2 = K_2(\epsilon,d,H)$ (note that $K_1$ and $K_2$ depend only on $c,c',k$)
such that either $\left| F \right| < K_1$ or $\chi(G) < K_2$.  Since $\left| F \right| = \left| V(G) \right|$,
this implies that $\chi(G) < \max\{K_1, K_2\}$.

We can therefore assume that $\dim_H(G,\gamma,F) \geq d$.  By Lemma~\ref{largedimimpliesU}, there exists a set
$U \subseteq V(G)$ such that $\left| U \right| = 5d$ and the strong independence number of $G[U]$ is at most $(1+\epsilon)d$.
Let $(X,S)$ be a $k$-cut of size at least $(1-5c)\binom{n}{2}$.
Let $G'$ be the bipartite graph with partite sets $A = U \setminus X$ and $B=\binom{V(G)}{2}\setminus S$ where $\left\{ u ,\left\{ v,w\right\}\right\}$ is an edge in $G'$ if and only if $\left\{ u , v,w\right\}$ is an edge in $G$. $\left| A \right|\ge 5d - \left| X \right|$, so $G'$ contains at least $(5d - \left| X \right|) \delta(G)$ edges. $ \left| B \right|=\binom{n}{2} - \left| S \right|$, so there is some $x \neq y$ such that $d_{G'}\left(\{x,y\}\right)$ is at least

\begin{align*}
\frac{(5d - \left| X \right|) \delta(G)}{\binom{n}{2} - \left| S \right|} 
\geq \frac{(5d - k) (1+2\epsilon)c \binom{n}{2}}{5c \binom{n}{2}}
= \frac{(5d - k) (1+2\epsilon)}{5}
> (1+\epsilon)d.
\end{align*}
This implies that there is some $x,y$ with $\left| N(x,y) \cap U \right| > (1+\epsilon)d$.
Since the strong independence number of $G[U]$ is at most $(1+\epsilon)d$,
there exists some edge $E$ with two vertices in $N(x,y)$.
Then $x,y$ together with $E$ form a copy of $F_5$ in $G$.  This contradiction completes the proof.
\end{proof}

\subsection{Finding a large cut in an $F_5$-free hypergraph}

In order to use Theorem~\ref{f5largecutbound} to prove the upper bound in Theorem~\ref{mainF5thm}, we now need to show the existence of a large cut.
Note that in Theorem~\ref{f5largecutbound} the bound on the chromatic number depends
on $k$ but there are no other restrictions on $k$.  Thus to prove an upper bound on the chromatic
threshold of a $F_5$-free graph $G$, one can pick any fixed integer $k$ and ask what is the size of the largest $k$-cut.
In the following lemma, we set $k = 5$ and prove that if $\delta(G)\ge c'\binom{n}{2}$ with $c'>c$, then there exist a $5$-cut of $G$ of size approximately
$4c^2 \binom{n}{2}$.  Solving $4c^2 = 1-5c$ gives $c = (\sqrt{41}-5)/8$, the bound in Theorem~\ref{mainF5thm}.

We suspect that the bound on the chromatic threshold of $F_5$-free hypergraphs can
be improved by finding a larger cut, perhaps by increasing $k$.  In order to achieve a bound of $c = 6/49$,
we would need to find a cut of size $s\binom{n}{2}$ with $s = 1-5c=539/36 c^2 \approx 15c^2$.

\newtheorem{F5implieslargecut}[thmctr]{Lemma}
\begin{F5implieslargecut} \label{F5implieslargecut}
Let $0 < c < c'$ be fixed.  There exists a constant $n_0 = n_0(c,c')$ such that for all $n > n_0$ the following holds.
Let $G$ be an $n$-vertex, $3$-uniform, $F_5$-free hypergraph with $\delta(G) \geq c'\binom{n}{2}$. Let
$(G,\gamma,F)$ be the neighborhood bundle of $G$. Then $(G,\gamma,F)$ has a 
$5$-cut of size at least $4 \binom{c(n-1)}{2}$.
\end{F5implieslargecut}

\noindent Combining Theorem~\ref{f5largecutbound} with Lemma~\ref{F5implieslargecut}, we can prove
Theorem~\ref{mainF5thm}.

\begin{proof}[Proof of Theorem~\ref{mainF5thm}]
Let $c = (\sqrt{41}-5)/8$, let $c' > c$ be fixed, and let $G$ be any $n$-vertex, $3$-uniform, $F_5$-free graph
with minimum degree at least $c' \binom{n}{2}$. Let $(G,\gamma,F)$ be the neighborhood bundle of $G$.
Let $b = (c' + c)/2$ so that $c' > b > c$.
Then by Lemma~\ref{F5implieslargecut}, either $\left| V(G) \right|$ is bounded or $(G,\gamma,F)$ contains a $5$-cut
of size at least $4\binom{b(n-1)}{2}$.  Since $b > c$, if $n$ is large enough this is at least $4c^2 \binom{n}{2}$.
Notice that $4c^2 = 1-5c$, so Theorem~\ref{f5largecutbound} implies that the
chromatic number of $G$ is bounded.
\end{proof}

\noindent The first step in the proof of Lemma~\ref{F5implieslargecut} is the following lemma.

\newtheorem{goodpairlemma}[thmctr]{Lemma}
\begin{goodpairlemma} \label{goodpairlemma}
In a graph $G$, we call a non-edge $uv \notin E(G)$ \emph{good} if $N(u)\cap N(v)\ne\emptyset$.
If $G$ is a triangle-free graph with $n$ vertices and $m$ edges, then $G$ has at least $m-n/2$ good
non-edges.
\end{goodpairlemma}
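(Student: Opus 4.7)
The plan is to prove the lemma by induction on $n = |V(G)|$, the base case $n \leq 1$ being trivial. For the inductive step, I first reduce to the case $\delta(G) \geq 2$ by removing low-degree vertices. If $G$ has an isolated vertex $v$, remove it: $g(G-v) = g(G)$ while $m - n/2$ improves by $1/2$. If $G$ has a component that is a single edge $uv$, remove both endpoints: $g$ is unchanged (neither endpoint has a vertex at distance two), $m$ decreases by $1$ and $n$ by $2$, so $m - n/2$ is preserved. If $G$ has a leaf $v$ whose neighbor $u$ has $d(u) \geq 2$, remove $v$; the target drops by $1/2$, but $v$ is incident to exactly $d(u) - 1 \geq 1$ good non-edges, namely $\{v, w\}$ for each $w \in N(u) \setminus \{v\}$, and these are the only good non-edges affected by the removal (since $v$, having only one neighbor, cannot be the unique common neighbor of any pair disjoint from itself), so $g(G) \geq g(G-v) + 1$.

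Assuming now $\delta(G) \geq 2$, I switch to induction on $m$. For any edge $e = uv$, a case analysis on whether each good non-edge of $G$ remains good in $G - e$ yields
\[
g(G) - g(G - e) = a + b,
\]
where $a = |\{y : \{u,y\} \notin E(G),\ N_G(u) \cap N_G(y) = \{v\}\}|$ and $b$ is defined symmetrically with the roles of $u$ and $v$ reversed. If $G$ contains any good non-edge $\{x, y\}$ with a unique common neighbor $z$, then choosing $e = xz$ forces $a \geq 1$, and the inductive hypothesis $g(G - e) \geq (m - 1) - n/2$ delivers the desired bound.

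It remains to address the residual subcase in which every good non-edge of $G$ has at least two common neighbors. Here, writing the target inequality as $\sum_v n_2(v) \geq \sum_v (d(v) - 1)$, where $n_2(v)$ is the number of vertices at distance exactly two from $v$, the triangle-free constraint (which gives $N(u) \cap N(v) = \emptyset$ for each edge $uv$ and $N_2(v) = \bigcup_{u \in N(v)} N(u) \setminus \{v\}$) combined with the codegree hypothesis imposes substantial structural regularity: every pair of neighbors of any vertex must share a second common neighbor, and iterating this from a vertex of minimum degree forces a near-$K_{\delta, \delta}$ configuration inside $G$. The main obstacle will be rigorously pinning down this extremal structure so as to conclude that $G$ is essentially a disjoint union of complete bipartite graphs $K_{a,b}$ with $a, b \geq 2$, for which the bound reduces to the direct identity $g - (m - n/2) = (a-b)^2/2 \geq 0$ applied componentwise.
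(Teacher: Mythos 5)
Your reduction steps (isolated vertices, isolated edges, leaves, and the edge-removal trick when some good non-edge has a \emph{unique} common neighbor) are all correct and carefully argued; in particular the accounting $g(G)-g(G-e)=a+b$ and the observation that a degree-one vertex cannot be the sole common neighbor of any other pair are both right. The problem is the residual case, which you explicitly flag as ``the main obstacle'' and never resolve: the hypothesis that every good non-edge has at least two common neighbors and $\delta(G)\geq 2$ does \emph{not} force $G$ to be (essentially) a disjoint union of complete bipartite graphs. The $3$-cube $Q_3$ is a counterexample: it is connected, $3$-regular, triangle-free, and every pair at distance two has exactly two common neighbors (so every edge removal gives $a=b=0$ and the edge-removal step is useless), yet $Q_3$ is not $K_{a,b}$ for any $a,b$. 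So the proposed plan for the final case would not close the argument; you would need a genuinely different idea there, and as written the proof is incomplete.

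For comparison, the paper's proof is also by induction on $n$ but uses a vertex-removal scheme that yields a much narrower residual case. If some component is non-regular, it removes a vertex $u$ having a neighbor $v$ of strictly larger degree and charges the good non-edges $\{u,w\}$ for $w\in N(v)\setminus\{u\}$; this loses $d(u)$ edges but gains $d(v)-1\geq d(u)$ good non-edges, precisely matching the drop in $m-n/2$. If every component is $r$-regular, it removes a vertex $v$ and compares $|N_2(v)|$ to $r$: when $|N_2(v)|\geq r$ the same charging works, and when $|N_2(v)|<r$ the triangle-free $r$-regular hypothesis \emph{forces} the component to be exactly $K_{r,r}$, which is then handled by the same $(r-r)^2/2=0$ computation you use. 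The crucial difference is that the paper's case split (regular versus non-regular; $|N_2(v)|\geq r$ versus $<r$) pins the hard residual case down to genuine $K_{r,r}$ components, whereas your split (unique versus multiple common neighbors) leaves a residual class that is strictly larger and contains graphs like $Q_3$ that are not unions of complete bipartite graphs.
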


\begin{proof} 
We prove this by induction on $n$. It is obviously true for $n=1$ and $n=2$. Now
assume $n>2$. If some component of $G$ is not regular, then there exist vertices $u,v$ in that
component such that $u\in N(v)$ and $d(u)<d(v)$. Then $G-u$ has $n-1$ vertices and $m-d(u)$ edges.
By induction, $G-u$ has at least $m-d(u)-\frac{n-1}{2}$ good non-edges. For any vertex $w\in N(v)-u$,
$uw$ is a good non-edge, so $G$ has at least $m-d(u)-\frac{n-1}{2}+d(v)-1\ge m-n/2$ good
non-edges. If all components of $G$ are regular, then pick one component $K$. Assume $K$ is
$r$-regular, choose a vertex $v$ in $K$, and let $N_{2}(v)=\left\{u: \mbox{ there exists a $P_3$
connecting $u$ and $v$}\right\}$. If $|N_2(v)|\geq r$, then by the induction hypothesis $G-v$ has at least
$m-r-\frac{n-1}{2}$ good non-edges, and since for any vertex $u\in N_2(v)$ it is the case that $uv$ is a good non-edge,
$G$ has at least $m-r-\frac{n-1}{2}+|N_2(v)|\geq m-n/2$ good non-edges. If $|N_2(v)|< r$, then since $K$ is
triangle-free and $r$-regular, $K$ is the complete bipartite graph $K_{r,r}$, which has $r^2$ edges and $r^2-r$ good non-edges. Now $G-K$
has $n-2r$ vertices and $m-r^2$ edges, so by induction it has $m-r^2-(n-2r)/2$ good non-edges. Then $G$
has $m-r^2-(n-2r)/2+r^2-r=m-n/2$ good non-edges.  
\end{proof}  

\begin{proof}[Proof of Lemma~\ref{F5implieslargecut}]
We examine the copies of $F_4$ in $G$ where $F_4$ is the
hypergraph with vertex set $\left\{ 1,2,3,4 \right\}$ and edges $\{1,2,3\}$, $\{1,2,4\}$, 
and $\{2,3,4\}$.

\medskip 
\noindent\emph{Case 1}: There exists a vertex $v$ of $G$ such that $v$ is not contained in
any copy of $F_4$.  Consider $L = \gamma(v)[V(G) - v]$, which is a triangle-free graph with $n-1$
vertices and at least $c \binom{n}{2}$ edges. By Lemma~\ref{goodpairlemma}, $L$ has at least 
$c\binom{n}{2}-\frac{n-1}{2}$ good non-edges. Let $X=\emptyset$ and $S$ be the set of these good non-edges.
We claim that $(X,S)$ is a cut in $(G,\gamma,F)$. Suppose for contradiction that there exists some
$x\in V(G)$ and $\left\{ u,w \right\}\in S$ such that $\left\{ u,w,x \right\}\in G$. Pick a vertex
$y$ from $N_L(u)\cap N_L(w)$. Then $u,v,w,x,y$ form a copy of $F_5$ in $G$, which is a contradiction. 
  
\medskip
\noindent\emph{Case 2}: Every vertex of $G$ is contained in some copy of $F_4$.
Pick some $U \subseteq V(G)$ such that $G[U] = F_4$, let
$U=\left\{ u_1,u_2,u_3,u_4\right\}$, and let $G'=\cup_{i=1}^{4} \gamma(u_i)$.
Consider $\gamma(u_i) \cap \gamma(u_j)$ for $i \neq j$.  If $\gamma(u_i) \cap \gamma(u_j)$
contains a matching of size two, then $G$ contains a copy of $F_5$.  Say $ab, cd \in \gamma(u_i)
\cap \gamma(u_j)$ with $a,b,c,d$ distinct.  Then since $G[U] = F_4$, there is some edge
$E = \left\{ u_i, u_j, w \right\} \in G$.  If $w \neq a$ and $w \neq b$, then $a,b,u_i,u_j,w$ form a copy
of $F_5$ and if $w = a$ or $w = b$, then $c,d,u_i,u_j,w$ form a copy of $F_5$.  Thus
$\gamma(u_i) \cap \gamma(u_j)$ is a star so has at most $n$ elements.  Since each
$\gamma(x)$ has size at least $c' \binom{n}{2}$, $G'$ has at least
$4c' \binom{n}{2} - \binom{4}{2} n > 4c \binom{n}{2}$ edges if $n$ is large enough.

Then $G'$ has $n$ vertices and at least $4c\binom{n}{2}$ edges, so there exist a vertex $v$ whose degree
in $G'$ is at least $4c(n-1)$. Let $N$ denote the neighborhood of $v$ in $G'$ and let
$N_1,\dots,N_{4}$ be a partition of $N$ such that for every $1\le i\le 4$ and every vertex
$w\in N_i, vw\in\gamma(u_i)$. Let $X=U\cup \{v\}$ and $S = \bigcup_{i=1}^{4} \binom{N_{i}}{2}$,
so that $|X|=5$ and $|S|\geq 4\binom{|N|/4}{2}= 4\binom{c(n-1)}{2}$. We claim that
$(X,S)$ is a cut in $(G,\gamma,F)$. Suppose for contradiction that there exists some $z\notin X$ such
that $\gamma(z)\cap S\neq \emptyset$. Pick $\{x,y\}\in \gamma(z)\cap S$, then $\{x,y\} \subseteq
N_i$ for some $1 \leq i \leq 4$. Now $v,u_i,x,y,z$ form a copy of $F_5$, which is a contradiction.

From these two cases we can see that $(G,\gamma,F)$ has a $5$-cut of size at least 
\[\min\left\{c \binom{n}{2}-\frac{n-1}{2}, 4\binom{c(n-1)}{2}\right\}.\] 
Because $G$ is $F_5$-free, it follows that $c\le 2/9$ and therefore $\min\left\{c \binom{n}{2}-\frac{n-1}{2},4\binom{c(n-1)}{2}\right\}=4\binom{c(n-1)}{2}$.
\end{proof}

\subsection{A construction for the lower bound} \label{f5lowersection}

To prove a lower bound on the chromatic threshold of the family of $F_5$-free hypergraphs,
we need to construct an infinite sequence of $F_5$-free hypergraphs with large chromatic number and large minimum degree.
Our construction is inspired by a construction by Hajnal~\cite{cr-erdos73} of a dense triangle-free graph with high chromatic number.
Hajnal's key idea was to use the Kneser graph to obtain large chromatic number. 
The Kneser graph $\kneser(n,k)$ has vertex set $\binom{[n]}{k}$, and two vertices $F_1, F_2$ form an edge if and only if
$F_1 \cap F_2=\emptyset$.  We use an extension of Kneser graphs to hypergraphs.
Alon, Frankl, and Lov\'{a}sz~\cite{kn-alon86} considered the Kneser hypergraph $\kneser^r(n, k)$, which is the $r$-uniform hypergraph
with vertex set $\binom{[n]}{k}$, and $r$ vertices $F_1, \ldots, F_r$ form an edge
if and only if $F_i \cap F_j = \emptyset$ for $i \neq j$. They gave a lower bound
on the chromatic number of $\kneser^r(n,k)$ as follows.

\newtheorem{kneserlower}[thmctr]{Theorem}
\begin{kneserlower} \label{kneserlower}
If $n \geq (t-1)(r-1) + rk$, then $\chi(\kneser^r(n, k)) \geq t$.
\end{kneserlower}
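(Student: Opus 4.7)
The plan is to follow the topological argument of Alon, Frankl, and Lov\'{a}sz, which extends Lov\'{a}sz's Borsuk-Ulam proof of the original Kneser conjecture (the $r=2$ case) to $r$-uniform Kneser hypergraphs. The main topological input is the B\'{a}r\'{a}ny-Shlosman-Sz\H{u}cs (BSS) theorem, a $\mathbb{Z}_p$-equivariant strengthening of Borsuk-Ulam: for prime $p$, no continuous $\mathbb{Z}_p$-equivariant map from a free $\mathbb{Z}_p$-sphere $S^d$ into $\mathbb{R}^d \setminus \{0\}$ can exist.

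I would first handle the case $r = p$ prime. Assume for contradiction that there is a proper coloring $c : \binom{[n]}{k} \to [t-1]$, so no color class contains $r$ pairwise disjoint $k$-sets. Place $n$ points $v_1, \ldots, v_n$ in general position on a sphere $S^d$ of dimension $d = (t-1)(r-1)$, equipped with a fixed-point-free $\mathbb{Z}_p$-action (cyclic rotations by $2\pi/p$ on $t-1$ blocks of $p-1$ coordinates, together with a trivial direction). For each $x \in S^d$ and each element $x^{(j)}$ in its $\mathbb{Z}_p$-orbit, let $H_j(x) \subseteq [n]$ be the set of indices of those $v_i$ lying in the open hemisphere determined by $x^{(j)}$. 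General position together with the dimension count $n \geq (t-1)(p-1) + rk$ forces the $p$ hemispheres to contain enough of the $v_i$ that some $H_j(x)$ always has a $k$-subset.

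Using $c$, construct a continuous $\mathbb{Z}_p$-equivariant map $f : S^d \to \mathbb{R}^d \setminus \{0\}$ whose $(j, i)$-coordinate (with $j \in \mathbb{Z}_p$ and $i \in [t-1]$) records, via a suitable partition-of-unity on the simplices spanned by the color-$i$ $k$-subsets of $H_j(x)$, how strongly color $i$ is realized in hemisphere $H_j(x)$. BSS then forces $f$ to vanish at some $x$; simultaneous vanishing of every $(j,i)$-coordinate at that point produces a single color class meeting every hemisphere $H_0(x), \ldots, H_{p-1}(x)$ in a $k$-subset, i.e.\ $r = p$ pairwise disjoint $k$-sets of one color, contradicting the assumption on $c$. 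Composite $r = p_1 p_2 \cdots$ is then treated either by Sarkaria's inequality bounding the $\mathbb{Z}_p$-index of the associated box complex for some prime $p \mid r$, or by adapting the hemisphere construction to finer orbit partitions so that $r$ rather than $p$ disjoint sets are produced.

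The main obstacle is the topological apparatus. Borsuk-Ulam–type input appears essential for sharp Kneser-style chromatic lower bounds, and no purely combinatorial proof matching a bound of the form $(t-1)(r-1) + rk$ is known. The technical crux is the construction of the equivariant map $f$ so that its vanishing encodes exactly a monochromatic $r$-matching of $k$-sets, with the dimension of the target matching $d = (t-1)(r-1)$; the surrounding combinatorics (general position of the $v_i$, continuous extension of $f$ on the measure-zero locus where some $|H_j(x)| < k$, and verification of $\mathbb{Z}_p$-equivariance) is routine once this framework is in place.
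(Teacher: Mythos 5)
First, note that the paper does not prove this statement: it cites it directly from Alon, Frankl, and Lov\'asz~\cite{kn-alon86} and then uses it as a black box in the lower-bound constructions of Sections~\ref{f5lowersection}--\ref{secopenproblems}. So there is no in-paper proof to compare against; what I can do is assess your sketch on its own terms, and there it has a real gap.

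You are importing B\'ar\'any's antipodal-hemisphere proof of the Lov\'asz--Kneser theorem (the $r=2$ case) and lifting it verbatim to a $\mathbb{Z}_p$-setting, but the geometric heart of that proof does not survive the lift. For $r=2$, the two open hemispheres centered at $x$ and $-x$ are complementary, hence disjoint, so a color class that contributes a $k$-subset to both hemispheres automatically yields two \emph{disjoint} $k$-sets, i.e.\ a hyperedge. For a prime $p\geq 3$, the $p$ open hemispheres centered at the points of a free $\mathbb{Z}_p$-orbit $\{x,\omega x,\ldots,\omega^{p-1}x\}$ on a single sphere are \emph{not} pairwise disjoint (you cannot tile a sphere with more than two disjoint open hemispheres), so the vanishing of your equivariant map only gives $p$ monochromatic $k$-subsets that may overlap, which is not an edge of $\kneser^r(n,k)$. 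This is precisely why Alon--Frankl--Lov\'asz and the later Sarkaria/K\v{r}\'i\v{z}/Ziegler treatments abandon the one-sphere hemisphere picture in favor of the $p$-fold deleted join (equivalently, a $\mathbb{Z}_p$-Tucker or Dold-index argument on a join of spheres), where the required disjointness is built into the combinatorics of the deleted join rather than into hemisphere geometry. Two smaller points you dismiss as routine are not: your $(j,i)$-indexed map has $p(t-1)$ real coordinates rather than $d=(t-1)(p-1)$, so you must quotient out the $\mathbb{Z}_p$-fixed diagonal in each block and check the residual representation acts freely off the origin before any Dold/BSS-type theorem applies; and "general position'' alone does not guarantee every hemisphere is $k$-rich --- already for $r=2$ one needs a moment-curve embedding and Gale's lemma. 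Finally, the composite-$r$ case is waved at in two incompatible directions: Sarkaria's index inequality for a prime $p\mid r$ only produces a monochromatic $p$-matching, not an $r$-matching, so the reduction from composite $r$ to primes must be stated and argued separately; it does not follow from your sketch.
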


\noindent We first show that $\kneser^r(n,k)$ is $F_5$-free for $n < 4k$.

\newtheorem{kneserf5free}[thmctr]{Lemma}
\begin{kneserf5free} \label{kneserf5free}
If $n < 4k$, then $\kneser^3(n,k)$ is $F_5$-free.
\end{kneserf5free}

\begin{proof}
Say $\left\{ a,b,c \right\}$, $\left\{ a,b,d \right\}$ and $\left\{ c,d,e \right\}$ are edges in $\kneser^3(n,k)$.  Then
by definition $a$, $b$, $c$, and $d$ are four disjoint $k$-sets in $[n]$, which is impossible because $n < 4k$.
\end{proof}


\begin{proof}[Proof of the lower bound in Theorem~\ref{mainF5thm}]
Fix $t \geq 2$ and $\epsilon > 0$.  Pick $k \geq 2t$ and $n = 3k + 2(t-1)$ and note that $n < 4k$.  
By Theorem~\ref{kneserlower},
$\kneser^3(n,k)$ has chromatic number at least $t$ and by Lemma~\ref{kneserf5free} is 
$F_5$-free.  For integers $u$, $v$, and $w$ where $n$ divides $u$, let $U$, $V$ and $W$ be 
disjoint vertex
sets of size $u$, $v$, and $w$ respectively.  Partition $U$ into $U_1, \ldots, U_n$ such that 
$\left| U_i \right| = \frac{u}{n}$ for each $i$.
Let $H$ be the hypergraph with vertex set $V(\kneser^3(n,k)) \cup U \cup V \cup W$ 
and the following edges.
\begin{itemize}
\item For $\left\{ S_1, S_2, S_3 \right\} \in \kneser^3(n,k)$, make 
      $\left\{ S_1, S_2, S_3 \right\}$ an edge of $H$.
\item For $S \in V(\kneser^3(n,k))$, $x \in U_i$ with $1 \leq i \leq n$, and $y \in V$, 
      make $\left\{ S,x,y \right\}$ an edge of $H$ if $i \in S$.
\item For $x \in U$, $y \in V$, and $z \in W$, make $\left\{ x,y,z \right\}$ an 
edge of $H$.
\end{itemize}
\noindent Notice that $H$ has chromatic number at least $t$ because $\kneser^3(n,k)$ is a subhypergraph.  

\medskip
\noindent\emph{Claim 1:} $H$ contains no subgraph isomorphic to $F_5$.
\begin{proof}
Suppose $\{a, b, c\}, \{a,b,d\}$ and $\{c,d,e\}$ are the hyperedges of a copy of $F_5$ in $H$. 
Notice that the hypergraph induced by $U, V, V(\kneser^3(n,k))\cup W$ is $3$-partite, 
apart from those edges within $\kneser^3(n,k)$. Note that a $3$-uniform, $3$-partite hypergraph is $F_5$-free, therefore any copy of $F_5$ must contain 
an edge from $\kneser^3(n,k)$.  If that edge is $\{a,b,c\}$ then $d$ must also be 
contained in $V(\kneser^3(n,k))$.  But then $c$ and $d$ are both in $V(\kneser^3(n,k))$, which 
means $e$ must be as well.  Because $\kneser^3(n,k)$ is $F_5$-free, this is a contradiction. 
Similarly, $\{a,b,d\} \subsetneq V(\kneser^3(n,k))$.  Therefore, 
$\{c,d,e\} \subseteq V(\kneser^3(n,k))$, and without loss of generality $b\in U$ and $a \in V$.  
Because $\{a,b,c\}$ and $\{a,b,d\}$ are edges, $b$ must be in both $c$ and $d$, which contradicts 
the fact that $\{c,d,e\}$ is an edge of $\kneser^3(n,k)$.
\end{proof}

\medskip
\noindent\emph{Claim 2:} The minimum degree of $H$ is at least $(1-\epsilon) \frac{6}{49} \binom{\left| V(H) \right|}{2}$ if
$\left| V(H) \right|$ is large enough.

\begin{proof}
Vertices in $\kneser^3(n,k)$ have degree at least $k\frac{u}{n}v = \frac{kuv}{3k + 2(t-1)}$.  
Since $t$ is fixed, we can choose $k$ large enough that 
vertices in $\kneser^3(n,k)$ have degree at least $(1-\epsilon/2) uv/3$.  Vertices in $A$ 
have degree at least $vw$, vertices in $B$ have degree at least $uw$,
and vertices in $C$ have degree at least $uv$.  Thus the minimum degree of $H$ 
is at least $\min\left\{ (1-\epsilon/2)\frac{uv}{3}, uw, vw \right\}$.
Choose $u$, $v$, and $w$ so that $\frac{uv}{3} = uw = vw$, we obtain that $u = v$ 
and $w = v/3$ and the minimum degree is at least $(1-\epsilon/2) u^2/3$.
The number of vertices is $u + v + w + \binom{n}{k} = \frac{7}{3} u + \binom{n}{k}$.  Since 
$u^2/3 \approx 6/49 \binom{7u/3}{2}$, we can choose $u$ large enough so that
the minimum degree of $H$ is at least 
$(1-\epsilon) \frac{6}{49} \binom{\left| V(H) \right|}{2}$.
\end{proof}

We have proved that for every fixed $t \geq 2$ and every $\epsilon > 0$, there is a constant $N_0$ such that for $N > N_0$ there exists
an $N$-vertex, $3$-uniform, $F_5$-free hypergraph with chromatic number at least $t$ and
minimum degree at least $(1-\epsilon)\frac{6}{49} \binom{\left| V(H) \right|}{2}$.  By the definition of chromatic threshold,
this implies that the chromatic threshold of the family of
$F_5$-free hypergraphs is at least $\frac{6}{49}$.
\end{proof}

\section{Generalized Kneser hypergraphs} \label{secnewKneser}

In Section~\ref{f5lowersection}, we used a generalization of the Kneser graph to hypergraphs to give a lower bound on the
chromatic threshold of the family of $F_5$-free hypergraphs.  In Section~\ref{secopenproblems}, we will use similar constructions
to give lower bounds on the chromatic threshold of the family of $A$-free hypergraphs, for several other hypergraphs $A$.  For some of
these constructions, we will need a more general variant of the Kneser hypergraph, which we explore in this section.

Sarkaria~\cite{kn-sar90} considered the generalized Kneser hypergraph $\kneser^r_s (n,k)$, which is the $r$-uniform hypergraph with vertex
set $\binom{[n]}{k}$, in which $r$ vertices $F_1,\dots, F_r$ form an edge if and only if no element of $[n]$ is contained in more than $s$ of
them. Note that the Kneser hypergraph  $\kneser^r(n, k)$ is $\kneser^r_1(n, k)$. Sarkaria~\cite{kn-sar90} and Ziegler~\cite{kn-zie02}
gave lower bounds on the chromatic number of $\kneser^r_s (n,k)$, but Lange and Ziegler~\cite{kn-lan-zie07} showed that the lower bounds
obtained by Sarkaria and Ziegler apply only if one allow the edges of $\kneser^r_s (n,k)$ to have repeated vertices. We conjecture that for $\kneser^r_s (n,k)$,
a statement similar to Theorem~\ref{kneserlower} is true. 

\newtheorem{newkneserconj}[thmctr]{Conjecture}
\begin{newkneserconj}\label{newkneserconj}
There exists $T(r,s,t)$ such that if $n\geq T(r,s,t)+rk/s$, then
$\chi\left(\kneser^r_s(n,k)\right)\geq t$. \qed
\end{newkneserconj}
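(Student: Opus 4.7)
The plan is to follow the topological paradigm introduced by Lov\'{a}sz for Kneser's conjecture and extended by Alon-Frankl-Lov\'{a}sz to prove Theorem~\ref{kneserlower}. Suppose for contradiction that $\kneser^r_s(n,k)$ admits a proper coloring $c$ with $t-1$ colors. The goal is to produce an $r$-edge $\{F_1, \ldots, F_r\}$ (with the $F_i$ pairwise distinct $k$-subsets of $[n]$) such that every element of $[n]$ lies in at most $s$ of the $F_i$ and all $F_i$ receive the same color under $c$. The dependence on $n$ in the conjecture, of the form $rk/s$ rather than $rk$, strongly suggests that a ``multiplicity'' or ``copy'' trick is the right way to reduce to the Alon-Frankl-Lov\'{a}sz theorem.

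Concretely, the reduction I would attempt is to replace each element $i \in [n]$ by $s$ copies, producing a ground set of size $sn$. Pairwise disjoint $k$-subsets of these $sn$ copies correspond to $r$-tuples of $k$-subsets of $[n]$ in which no element of $[n]$ appears more than $s$ times, which is exactly the defining condition for an edge of $\kneser^r_s(n,k)$ (with possible repetitions among the $F_i$). Lifting $c$ to a coloring of the copy-$k$-subsets and applying Theorem~\ref{kneserlower} to $\kneser^r(sn,k)$ immediately produces a monochromatic ``edge with repetitions'' provided $sn \geq (t-1)(r-1) + rk$, which rearranges to $n \geq \frac{(t-1)(r-1)}{s} + \frac{rk}{s}$, matching the conjectured form with $T(r,s,t) = \lceil (t-1)(r-1)/s\rceil$.

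The main obstacle, as emphasized by Lange and Ziegler, is that the lifted argument genuinely can return an $r$-tuple in which two of the projected $F_i$ coincide, so the output is not a valid edge of $\kneser^r_s(n,k)$ in the strict sense used in this paper. To repair this, the plan would be to budget an additional additive slack into $T(r,s,t)$ and then eliminate the ``diagonal'' (repeated-vertex) configurations, either by a perturbation argument that randomizes which copy of each element is used by each $F_i$, or by replacing the Borsuk-Ulam step with an equivariant argument on a configuration-type complex (for example, using a $\mathbb{Z}_r$-Tucker lemma on a chessboard-like subcomplex) so that the monochromatic edge is forced to come from a free orbit rather than a fixed orbit. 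The core difficulty is precisely that the standard topological lower bounds for $\kneser^r_s(n,k)$ are known to be valid only when repeated vertices are allowed, so settling the conjecture requires a genuinely new ingredient at this diagonal-avoidance step; I expect this to be the dominant technical hurdle, and the correct growth rate of $T(r,s,t)$ is likely dictated by how much equivariant connectivity must be sacrificed to stay off the diagonal.
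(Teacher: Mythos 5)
The statement you are attempting to prove is posed as a conjecture in the paper, not a theorem: the authors offer no proof, and they explicitly cite Lange and Ziegler's observation that the topological bounds of Sarkaria and Ziegler apply only to the relaxed version of $\kneser^r_s(n,k)$ in which edges may repeat vertices. Your proposal correctly sets up the lift to $\kneser^r(sn,k)$, correctly derives the rearranged threshold $n \ge (t-1)(r-1)/s + rk/s$, and, most importantly, correctly identifies the diagonal (coincident $F_i$) configurations as the genuine obstruction; this is exactly the Lange--Ziegler point the paper flags. So you have not proved the conjecture, but your diagnosis of why it remains open matches the paper's. One additional wrinkle you pass over quickly: a $k$-subset of the duplicated ground set $[sn]$ need not project to a $k$-subset of $[n]$ (it may use several copies of the same element), so the lift of the coloring $c$ is only defined on injectively-projecting $k$-subsets, and one cannot directly invoke Theorem~\ref{kneserlower} on $\kneser^r(sn,k)$ without first extending $c$ over the remaining vertices in a way that does not create spurious monochromatic edges.

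What the paper actually proves is the much weaker Theorem~\ref{newKneser}, by an entirely non-topological route restricted to $s = r-1$ and to $n = (r/s + c)k$ for a fixed constant $c > 0$, so the slack is linear in $k$ rather than a constant $T(r,s,t)$. Their argument takes the $t$ color classes $\F_1,\dots,\F_t$ of a hypothetical proper coloring, observes that each is $r$-wise intersecting (this is where $s = r-1$ enters), upward-closes each to a monotone increasing family $\F_i^*$, and works with the $p$-biased measure $W[\cdot]$ at $p = s/r$. Iterating the FKG inequality against the Frankl--Tokushige bound $W[\F_i^*] \le 1 - 1/r$ yields $W[\cup_i \F_i^*] \le 1 - 1/r^t$; but the complement of the union lies entirely among sets of size less than $k$, whose $p$-biased measure is exponentially small in $k$ by Chernoff once $n > rk/s$. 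The contradiction is purely measure-theoretic and never exhibits a monochromatic edge, so the diagonal issue simply does not arise, at the price of a far weaker quantitative conclusion. Your topological route, if the diagonal-avoidance step could be repaired, would be strictly stronger; as you rightly say, that repair is precisely what is missing.
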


The following much weaker statement is sufficient for our purposes. The proof is similar to an argument of Szemer{\'e}di which
appears in a paper of Erd\H{o}s and Simonovits~\cite{cr-erdos73},
and the proof of Claim $1$ is motivated by an argument of Kleitman~\cite{Kleitman66}.

\newtheorem{newKneser}[thmctr]{Theorem}
\begin{newKneser} \label{newKneser}
Let $c>0$; then for any integers $r,t$, there exists $K_0=K_0(c,r,t)$ such that if $k\geq K_0$, $s=r-1$, and $n=(r/s+c)k$, then $\chi\left(\kneser^r_s(n,k)\right)> t$.
\end{newKneser}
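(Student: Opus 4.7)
The plan is to proceed by contradiction: suppose $\chi(\kneser^r_{r-1}(n,k)) \le t$ and let $\mathcal{F}_1,\ldots,\mathcal{F}_t$ be a proper coloring. Unwinding the definition of $\kneser^r_{r-1}(n,k)$, each color class $\mathcal{F}_i$ is independent if and only if every $r$ of its members share a common element of $[n]$; equivalently, each $\mathcal{F}_i$ is an $r$-wise intersecting family of $k$-subsets of $[n]$.

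The central technical step will be a Kleitman-style stability claim: for every $\eta>0$ there exists $K_1=K_1(c,r,\eta)$ such that whenever $k\ge K_1$ and $n=(r/(r-1)+c)k$, every $r$-wise intersecting family $\mathcal{F}\subseteq\binom{[n]}{k}$ is either \emph{small}, meaning $|\mathcal{F}|\le \eta\binom{n-1}{k-1}$, or \emph{$\eta$-concentrated near a star}, meaning there exists $x\in[n]$ with $|\{F\in\mathcal{F}:x\notin F\}|\le \eta|\mathcal{F}|$. I plan to prove this via a refinement of Katona's cyclic method: in a random cyclic ordering of $[n]$, a family of pairwise intersecting cyclic $k$-intervals has at most $k$ members, and $r$-wise intersecting implies pairwise, so averaging recovers the Frankl bound $|\mathcal{F}|\le\binom{n-1}{k-1}$. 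To upgrade to stability, one exploits the slack $ck$ above the Frankl threshold $rk/(r-1)$: unless $\mathcal{F}$ clusters near a single element, a positive fraction of cyclic orderings contribute strictly fewer than $k$ intervals from $\mathcal{F}$, and summing these savings yields the improvement. This parallels the structure of Szemer\'edi's argument in~\cite{cr-erdos73} together with Kleitman's approach to non-disjoint subset families.

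Granted the claim, choose $\eta$ to be any positive constant smaller than
\[
\frac{(r/(r-1)+c)\,\rho^{-t}}{r/(r-1)+c+t}, \qquad \rho:=\frac{r/(r-1)+c}{1/(r-1)+c}>1,
\]
and set $K_0(c,r,t):=K_1(c,r,\eta)$. For $k\ge K_0$, apply the claim to each $\mathcal{F}_i$. Let $B\subseteq[t]$ index the colors in the star-concentrated case, with centers $x_i\in[n]$, and let $X=\{x_i:i\in B\}$, so $|X|\le t$. Every $k$-subset $F\subseteq[n]\setminus X$ misses every star center, so if $F\in\mathcal{F}_i$ then either $i\notin B$ (and there are at most $\eta\binom{n-1}{k-1}$ such $F$ in $\mathcal{F}_i$) or $i\in B$ and $F$ belongs to the exceptional part $\{G\in\mathcal{F}_i:x_i\notin G\}$ of $\mathcal{F}_i$, which has size at most $\eta|\mathcal{F}_i|$. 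Summing,
\[
\binom{n-t}{k}\le\biggl|\binom{[n]\setminus X}{k}\biggr|\le \sum_{i\in B}\eta|\mathcal{F}_i|+\sum_{i\notin B}\eta\binom{n-1}{k-1}\le \eta\binom{n-1}{k-1}\Bigl(\tfrac{n}{k}+t\Bigr).
\]
Dividing by $\binom{n-1}{k-1}$ and taking $k$ large enough that $\binom{n-t}{k}/\binom{n-1}{k-1}$ is within a factor of two of its limit $(r/(r-1)+c)\rho^{-t}$ contradicts the choice of $\eta$.

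The hard part will be the stability claim itself. The Frankl bound $|\mathcal{F}|\le\binom{n-1}{k-1}$ is tight (achieved by a star), so the near-star structure cannot be deduced from a single extremal inequality and must be extracted from the $r$-wise intersecting condition through a delicate cyclic/Kleitman counting argument that tracks how the contributions of different cyclic arrangements depend on the slack $ck$ above the Frankl threshold. The counting deduction from the claim, by contrast, is a routine pigeonhole.
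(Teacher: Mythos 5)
The stability dichotomy at the heart of your argument is false, and this is a fatal gap. You claim that for every $\eta>0$, every $r$-wise intersecting family $\mathcal{F}\subseteq\binom{[n]}{k}$ with $n=(r/(r-1)+c)k$ and $k$ large is either small ($|\mathcal{F}|\le\eta\binom{n-1}{k-1}$) or $\eta$-concentrated near a single star. Consider $\mathcal{F} = \{F\in\binom{[n]}{k} : |F\cap[2r]|\ge 2r-1\}$. By pigeonhole, any $r$ members of $\mathcal{F}$ miss at most $r<2r$ elements of $[2r]$ in total, so they have a common element; hence $\mathcal{F}$ is $r$-wise intersecting. Its size is $\binom{n-2r}{k-2r} + 2r\binom{n-2r}{k-2r+1}$, which is a positive constant fraction of $\binom{n-1}{k-1}$ because $k/n\to 1/(r/(r-1)+c)$ is a constant strictly between $0$ and $1$ (for $r=3$ and small $c$ it is more than half of $\binom{n-1}{k-1}$). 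Yet for each $x\in[2r]$ the set $\{F\in\mathcal{F}:x\notin F\}=\{F:[2r]\setminus\{x\}\subseteq F\}$ has size $\binom{n-2r}{k-2r+1}$, a constant positive fraction of $|\mathcal{F}|$, and the same is true for $x\notin[2r]$. So $\mathcal{F}$ is neither small nor $\eta$-concentrated near any star once $\eta$ falls below a fixed constant depending only on $r$ and $c$ --- whereas your finishing pigeonhole requires the dichotomy at $\eta\approx\rho^{-t}$, exponentially small in $t$. The Katona-cyclic heuristic you sketch could at best give rough stability in a narrow window near the Frankl maximum; it cannot produce a small-versus-star dichotomy because no such dichotomy holds at this density, as this junta-type family shows.

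The paper avoids structural classification of individual color classes entirely. It passes to the upward closures $\mathcal{F}_i^*$, applies the Frankl--Tokushige weighted bound $W[\mathcal{F}]\le 1-1/r$ for $r$-wise intersecting families (where $W[\mathcal{F}]=\sum_{F\in\mathcal{F}}w^{|F|}(1-w)^{n-|F|}$ with $w=1-1/r$), and then uses the FKG inequality to combine the color classes multiplicatively: since $\bigcup_{i\le\ell}\mathcal{F}_i^*$ is increasing and $\overline{\mathcal{F}_{\ell+1}^*}$ is decreasing, $W[\bigcup_{i\le\ell+1}\mathcal{F}_i^*]\le 1-1/r^{\ell+1}$ by induction. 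The complement therefore has weight at least $1/r^t$, but it consists of sets of size below $k$, which is bounded below the mean $wn$, so Chernoff makes its weight exponentially small in $k$ --- a contradiction once $k$ is large. Note in particular that my counterexample family $\mathcal{F}$ is perfectly compatible with the paper's argument: it has $W[\mathcal{F}^*]$ strictly below $1-1/r$, and the FKG step does the rest. This is exactly the kind of global measure-theoretic control your approach cannot extract from a one-family-at-a-time stability statement.
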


Before we prove this theorem, we need two definitions. A family $\F$ of subsets of $[n]$ is \emph{monotone decreasing} if $F\in \F$ and $F'\subseteq F$ imply $F'\in \F$. Similarly, it is \emph{monotone increasing} if $F\in \F$ and $F\subseteq F'$ imply $F'\in \F$.

\begin{proof}[Proof of Theorem~\ref{newKneser}]
Fix an integer $t$.  We would like to prove that if $k$ is large enough then it is impossible
to $t$-color $\kneser^r_s(n,k)$.  So let $k$ be some integer and assume
$\kneser^r_s(n,k)$ can be $t$-colored. Then the $k$-subsets of $[n]$ can be divided into $t$ families, $\F_1,\dots, \F_t$, such that $F_1\cap\cdots\cap F_{r}\ne\emptyset$ for all distinct $F_1,\dots, F_r\in \F_i,1\le i\le t$. For $1\leq i\leq t$, let $\F_i^*=\{A:A\subseteq[n],\exists F\in \F_i\textrm{ such that }F\subseteq A\}$. Then $\F_1^*,\dots,\F_t^*$ are monotone increasing families of subsets of $[n]$. Let $w=s/r$;
since $s= r - 1$, $w = 1 - 1 /r$.  For a family $\F$ of subsets of $[n]$, define the weighted size $W[\F]$ of $\F$ by
$$W[\F]=\sum_{F\in\F}w^{|F|}(1-w)^{n-|F|}.$$

\noindent\emph{Claim 1:} For $1\leq \ell \leq t, W[\cup_{i=1}^{\ell}\F_i^*]\le 1-1/r^{\ell}$.

\begin{proof}
 We prove this by induction on $\ell$. For $\ell=1$, Frankl and Tokushige~\cite{kn-frankl03} showed that for a family $\F$ of subsets of $[n]$, if $F_1\cap\cdots\cap F_{r}\ne\emptyset$ for all distinct $F_1,\dots, F_r\in \F$, then
$W[\F]\le w = 1-1/r$. Now assume that the statement is true for $\ell$. Let $U = \cup_{i=1}^\ell\F_i^*$ and $L = \overline{\F_{\ell+1}^*}$. Then $W[U]\le 1-1/r^{\ell}$, $U$ is a monotone increasing family of subsets of $[n]$, and $L$ is a monotone decreasing family of subsets of $[n]$. By the FKG Inequality, 
$$W[U\cap L]\le W[U]W[L].$$
Then 
\begin{displaymath}
\begin{array}{ccccc}W[\cup_{i=1}^{\ell+1}\F_i^*] & = & W[U\cap L]+W[\F_{\ell+1}^*] & \leq & W[U]W[L]+W[\F_{\ell+1}^*] \\
 & \leq & (1-1/r^\ell)W[L]+W[\F_{\ell+1}^*] & = & 1-(1-W[\F_{\ell+1}^*])/r^{\ell}.
 \end{array}
\end{displaymath}

Since $W[\F_{\ell+1}^*]\le w = 1-1/r$, we have $1-(1-W[\F_{\ell+1}^*])/r^{\ell}\le 1-1/r^{\ell+1}$, so $W[\cup_{i=1}^{\ell+1}\F_i^*]\le 1-1/r^{\ell+1}$.
\end{proof}

Now we know that $W[\cup_{i=1}^t\F_i^*]\le 1-1/r^t$, so $W[\overline{\cup_{i=1}^t\F_i^*}]\ge 1/r^t$. We also know that $\overline{\cup_{i=1}^t\F_i^*}$ is the family of subsets of $[n]$ whose size is less than $k = n/(r/s+c)$, so 
$$W[\overline{\cup_{i=1}^t\F_i^*}]=\sum_{i<\frac{n}{r/s+c}}\binom{n}{i}w^i(1-w)^{n-i}.$$

Since $wn=\frac{n}{r/s}>\frac{n}{r/s+c}$, by Chernoff's inequality we have
$$ \sum_{i<\frac{n}{r/s+c}}\binom{n}{i}w^i(1-w)^{n-i}\le
  e^{-\left(\frac{c}{r/s+c}\right)^2\frac{sn}{2r}}=e^{-\frac{c^2s}{2(r/s+c)r}k}.$$
Then if $k$ is large and $t$ is fixed, $W[\overline{\cup_{i=1}^t\F_i^*}]\leq e^{-\frac{c^2s}{2(r/s+c)r}k} < 1/r^t$ which contradicts Claim~1.
This contradiction implies that for any fixed $t$, there is no choice of $K_0$ such that for all $k > K_0$ it is possible
to $t$-color $\kneser^r_s(n,k)$.  This completes the proof.
\end{proof}

For an $r$-uniform hypergraph $A$, we want to construct an infinite sequence of $A$-free hypergraphs with $\kneser^r(n,k)$ or $\kneser^r_{r-1}(n,k)$ as a subhypergraph.  This will imply that these $A$-free hypergraphs have large chromatic number, but we must first show that for any integer $k$ and for some choice of $n = n(k)$ one of $\kneser^r(n,k)$, $\kneser^r_{r-1}(n,k)$ is $A$-free.
We now show that $\kneser^3_2(n,k)$ is $T_5$-free and $S(7)$-free under some conditions on $n$ and $k$.
Here $T_5$ is a $3$-uniform hypergraph with vertices $v_1,v_2,v_3,v_4,v_5$ and edges 
$\{v_1,v_2,v_3\}$, $\{v_1,v_4,v_5\}$, $\{v_2,v_4,v_5\}$, $\{v_3,v_4,v_5\}$, and $S(7)$ denotes the Fano plane
(the $S$ stands for Steiner Triple System.)

\newtheorem{newkneserT5free}[thmctr]{Lemma}
\begin{newkneserT5free} \label{newkneserT5free}
If $n < (3/2+1/4)k$, then $\kneser^3_2(n,k)$ is $T_5$-free.
\end{newkneserT5free}
\begin{proof}
If $n<3k/2$, then $\kneser^3_2(n,k)$ has no edge and of course is $T_5$-free.  Assume $n=(3/2+\epsilon)k$ with $0\le \epsilon<1/4$, and suppose $T_5$ is a subhypergraph of $\kneser^3_2(n,k)$. Since $\{v_1,v_4,v_5\}$, $\{v_2,v_4,v_5\}$, $\{v_3,v_4,v_5\}$ are 
edges of $T_5$, the vertices $v_1$, $v_2$, and $v_3$ all lie in $\overline{v_4 \cap v_5}$. Because $\left|\overline{v_4 \cap v_5}\right|\le 2n-2k = (1+2\epsilon)k<3k/2$, by the pigeonhole principle, $v_1 \cap v_2 \cap v_3\ne\emptyset$, which means $\{v_1,v_2,v_3\}$ is not an edge, a contradiction.
\end{proof}

\newtheorem{newkneserS7free}[thmctr]{Lemma}
\begin{newkneserS7free} \label{newkneserS7free}
If $n < (3/2+1/10)k$, then $\kneser^3_2(n,k)$ is $S(7)$-free.
\end{newkneserS7free}
\begin{proof}
  Just as in the proof of Lemma~\ref{newkneserT5free}, assume $n=(3/2+\epsilon)k$ with $0\le \epsilon<1/10$ and suppose $S(7)$ is a subhypergraph of $\kneser^3_2(n,k)$. Let $A$ be a vertex in a copy of $S(7)$ in $\kneser^3_2(n,k)$ and let $\{A,B,C\},\left\{ A,D,E \right\}, \left\{ A,F,G \right\}$ be its incident edges in the copy of $S(7)$. Then $B\cap C, D\cap E, F\cap G \subseteq \overline{A}$.  Since $\left|\overline{A} \right|=(1/2+\epsilon)k$, $\left|B\cap C\right|, \left|D\cap E\right|, \left|F\cap G\right|\ge (1/2-\epsilon)k$. Then since $3(1/2-\epsilon)>2(1/2+\epsilon)$, the pigeonhole principle implies that $B\cap C\cap D\cap E\cap F\cap G\ne \emptyset$. Now the copy of $S(7)$ cannot have an edge not containing $A$, a contradiction. 
\end{proof}

We will use Lemma~\ref{newkneserS7free} in Subsection~\ref{subsecS7} to provide a lower bound on the chromatic threshold of the family of $S(7)$-free hypergraphs.  
Similarly, we will use Lemma~\ref{newkneserT5free} in Subsection~\ref{subsecT5} to provide a lower bound on the chromatic threshold of the family of $T_5$-free hypergraphs.

\section{Open Problems and Partial Results} \label{secopenproblems}


Many open problems remain; for most $3$-uniform hypergraphs $A$ the
chromatic threshold for the family of $A$-free hypergraphs is unknown.
Interesting hypergraphs to study are those for which we know the extremal number, $ex(n,A)$,
and we will examine a few of those here along with partial results and conjectures.
We conjecture that most of the lower bounds given by the constructions
in this section are tight.

\begin{figure}
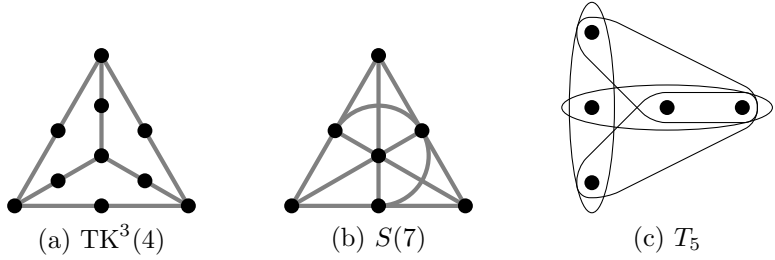

\begin{center}
\subfloat[$\tk{4}{3}$]{\tkfourfigure}
\hspace{0.3in}
\subfloat[$S(7)$]{\ssevenfigure}
\hspace{0.3in}
\subfloat[$T_5$]{\tfivefigure}
\end{center}
\caption{Assorted Hypergraphs.}
\end{figure}

\subsection{$\tkf{s}{r}$-free hypergraphs}\label{subsecTK}

For $s > r$, recall that $\tkf{s}{r}$ is the family of $r$-uniform hypergraphs such that there exists a
set $S$ of $s$ vertices where each pair of vertices from $S$ are contained together in some edge.  The set $S$
is called the set of \emph{core vertices} of the hypergraph.
Recall also that $T_{r,s}(n)$ is the complete $n$-vertex, $r$-uniform, $s$-partite hypergraph with
part sizes as equal as possible.

The last author~\cite{rt-mubayi06-2} showed that if $s > r$ then
$\ex(n, \tkf{s}{r}) = \left| T_{r,s-1}(n) \right|$ and
$\ex(n, \linebreak[1] \tk{s}{r}) = (1 + o(1)) \left| T_{r,s}(n) \right|$.  Recently,
Pikhurko~\cite{rt-pikhurko} has shown that for large $n$ and $s > r$,
$\ex(n,\tk{s}{r}) \linebreak[1] = \left| T_{r,s-1}(n) \right|$
and that $T_{r,s-1}(n)$ is the unique extremal example.
Because $F_5$ is a member of $\tkf{4}{3}$ it follows that the chromatic threshold of
$\tkf{4}{3}$-free hypergraphs is at most $(\sqrt{41}-5)/8$.
The following simple variation on the construction from Section~\ref{f5lowersection} provides a lower
bound of $18/361$ for both $\tk{4}{3}$-free
and $\tkf{4}{3}$-free hypergraphs.

\newtheorem{tkflowerlemma}[thmctr]{Proposition}
\begin{tkflowerlemma} \label{tkflowerlemma}
The chromatic threshold of $\tkf{4}{3}$-free hypergraphs is at least $\frac{18}{361}$.
\end{tkflowerlemma}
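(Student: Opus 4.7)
The plan is to construct, for every integer $t\ge 2$ and every $\epsilon>0$, an $N$-vertex $3$-uniform $\tkf{4}{3}$-free hypergraph with chromatic number at least $t$ and minimum degree at least $(1-\epsilon)\frac{18}{361}\binom{N}{2}$; by the definition of chromatic threshold this gives the claimed lower bound. The construction will be a variant of the $F_5$ lower-bound construction of Section~\ref{f5lowersection}, combining the Kneser hypergraph $\kneser^3(n,k)$ with $n=3k+2(t-1)$ and $k>2(t-1)$ (so that $n<4k$ and by Theorem~\ref{kneserlower} the chromatic number is at least $t$) together with three outer vertex classes $U$, $V$, $W$, with both $U$ and $V$ blown up over the Kneser ground set $[n]$.

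The first modification compared with the $F_5$ construction is forced because that construction is itself \emph{not} $\tkf{4}{3}$-free: three pairwise disjoint Kneser vertices $S_1,S_2,S_3$ (which exist since $n\ge 3k$) together with any $y\in V$ form a copy of $\tkf{4}{3}$, with each Kneser pair covered by the edge $\{S_1,S_2,S_3\}$ and each pair $\{S_i,y\}$ covered by a type-(b) edge $\{S_i,x,y\}$ with $x\in U_m$ for any $m\in S_i$. To break this, I would index $V$ by $[n]$ as $V=V_1\cup\cdots\cup V_n$ with $|V_j|=v/n$, and include a type-(b) edge $\{S,x,y\}$ only when $x\in U_i$, $y\in V_j$, and \emph{both} $i,j\in S$. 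Then a single $y\in V_j$ covering all three pairs $\{S_i,y\}$ would force $j\in S_1\cap S_2\cap S_3=\emptyset$.

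The second step is to balance the minimum-degree contributions by choosing $|U|=|V|=u$ and $|W|=u/9$. A direct asymptotic calculation (with $n,k$ fixed and $u\to\infty$) gives Kneser-vertex degree $\sim uv/9$ from type-(b) edges, $U$- and $V$-vertex degree $\sim uw$ from type-(c) edges of the form $\{x,y,z\}$ with $x\in U$, $y\in V$, $z\in W$, and $W$-vertex degree $\sim uv$. Setting $uv/9=uw$ with $u=v$ forces $w=u/9$ and yields minimum degree $\sim u^2/9$, while $N=2u+u/9+\binom{n}{k}\to 19u/9$, so that $\delta/\binom{N}{2}\to 2(u^2/9)/(19u/9)^2=18/361$, matching the target ratio up to the factor $(1-\epsilon)$.

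The main obstacle is verifying that the modified hypergraph is $\tkf{4}{3}$-free, which I would do by a case analysis on the distribution of the four core vertices among the classes $K$ (Kneser), $U$, $V$, $W$. Four Kneser vertices are ruled out by $n<4k$, since the Kneser pairs force pairwise disjointness. Two core vertices in the same outer class are impossible because no edge contains two vertices from a common class in $\{U,V,W\}$. A Kneser-$W$ pair is never in any edge, so any mix containing both types fails. The only remaining case is three Kneser vertices plus one outer vertex from $U$ or $V$, say $y\in V_j$ (the $U$-case is symmetric); covering each pair $\{S_i,y\}$ via a type-(b) edge forces $j\in S_i$ for all $i=1,2,3$, contradicting the pairwise disjointness of $S_1,S_2,S_3$ required to cover the Kneser pairs. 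All remaining mixed configurations with at most one core vertex in $K$ degenerate to one of the previous impossibilities since only three outer classes are available.
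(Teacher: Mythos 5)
Your construction and analysis are essentially the same as the paper's: the same Kneser hypergraph $\kneser^3(n,k)$ with $n<4k$, the same three outer classes with both $U$ and $V$ partitioned over $[n]$, the same balancing $|U|=|V|=u$ and $|W|=u/9$ yielding the ratio $18/361$, and a case analysis on the placement of the four core vertices. Your explicit remark that the $F_5$ construction itself fails to be $\tkf{4}{3}$-free (three pairwise disjoint Kneser vertices together with any $y\in V$), motivating the additional partition of $V$, is a nice clarification the paper does not spell out.

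There is, however, a small gap in the case enumeration. After ruling out four Kneser core vertices, two core vertices in a common outer class, and any Kneser--$W$ pair, the surviving placements are not only three in $\kneser^3(n,k)$ plus one in $U\cup V$ but also \emph{two} in $\kneser^3(n,k)$ plus one in $U$ and one in $V$; this case has exactly two core vertices among the Kneser vertices and so is not covered by your ``at most one core vertex in $K$'' clause. The fix is immediate --- your argument for the three-in-$K$ case really only uses two of the Kneser core vertices (necessarily disjoint, since the only edges containing two Kneser vertices are Kneser edges) together with the one vertex in $U$ or $V$, and $i\in c_1\cap c_2=\emptyset$ gives the same contradiction --- but as written the enumeration omits this case. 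The paper sidesteps this by arguing directly that at least two core vertices must lie in $\kneser^3(n,k)$ (deleting the Kneser edges leaves a $3$-partite hypergraph, which is $\tkf{4}{3}$-free), and then observing that any further core vertex in $U\cup V$ already yields the intersection contradiction, while $W$-vertices share no edge with Kneser vertices at all.
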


\begin{proof}
The proof is very similar to the proof in Section~\ref{f5lowersection}, we only sketch it here.
Choose $k,n,u,v,w,U, V, W$ as in the proof of the lower bound of Theorem~\ref{mainF5thm} in Section~\ref{f5lowersection};
that is $k,n,u,v,w$ are integers with $n \ll u,v,w$ and $U,V,W$ are disjoint sets of vertices of size $u,v,w$
respectively.  Divide $U$ into $U_1, \ldots, U_n$ so
that $\left| U_i \right| = u/n$ and divide $V$ into $V_1, \ldots, V_n$ such that
$\left|V_i \right| = v/n$.
Let $H$ be the hypergraph formed by taking $\kneser^3(n,k)$ and adding the complete
$3$-partite hypergraph on $U,V,W$ and the following edges.
For $S \in V(\kneser^3(n,k))$ and $x \in U_i$ and $y \in V_j$, make $\left\{ S, x, y \right\}$
an edge if $i,j \in S$.  The minimum
degree is maximized when $u=v$ and $w = u/9$, which gives minimum degree
approximately $uv/9 \approx \frac{18}{361}\cdot \binom{N}{2}$, where $N = u+v+w+\binom{n}{k}$
is the number of vertices in the hypergraphs.

Let $F$ be any hypergraph in $\tkf{4}{3}$ and assume that $F$
is a subhypergraph of $H$ in which $c_1, c_2, c_3, c_4$ are the four core vertices.
Because any $3$-partite hypergraph is $\tkf{4}{3}$-free,
it is easy to see that some edge of $F$ must lie in $\kneser^3(n,k)$, and so there must
be at least two core vertices in $\kneser^3(n,k)$.
If $c_1, c_2 \in \kneser^3(n,k)$ and $c_3 \in U \cup V$
then $c_3$ is in either $U_i$ or $V_i$ for some $i$.
But then $i \in c_1 \cap c_2$ (recall that vertices in $\kneser^3(n,k)$ are $k$-sets) which
contradicts the fact that $c_1$ and $c_2$ are contained together
in some edge of $\kneser^3(n,k)$.  Thus  all four core vertices must be in
$\kneser^3(n,k)$, which is not possible because $n < 4k$.
\end{proof}

This gives lower bounds on the chromatic thresholds of
$\tk{4}{3}$-free and $\tkf{4}{3}$-free hypergraphs and leads to the following questions.

\newtheorem{tkfquestion}[thmctr]{Question}
\begin{tkfquestion}
What is the chromatic threshold for $\tk{4}{3}$-free hypergraphs? It is between $18/361$
and $2/9$.  What is the chromatic
threshold for $\tkf{4}{3}$-free hypergraphs?  It has the same lower bound as for $\tk{4}{3}$-free
hypergraphs, and because $F_5 \in \tkf{4}{3}$ the upper bound is $(\sqrt{41}-5)/8$.
\end{tkfquestion}

\noindent A similar construction provides a $\tkf{s}{3}$-free hypergraph for any $s \geq 5$.
We have not optimized the values.

\newtheorem{tkflowerlemma2}[thmctr]{Lemma}
\begin{tkflowerlemma2} \label{tkflowerlemma2}
When $s \geq 5$, the chromatic threshold of $\tkf{s}{3}$-free hypergraphs is at least
$\frac{(s-2)(s-3)(s-4)^2}{(s^2 - 13)^2} = 1- \frac{13}{s} + O(\frac{1}{s^2})$.
\end{tkflowerlemma2}

\begin{proof}
Fix $t \geq 2$, $k \geq 2t$, and let $n = 3k + 2(t-1)$.  Notice that $n < 4k$.
By Theorem~\ref{kneserlower}, the chromatic number of $\kneser^3(n, k)$ is therefore
at least $t$. Fix $N \gg \binom{n}{k}$.

Partition $N$ vertices into one part of size $u$ and $s-2$ parts of size $x$, for
some $u$ that is divisible by $n$.  Include as an edge each triple that has at
most one vertex in each part.  Further partition the part of size $u$ into $n$ sets,
$U_1, \dots, U_n$, each of size $u/n$. From the remaining $s-2$ parts of size $x$,
choose two and designate them $W_1, W_2$; label the remaining $s-4$ parts
$V_1, \dots, V_{s-4}$.
Let $H$ be the $3$-uniform hypergraph
formed by taking the disjoint union of $\kneser^3(n,k)$ and the above complete $(s-1)$-partite
hypergraph, and adding the following edges.
If $S \in V(\kneser^3(n,k))$, $v \in V_i$, and $v' \in V_j$ for $i \neq j$,
add the edge $\{S, v, v'\}$.
If $S \in V(\kneser^3(n,k))$ and $u \in U_i$ and $v \in V_j$ then add the edge
$\{S, u, v\}$ if and only if $i \in S$.

\noindent Notice that $H$ has chromatic number at least $t$, and that
$V(H) = N + \binom{n}{k}$.

\medskip
\noindent\emph{Claim 1:} $H$ contains no element of $\tkf{s}{3}$
as a subgraph.
\begin{proof} Suppose there is such a
subgraph; then at least one core vertex must be contained in $ V(\kneser^3(n,k))$, because an
$(s-1)$-partite
graph is $\tkf{3}{s}$-free.  In that case, no core vertex can be in $W_1 \cup W_2$ because
there is no edge that contains a vertex from $W_1 \cup W_2$ as well as a vertex from $ V(\kneser^3(n,k))$.
There must therefore be at least $3$ core vertices in $ V(\kneser^3(n,k))$, which means that two of
them must appear in an edge contained within $ V(\kneser^3(n,k))$.  Suppose they are $S_1, S_2$.
If another core vertex is in
$U$, say $u \in U_i$, then there must be an edge of $H$ containing $u$ and $S_1$,
and there must be an edge containing $u$ and $S_2$.  This implies that $i \in S_1 \cap S_2$,
which contradicts the fact that $S_1$ and $S_2$ appear together in an edge of $\kneser^3(n,k)$.

All core vertices must therefore be in $ V(\kneser^3(n,k)) \cup V$, which means that
there must be at least four of them
in $ V(\kneser^3(n,k))$.  Because each pair of those four core vertices must appear together
in an edge, and that edge must
be in $\kneser^3(n,k)$, those four sets must be pairwise disjoint.  This is impossible
because $n < 4k$.
\end{proof}

\noindent The minimum degree of this graph is approximately
\[
\min \left\{
\frac{1}{3}(s-4)ax + \binom{s-4}{2}x^2, \binom{s-2}{2}x^2, (s-3)ax + \binom{s-3}{2}x^2
\right\}.
\]
Notice that a vertex in $W_1 \cup W_2$ has degree strictly less than a vertex in $\kneser^3(n,k)$,
and so they do not enter into the above computation.
This minimum is largest when $ u = \frac{3(2s-7)x}{s-4}$, which implies that
$x = \left(\frac{s-4}{s^2 - 13}\right) N$.  The minimum degree of $H$ is
then
\[
\frac{(s-2)(s-3)}{2}\cdot\frac{(s-4)^2}{(s^2 - 13)^2}N^2 = \left(1 - \frac{13}{x} + O\left(\frac{1}{s^2}\right) \right)\frac{N^2}{2}.
\]
\end{proof}

The construction in Lemma~\ref{tkflowerlemma2} has one part of ``type'' $U$ (which is partitioned into $n$
sets), $s-4$ parts of ``type'' $V$ (which are not partitioned, and whose vertices appear in edges
that intersect $K$), and two parts of ``type'' $W$ (which are not partitioned and have no vertices
that appear in edges intersecting $K$).
Using this strategy, one can generate similar constructions for $\tk{s}{r}$;
the above proof applies whenever there are $x$ parts of type $U$, $s - (r+1)$ parts of type
$V$, and $y$ parts of type $W$, where $x + y = r$ and $s-(r+1) + x \geq r-1$.  This last
condition is needed for the edges intersecting $K$.

\newtheorem{tkfsquestion}[thmctr]{Question}
\begin{tkfsquestion}
What is the chromatic threshold for $\tk{s}{3}$-free hypergraphs for $s > 3$? It is between
$\frac{(s-2)(s-3)(s-4)^2}{(s^2 - 13)^2} = 1 - \frac{13}{s} + O\left(\frac{1}{s^2}\right)$ and
$\left(1-\frac{1}{s-1}\right)\left(1-\frac{2}{s-1}\right) = 1 - \frac{3}{s-1} + \frac{2}{(s-1)^2}$.
The upper bound comes from  $T_{r,s-1}(n)$.
\end{tkfsquestion}

\subsection{$S(7)$-free hypergraphs}\label{subsecS7}

Next, consider the Fano plane $S(7)$.
de Caen and F\"{u}redi~\cite{tdf-decaen00} showed that
$\ex(n,S(7)) = (\frac{3}{4}+o(1))\binom{n}{3}$.
The extremal hypergraph for $S(7)$, proven to be extremal by F\"{u}redi and
Simonovits~\cite{tdf-furedi05}
and also by Keevash and Sudakov~\cite{tdf-keevash05}, is the hypergraph
formed by taking two almost equal vertex sets $U$ and $V$ and taking
all edges that have at least one vertex in each of $U$ and $V$.  We can modify
the hypergraph from Section~\ref{f5lowersection}
to obtain a lower bound on the chromatic threshold of $S(7)$-free hypergraphs.

\newtheorem{fanolowerlemma}[thmctr]{Proposition}
\begin{fanolowerlemma} \label{fanolowerlemma}
The chromatic threshold of $S(7)$-free hypergraphs is at least
$9/17$.
\end{fanolowerlemma}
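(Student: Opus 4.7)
The plan is to build, for every integer $t \geq 2$ and every $\epsilon > 0$, a 3-uniform $S(7)$-free hypergraph with chromatic number at least $t$ and minimum degree at least $(9/17 - \epsilon)\binom{N}{2}$, where $N$ is its number of vertices. The approach mirrors the lower-bound parts of Theorem~\ref{mainF5thm} and Proposition~\ref{tkflowerlemma}: combine a Kneser-type component that supplies the unbounded chromatic number with a blow-up of the extremal $S(7)$-free hypergraph that supplies the large minimum degree.

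For the chromatic ingredient, take $\kneser^3_2(n,k)$ with $n$ chosen so that $n < (3/2 + 1/10)k$; by Lemma~\ref{newkneserS7free} this is $S(7)$-free, and by Theorem~\ref{newKneser} its chromatic number exceeds $t$ once $k$ is large enough. For the density ingredient, take the bipartite 3-uniform hypergraph on two parts $U$, $V$ whose edges are all triples meeting both parts (the extremal $S(7)$-free hypergraph, of density $3/4$). The full hypergraph $H$ will live on the disjoint union $K \cup U \cup V$, with $K = V(\kneser^3_2(n,k))$ and with $U$ (and possibly $V$) partitioned into $n$ blocks indexed by $[n]$. Its edges are (i) the Kneser edges on $K$; (ii) all triples of $U \cup V$ meeting both $U$ and $V$; and (iii) carefully chosen bridge triples $\{S,x,y\}$ with $S \in K$ and $x,y \in U \cup V$, where admissibility depends on how the blocks containing $x$ and $y$ sit inside the $k$-set $S \subseteq [n]$. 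By tuning $|U|$, $|V|$, and the bridge family (possibly with $|U| \ne |V|$), the degree of a Kneser vertex can be balanced against the degrees of $U$- and $V$-vertices to yield minimum-degree ratio $9/17$ in the limit as $N \to \infty$.

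The main technical step will be verifying $S(7)$-freeness. Suppose for contradiction that a Fano plane embeds in $H$, and let $a$ denote the number of its vertices in $K$; write $a'_e$ for the number of $K$-vertices in Fano edge $e$ and $e_i$ for the number of Fano edges with $a'_e = i$. The identities
\[
\sum_{e} \binom{a'_e}{2} = \binom{a}{2}, \qquad \sum_{e} a'_e = 3a, \qquad \sum_{e} 1 = 7,
\]
together with the non-negativity of the $e_i$ and the restricted set of values that $a'_e$ can take (determined by the bridge structure in (iii)), leave only a few candidate values of $a$. The case $a = 0$ is ruled out because the restriction of $H$ to $U \cup V$ is $2$-colorable while $S(7)$ is not; the case $a = 7$ is ruled out by Lemma~\ref{newkneserS7free}; the intermediate cases will be eliminated by a case analysis that exploits the defining property $S_1 \cap S_2 \cap S_3 = \emptyset$ of a Kneser edge of $\kneser^3_2(n,k)$ combined with the bridge constraints.

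The hard part will be choosing the bridge family in (iii) and the sizes of $U$ and $V$ so as to push the minimum-degree ratio above the easier bound of $1/2$ that a naive construction with unrestricted bridges $\{S,u,v\}$ and equal part sizes produces; once the right bridge structure is identified, the minimum-degree calculation will be a routine optimization in $|U|$ and $|V|$.
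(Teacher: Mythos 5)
Your high-level plan (Kneser component for chromatic number, bipartite $S(7)$-free component for density, bridges between them) is the same as the paper's, but your concrete choice of $\kneser^3_2(n,k)$ is a misstep that leaves a real gap. The paper's construction uses $\kneser^3(n,k)$ with $n=(3+\epsilon)k$ (so the ``block density'' $k/n$ is $\approx 1/3$), not $\kneser^3_2$. The reason is that the $S(7)$-freeness proof leans crucially on the fact that a Kneser edge $\{A,B,C\}$ in $\kneser^3(n,k)$ has \emph{pairwise} disjoint vertices. Concretely: the paper's bridge edges are $\{X,u,u'\}$ with both $u,u'\in U$, $u\in U_i$, $u'\in U_j$, and $i,j\in X$ (together with, separately, \emph{all unrestricted} $KUV$ triples — this is where the big $|U||V|$ term in the degree of a Kneser vertex comes from). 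If two of the four $S(7)$-vertices outside the Kneser edge $\{A,B,C\}$ lie in $U$, tracing the bridge constraints forces an index into $A\cap B$, contradicting disjointness. In $\kneser^3_2(n,k)$ the edge condition is only $A\cap B\cap C=\emptyset$; pairwise intersections are allowed (and in fact forced to be large when $n\approx\frac{3}{2}k$), so this argument breaks. There is even a concrete bad configuration: $A,B,C\in K$, $u_1,u_2,u_3\in U$, $v\in V$, with Fano edges $\{A,B,C\}$, $\{A,u_1,u_2\}$, $\{B,u_1,u_3\}$, $\{C,u_2,u_3\}$, $\{A,u_3,v\}$, $\{B,u_2,v\}$, $\{C,u_1,v\}$. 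The three $KUU$ bridges only force $i_1\in A\cap B$, $i_2\in A\cap C$, $i_3\in B\cap C$, which is perfectly consistent with $A\cap B\cap C=\emptyset$; so this Fano plane can embed and the construction is not $S(7)$-free.

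A secondary issue is that your bridge family is not pinned down, and the ``admissibility depends on blocks containing $x$ \emph{and} $y$'' heuristic points you toward restricted $KUV$ edges, which is the wrong shape. The paper instead keeps all $KUV$ triples unrestricted (these give the Kneser vertices a $|U||V|$ degree contribution, matching the $U$- and $V$-degrees), and only the $KUU$ bridges carry the index restriction, contributing roughly $\binom{|U|/3}{2}$ to the Kneser degree. Optimizing $|U|=\tfrac{9}{17}N$, $|V|=\tfrac{8}{17}N$ then gives minimum degree $\approx\frac{9}{34}N^2\approx\frac{9}{17}\binom{N}{2}$. If you switch to $\kneser^3(n,k)$ with $n=(3+\epsilon)k$, use $\tkop$heorem~\ref{kneserlower} (not Theorem~\ref{newKneser}) for the chromatic number, and adopt the asymmetric $KUU$-only bridge rule with unrestricted $KUV$ triples, your sketch becomes essentially the paper's proof.
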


\begin{proof}
Fix $t\geq 2$ and $0<\epsilon \ll 1$.  Then by Lemma~\ref{kneserlower} there exists $k$ sufficiently large
that if $n = (3 + \epsilon)k$ then $\kneser^3(n,k)$ has chromatic number at least $t$.  Fix
such a $k$, and fix $N \gg \binom{n}{k}$.

Partition $N$ vertices into two sets, $U$ and $V$, with $|U| = 9N/17$ and $|V| = 8N/17$.
Further partition $U$ into $n$ parts, $U_1,\ldots,U_n$, each of size $|U|/n$.
Include as an edge each triple that has at least one vertex in each of $U$, $V$.  Let $H$ be
the hypergraph formed by taking the disjoint union of this hypergraph and $\kneser^3(n,k)$ and
adding the following edges. For $u \in U_i$, $u' \in U_j$, and $X\in V(\kneser^3(n,k))$ include
$\{X,u,u'\}$ as an edge if $i,j \in X$ (recall that vertices in $\kneser^3(n,k)$ are subsets of $[n]$).
Let $K = V(\kneser^3(n,k))$. Notice that $H$ has chromatic number at least $t$, and that
$V(H) = N + \binom{n}{k}$.

\medskip
\noindent\emph{Claim 1:} $H$ contains no subhypergraph isomorphic to $S(7)$.
\begin{proof}
First notice that $\kneser^3(n,k)$ is $S(7)$-free because every pair of vertices in $S(7)$
are in an edge, which would require there to be $7$ pairwise-disjoint $k$-subsets of $[n]$.
Because $n = (3 +\epsilon)k$, this would be a contradiction.
It is easy to see, by considering the partition $U,(K\cup V)$, that if $H$ contains a copy of
$S(7)$ then it must involve an edge from $H[K]$ (otherwise the extremal $S(7)$-free hypergraph also
contains a copy of $S(7)$). Call this edge $\{A, B, C\}$.

There are four vertices in $S(7) \setminus \{A, B, C\}$, and at least one must be outside
$K$. No more than one can be in $V$ because there is no edge with one vertex in $K$ and
two in $V$.  No more than one can be in $U$ otherwise one of $A \cap B$, $A \cap C$, $B \cap C$
is non-empty, which contradicts the assumption that $\{A, B, C\}$ is an edge of $H[K]$.
Therefore, there must be either $5$ or $6$ vertices of $S(7)$ in $K$. Suppose $v$ is a vertex of $S(7)$
that is outside of $K$.  Then $v$ appears in three edges that overlap only at $v$, say $\{v, S_1, S_2\}$,
$\{v, S_3, S_4\}$, and $\{v, S_5, S_6\}$.  At least one of these edges must contain two vertices from $K$,
but there is no such edge in $H$.
\end{proof}

The minimum degree of $H$ is at least
\[ \min\left\{|U||V|+\binom{|U|/3}{2}, |U||V|+\binom{|U|}{2}, |U||V|+\binom{|V|}{2} \right\} = \frac{9}{34}N^2-\frac{3}{34}N. \]
\end{proof}

\newtheorem{fanoquestion}[thmctr]{Question}
\begin{fanoquestion}
What is the chromatic threshold of $S(7)$-free hypergraphs?  It is at least
$9/17$ and at most $3/4$, where the upper bound is from the extremal hypergraph
of $S(7)$.
\end{fanoquestion}

\subsection{$T_5$-free hypergraphs}\label{subsecT5}

Recall that the $3$-uniform hypergraph $T_5$ has vertices $A, B, C, D, E$ and edges $\{A, B, C\}$,
$\{A, D, E\}$, $\{B, D, E\}$, and $\{C, D, E\}$.

Let $B^3(n)$ be the $3$-uniform hypergraph with the most edges among all $n$-vertex $3$-graphs
whose vertex set can be partitioned into $X_1, X_2$ such that each edge contains
exactly one vertex from $X_2$. F\"{u}redi, Pikhurko, and Simonovits~\cite{fur-pik-sim}
proved that for $n$ sufficiently large the extremal $T_5$-free hypergraph is $B^3(n)$. It follows that
the chromatic threshold for the family of $T_5$-free hypergraphs is at most $4/9$.

\newtheorem{T5lowerlemma}[thmctr]{Proposition}
\begin{T5lowerlemma} \label{T5lowerlemma}
The chromatic threshold of $T_5$-free hypergraphs is at least $16/49$.
\end{T5lowerlemma}

\begin{proof}
Fix $t\geq 2$ and $0<\epsilon \ll 1$.  Then by Lemma~\ref{kneserlower} there exists $k$ sufficiently large
that if $n = (3/2 + \epsilon)k$ then $\kneser^3_2(n,k)$ has chromatic number at least $t$.  Fix
such a $k$, and fix $N \gg \binom{n}{k}$.

Partition $N$ vertices into two parts, $U$ and $V$, with $|U| = 4N/7$ and $|V| = 3N/7$. Further
partition $U$ into $n$ parts, $U_1,\dots,U_n$, each of size $|U|/n$.  Include as an edge any triple
with two vertices in $U$ and one in $V$.  Let $H$ be the hypergraph formed by taking the disjoint
union of this graph and $\kneser^3_2(n,k)$ and including the following edges.  If
$X \in V(\kneser^3_2(n,k))$ and $u\in U_i$ and $v \in V$ then let $\{u,v,X\}$ be an edge if
$i \in X$ (recall that vertices of $\kneser^3_2(n,k)$ are subsets of $[n]$).  Let
$K = V(\kneser^3_2(n,k))$.  Notice that $H$ has chromatic number at least $t$, and that
$V(H) = N + \binom{n}{k}$.

\medskip
\noindent\emph{Claim 1:} $T_5$ is not a subhypergraph of $H$.
\begin{proof}
Let $H'$ be the hypergraph obtained from $H$ by deleting all edges contained in $K$, and let
$X_1 = K \cup U$ and $X_2 = V$.  It is now easy to see that $H'$ is a subhypergraph of the
extremal $T_5$-free hypergraph; if $H$ contains a copy of $T_5$ it must therefore involve an
edge from $K$.  If that edge is $\{A,D,E\}$ (see the labelling of $T_5$ above) then because
$\{B,D,E\}$ and $\{C,D,E\}$ are edges of $T_5$ it must be the case that both of $B,C$ are in $K$, but
by Lemma~\ref{newkneserT5free} $K$ does not span a copy of $T_5$.
Similarly, neither $\{B,D,E\}$ nor $\{C,D,E\}$ can be contained in $K$.

We may therefore assume that $\{A,B,C\}$ is contained in $K$.  Because $\{A,D,E\}$ is an edge, and
by Lemma~\ref{newkneserT5free}, at least one of $D,E$ is in $U$.  Suppose
that $D \in U_i$; then because $\{A, D, E\}$, $\{B, D,E\}$, and $\{C,D,E\}$ are all edges
of $T_5$ it must be the case that $i \in A \cap B \cap C$.  This contradicts the assumption that
$\{A,B,C\}$ is an edge.
\end{proof}

The minimum degree of $H$ is at least
\[ \min\left\{\frac{2|U||V|}{3}, |U||V|, \binom{|U|}{2} \right\} = \frac{8}{49}N^2-\frac{2}{7}N. \]
\end{proof}

\subsection{Co-chromatic thresholds}

There is another possibility when generalizing the definition of chromatic
threshold from graphs to hypergraphs:
we can use the co-degree instead of the degree.
Recall that if $H$ is an $r$-uniform hypergraph and
$\left\{ x_1, \ldots, x_{r-1} \right\} \subseteq V(H)$,
 then the \emph{co-degree $d(x_1, \ldots, x_{r-1})$ of $x_1,\ldots,x_{r-1}$} is
$\left| \left\{ z : \left\{ x_1, \ldots, x_{r_1}, z \right\} \in H \right\} \right|$.
Let $F$ be a family of $r$-uniform hypergraphs.
The \emph{co-chromatic threshold} of $F$ is the infimum
of the values $c \geq 0$ such that the subfamily of $F$ consisting of
hypergraphs $H$ with
minimum co-degree
at least $c \left| V(H) \right|$ has bounded chromatic number.
More generally, the \emph{$k$-degree $d(x_1, \ldots, x_k)$ of $x_1, \ldots, x_k$} is
$\left| \left\{ \left\{ z_{k+1}, \ldots, z_r \right\} : \left\{ x_1, \ldots, x_k, z_{k+1}, \ldots, z_r \right\} \in H \right\} \right|$
and we can define the $k$-chromatic threshold similarly.
Given a hypergraph $H$ and subsets $U,V,W$ of $V(H)$, we say that an edge $\{u,v,w\}$
is of type $UVW$ if $u\in U, v\in V$ and $w\in W$.

The co-chromatic thresholds of $F_5$-free hypergraphs and $\tk{4}{3}$-free
hypergraphs are trivially zero because if the minimum co-degree
of $H$ is at least $10$ then $H$
contains a copy of $\tk{4}{3}$ and a copy of $F_5$.
For the Fano plane, the last author proved~\cite{mubayi-fano} that for every $\epsilon>0$ there exists $n_0$ such that any $3$-uniform hypergraph with $n>n_0$ vertices and
minimum co-degree greater than $(1/2 + \epsilon)n$  contains a copy of $S(7)$.
In 2009, Keevash~\cite{keevash-fano} improved this by proving that any $3$-uniform hypergraph with
minimum co-degree greater than $n/2$ contains a copy of $S(7)$ for $n$ sufficiently large.
Notice that the lower bound construction for the chromatic threshold described above has
non-zero minimum co-degree but the co-degree depends on the parameter $t$.  We can modify the construction
to prove a better lower bound on the co-chromatic threshold of $S(7)$-free hypergraphs.

\newtheorem{Fanonewextremal}[thmctr]{Proposition}
\begin{Fanonewextremal}\label{Fanonewextremal}
The co-chromatic threshold of $S(7)$-free hypergraphs is at least $2/5$.
\end{Fanonewextremal}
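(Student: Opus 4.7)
The plan is to construct an infinite family of $S(7)$-free $3$-uniform hypergraphs with chromatic number tending to infinity while every pair of vertices has co-degree at least $(2/5-o(1))|V(H)|$. The target density $2/5$ comes from the balanced variant of the F\"uredi--Simonovits extremal $S(7)$-free hypergraph on parts of ratio $3:2$, where a pair of vertices inside the larger part is covered by exactly $|B|=(2/5)|V(H)|$ triples. Since the base bipartite-like construction is only $2$-chromatic, we boost the chromatic number by stacking a generalized Kneser hypergraph on one part, exactly as in the proof of Proposition~\ref{fanolowerlemma}, but rebalanced so as to control co-degrees rather than degrees.

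Concretely, given $t\ge 2$ and $\epsilon>0$, first pick $k$ large enough that by Theorem~\ref{newKneser} the generalized Kneser hypergraph $\kneser^3_2(n,k)$ with $n=(3/2+\delta)k$ and some $\delta<1/10$ satisfies $\chi(\kneser^3_2(n,k))\ge t$; by Lemma~\ref{newkneserS7free} it is also $S(7)$-free. Let $K=V(\kneser^3_2(n,k))$, whose size is constant relative to the large parameter $N$ introduced below. Partition $N$ further vertices into $A\cup B$ with $|A|=3N/5$ and $|B|=2N/5$, and subdivide $A$ into equal blocks $A_1,\dots,A_n$ indexed by $[n]$. Define $H$ on $K\cup A\cup B$ with edges: (i) every edge of $\kneser^3_2(n,k)$ on $K$; (ii) every triple $\{x,y,z\}\subseteq A\cup B$ having at least one vertex in $A$ and at least one in $B$; and (iii) ``bridge'' edges of the forms $\{X,a,b\}$, $\{X,Y,b\}$, and $\{X,Y,a\}$ included under conditions relating the block indices of the $A$-participants to membership in the $k$-sets $X,Y$, designed so that every pair containing a $K$-vertex acquires co-degree approximately $2N/5$.

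I then verify that $H$ is $S(7)$-free by case analysis. If a putative copy of $S(7)\subseteq H$ uses no Kneser edge, it is contained in the bipartite-like piece on $A\cup B$, which is a subhypergraph of the F\"uredi--Simonovits extremal hypergraph and therefore $S(7)$-free; a short pigeonhole check on the distribution $(n_A,n_B)$ of the $7$ Fano points dispatches all cases just as in the $(A,B)$-analysis of Proposition~\ref{fanolowerlemma}. If on the other hand the copy of $S(7)$ uses some Kneser edge $\{X_1,X_2,X_3\}$, then combining the constraints imposed by the remaining four Fano edges through the $X_i$ with the definitions of the bridge edges in~(iii), one forces a common element in the $k$-sets $X_1,X_2,X_3$ exactly as in the proof of Lemma~\ref{newkneserS7free}, contradicting the requirement that $\{X_1,X_2,X_3\}$ be an edge of $\kneser^3_2(n,k)$. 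Finally, minimum co-degree is computed directly: pairs inside $A$ get co-degree $|B|=2N/5$ from edges of type~(ii); pairs inside $B$ or crossing $A\times B$ get co-degree $\approx N-2$; pairs of the form $(X,a)$, $(X,b)$ or $(X,Y)$ receive co-degree at least $(2/5-\epsilon)N$ by the design of~(iii). Since $\chi(H)\ge \chi(\kneser^3_2(n,k))\ge t$, letting $t\to\infty$ yields the claimed lower bound $2/5$. The main obstacle I expect is tuning step~(iii): simultaneously raising every Kneser-incident co-degree to within $\epsilon$ of $2N/5$ while preventing the creation of any Fano plane. This trade-off has to be made against the intersecting-family constraints on $K$ used in Lemma~\ref{newkneserS7free}, and is the only delicate step of the argument.
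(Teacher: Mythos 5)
You have the right shape of construction — $\kneser^3_2(n,k)$ with $n=(3/2+\epsilon)k$ glued to a $3{:}2$-ratio two-part base ($|A|=3N/5$, $|B|=2N/5$), with the $K$-to-base bridge controlled by the block structure on $A$ — and this matches the paper's construction at the level of scaffolding. But you have punted on exactly the step where the proof lives, and you say so yourself: step~(iii) is left as ``included under conditions relating the block indices,'' with the conditions unspecified. That is not a detail. The paper's entire argument hinges on a precise dichotomy in defining the $KKU$ edges: for $X,Y\in K$ with $|X\cap Y|\ge k-4\epsilon k$, a triple $\{X,Y,u\}$ with $u\in U_i$ is an edge only when $i\in X\cap Y$; for $|X\cap Y|<k-4\epsilon k$ it is an edge only when $i\in X\cup Y$. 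The small-intersection branch is what gives every $KK$ pair co-degree $\approx 2N/5$; the large-intersection branch is what makes the $S(7)$-freeness argument close, because the paper's Claim~1d shows that in a putative Fano plane $\{A,B,C,D,E,F,G\}$ with $A,\dots,F\in K$ and $G\in U_i$, the three ``opposite'' intersections $A\cap D$, $B\cap E$, $C\cap F$ are forced to be large (size $\ge k-4\epsilon k$), so all three edges through $G$ fall into the large-intersection branch, forcing $i\in (A\cap D)\cap(B\cap E)\cap(C\cap F)\subseteq A\cap B\cap C$ and contradicting that $\{A,B,C\}$ is a Kneser edge. Without some device of this kind, an ``obvious'' rule such as always requiring $i\in X\cap Y$ leaves $KK$-pair co-degrees below $2N/5$ for pairs with small intersection, while always requiring $i\in X\cup Y$ breaks $S(7)$-freeness. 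This trade-off is the proposition, not a tuning afterthought.

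Two smaller points. First, you list $\{X,Y,b\}$ (two Kneser vertices, one $B$-vertex) among your bridge types. The paper deliberately has no $KKV$ edges, and this is load-bearing: Claim~1b, that no vertex of $V$ appears in a copy of $S(7)$, follows precisely because the only edges meeting $V$ also meet $U$, which by Claim~1a contains at most one vertex of the Fano copy. Introducing $KKV$ edges destroys that argument, and they are also unnecessary for co-degree since $KK$ pairs are already handled by the $KKU$ bridge. Second, your co-degree bookkeeping is slightly off: a pair of two vertices in $B$ has co-degree $|U|=3N/5$, not $\approx N$, since the third vertex of any edge through a $BB$ pair must lie in $A$ (there are no $KBB$ or $BBB$ edges). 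This does not hurt the final bound since $3N/5>2N/5$, but it is worth getting right when you chase down the tight constraint, which is the $KK$ pairs with large intersection.
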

\begin{proof}
Fix $t\geq2$ and $0<\epsilon \ll 1$.  Then by Lemma~\ref{newKneser} there exists $k$ large enough that if
$n = (3/2 + \epsilon)k$ then $\kneser^3_2(n,k)$ has chromatic number at least $t$.  Fix
$N \gg \binom{n}{k}$.

Partition $N$ vertices into two parts, $U$ and $V$, of size $\frac{3N}{5}$ and $\frac{2N}{5}$
respectively. Include as an edge any triple with at least
one vertex in each part.  Further partition $U$ into $n$ sets, $U_1, \dots, U_n$, each of size
$|U|/n$.  Let $H$ be the hypergraph formed by taking the disjoint union of this hypergraph with
$\kneser^3_2(n,k)$ and including the following edges.  Include any edge of type $KUV$, where
$K = V(\kneser^3_2(n,k))$.  For any $X, Y \in K$, if $|X \cap Y| < k - 4\epsilon k$ then
include every edge of the form $\{X,Y,u\}$ where $u \in U_i$ for some $i \in X \cup Y$.
If $|X \cap Y| \geq k - 4\epsilon k$ then include every edge of
the form $\{X, Y, u\}$ where $u \in U_i$ for some $i \in X \cap Y$.
Notice that $H$ has chromatic number at least $t$ and that
$V(H) = N + \binom{n}{k}$.

\medskip 

\noindent\emph{Claim 1:} The above hypergraph contains no subgraph isomorphic to $S(7)$.
\begin{proof}
First notice that the complete bipartite $3$-uniform hypergraph contains no copy of $S(7)$.  Therefore,
by considering the partition $U, V \cup K$, we can see that any copy of $S(7)$ must contain an edge
induced by $K$.  Call this edge $\{A,B,C\}$.
It also follows from Lemma~\ref{newkneserS7free} that there is no
copy of $S(7)$ completely contained in $K$.

\medskip
\noindent\emph{Claim 1a:} Any copy of $S(7)$ intersects $U$ (or $V$) in at most one vertex.
\begin{proof}
Notice that for any edge $e$ in $S(7)$, every other edge intersects $e$
in at exactly one vertex; therefore for any copy of $S(7)$ in $H$ every edge contains one of $A, B, C$.
If there were two vertices of $S(7)$ in $U$ (or in $V$) then the edge of $S(7)$ joining them would be unable
to intersect $A, B$, or $C$.
\end{proof}

\noindent\emph{Claim 1b:} Any copy of $S(7)$ contains no vertex from $V$.
\begin{proof}
Suppose for contradiction a copy of $S(7)$ contains some vertex from $V$; then by Claim~1a it
intersects $V$ in exactly one vertex.  Every vertex of $S(7)$ is contained
in three edges, but because there is at most one vertex from $U$ involved in the copy of $S(7)$
there can be only one edge that contains the vertex from $V$.
\end{proof}

Any copy of $S(7)$ must therefore have
exactly six vertices in $K$ and exactly one vertex in $U$.  Suppose they are $A, B, C, D, E, F \in K$
and $G \in U_i$.  Suppose also that the edges of $S(7)$ induced by $K$ are
\[\{A,B,C\}, \{A, E, F\}, \{C, D, E\}, \{B, D, F\}.\]

\noindent\emph{Claim 1c:} If $\{S_1, S_2, S_3\}$ is an edge in $K$ then
$|S_i \cap S_j| \leq k/2 + \epsilon k$ for all $i \neq j$.
\begin{proof}
This follows from the definition of the hypergraph on $K$:
\[
k = |S_1| \leq n - |S_2 \cap S_3| = (3/2 + \epsilon)k - |S_2 \cap S_3|\mbox{, so } |S_2 \cap S_3| \leq k/2 + \epsilon k,
\]
and the claim follows through symmetry.
\end{proof}

\noindent\emph{Claim 1d:} The following intersections all have size at least
$2k - 4\epsilon k$: $A \cap D, B \cap E, C \cap F$.

\begin{proof}
We will prove that $|A \cap D| \geq 2k - 4\epsilon k$; the rest follow through symmetry.
Because $\{B,D, F\}$ is an edge, $D \subseteq (\overline{B}\cap F) \cup (B \cap \overline{F})
\cup (\overline{B} \cap \overline{F})$.
Also, because $\{A, B, C\}$ is an edge,
$|\overline{A} \cap \overline{B}| = |\overline{A}| - |\overline{A}\cap B| \leq (k/2 + \epsilon k)
- (k/2 - \epsilon k) = 2\epsilon k$.
Similarly, because $ \{A, E, F\}$ is an edge, $|\overline{A}\cap\overline{F}| \leq 2\epsilon k$. Therefore,
\[
|D \cap \overline{A}| \leq
					|\overline{A} \cap \overline{B} \cap F| + |\overline{A} \cap B \cap \overline{F}| +
					|\overline{A} \cap \overline{B} \cap \overline{F}|
					\leq |\overline{A} \cap \overline{B}| + |\overline{A} \cap \overline{F}|
					\leq 4\epsilon k,
\]
and so $|D \cap A | \geq |D| - 4\epsilon k = k - 4\epsilon k$.
\end{proof}

It follows from Claim~1d that $S(7)$ cannot be a subgraph of $H$.  Otherwise, the edges
$\{A, D, u\}, \{B, E , u\}, \{C, F, u\}$ would all appear, and by the definition of $H$,
because the intersections mentioned in Claim~1d are large, it follows that
$i \in (A \cap D) \cap (B \cap E) \cap (C \cap F)$.
In that case, however, $A \cap B \cap C$ is not empty and so $\{A, B, C\}$ is not an edge.
\end{proof}

It remains only to compute the minimum degree of $H$.
Vertices $S_1, S_2 \in K$ have co-degree at least $\frac{k-4\epsilon k}{n}|U|$ if
$|S_1 \cap S_2| \geq k-4\epsilon k$ and at least $\frac{k+4\epsilon k}{n}|U|$ otherwise.
Vertices $u_1, u_2 \in U$ have co-degree at least $|V|$ and
vertices $v_1, v_2 \in V$ have co-degree at least $|U|$.
All other pairs of vertices have co-degree at least $|U|$ or $|V|$.
The minimum co-degree is therefore at least
\[
\min\left\{\frac{k(1-4\epsilon)}{k(3/2+\epsilon)}|U|, |U|, |V| \right\}
     = \left\{\frac{2-8\epsilon}{3+2\epsilon}\cdot\frac{3}{5}N, \frac{3}{5}N, \frac{2}{5}N  \right\}.
\]
For some choice of $\epsilon$, this is approximately $\frac{2}{5}|V(H)|$.
\end{proof}

\newtheorem{cofanoquestion}[thmctr]{Question}
\begin{cofanoquestion}
What is the co-chromatic threshold of the Fano-free hypergraphs?
It is between $2/5$ and $1/2$.
\end{cofanoquestion}

\section{Acknowledgments}
We would like to thank the referee for detailed and insightful feedback, particularly for pointing out some flaws in our original proof of Theorem~\ref{coloringthm}.  Repairing this issue led us to a much improved proof approach.

\bibliographystyle{abbrv}
\bibliography{john-refs,extra-refs}

\end{document}